\ifdefined\SIADSREVIEW
  \documentclass[10pt]{article}
\else
  \documentclass[11pt]{article}
\fi

\newif\ifsiadsreview
\ifdefined\SIADSREVIEW
  \siadsreviewtrue
\else
  \siadsreviewfalse
\fi

\usepackage[T1]{fontenc}
\usepackage[utf8]{inputenc}
\usepackage{lmodern}
\usepackage{geometry}
\usepackage{microtype}
\usepackage{amsmath,amssymb,amsthm,mathtools,mathrsfs}
\usepackage{bm}
\usepackage{booktabs}
\usepackage{graphicx}
\usepackage{enumitem}
\usepackage{float}
\usepackage{placeins}
\ifsiadsreview
  \usepackage[mathlines]{lineno}
\fi
\usepackage{tikz}
\usepackage[
  colorlinks=true,
  allcolors=blue,
  pdftitle={Sharp Cyclicity of Neutral Entry--Exit Cycles with Quadratic Grazing},
  pdfauthor={Haibo Lu},
  pdfsubject={Piecewise-smooth slow--fast dynamics and local cyclicity},
  pdfkeywords={slow-fast system, entry-exit relation, piecewise-smooth dynamical system, grazing bifurcation, cyclicity, relaxation oscillation}
]{hyperref}

\usetikzlibrary{arrows.meta,calc,decorations.markings,positioning,shapes.geometric}

\ifsiadsreview
\else
\fi
\setlist{topsep=4pt,itemsep=2pt,parsep=2pt}
\numberwithin{equation}{section}

\newtheorem{theorem}{Theorem}[section]
\newtheorem{proposition}[theorem]{Proposition}
\newtheorem{lemma}[theorem]{Lemma}
\newtheorem{corollary}[theorem]{Corollary}
\theoremstyle{definition}
\newtheorem{definition}[theorem]{Definition}
\newtheorem{hypothesis}[theorem]{Hypothesis}
\theoremstyle{remark}
\newtheorem{remark}[theorem]{Remark}

\newcommand{\R}{\mathbb R}
\newcommand{\eps}{\varepsilon}
\newcommand{\dd}{\,\mathrm d}

\newcommand{\sgn}{\operatorname{sgn}}
\newcommand{\Cyc}{\operatorname{Cyc}}
\newcommand{\siadsneedspace}[1]{%
  \ifsiadsreview
    \par\begingroup
    \dimen0=\pagegoal
    \advance\dimen0 by -\pagetotal
    \ifdim\dimen0<#1\relax
      \endgroup\newpage
    \else
      \endgroup
    \fi
  \fi
}

\title{\Large Sharp Cyclicity of Neutral Entry--Exit Cycles with Quadratic Grazing}
\author{Haibo Lu\thanks{Shanghai Institute of Technology.  Email:
\href{mailto:luhaibo1985@gmail.com}{luhaibo1985@gmail.com}.  ORCID:
\href{https://orcid.org/0009-0000-2717-5968}{0009-0000-2717-5968}.}}
\date{July 2026}

\begin{document}

\ifsiadsreview
  \linenumbers
\fi
\maketitle

\begin{abstract}
We determine how many limit cycles can bifurcate when an invariant-line
entry--exit delay and a quadratic grazing of the switching line occur in the
same planar slow--fast return circuit. We consider continuous piecewise-smooth
systems in which these mechanisms are separated by regular flow segments. For
positive \(\eps\), the physical local cyclicity is at most two when the
smooth-reference curvature
and leading grazing-mismatch coefficient have the same sign, and at most three
when their signs are opposite. Under a rank-two unfolding, the applicable
bound is attained in each sign class for every sufficiently small fixed
\(\eps>0\). To prove these statements, we derive, to every
prescribed finite order near a balanced neutral cycle, a parameter-uniform
Poincar\'e displacement expansion and the one-sided derivative estimates
required for zero counting. Its leading nonsmooth contribution is the
classical three-halves grazing term, whose coefficient is given explicitly by
contact, branch-mismatch, and global-transmission data. A polynomial family
realizes both sign cases. We also construct a qualitatively admissible quartic
extension of the cutoff Gause response class introduced by
Kooij--Zegeling.  Validated
Poincar\'e maps and interval Newton certify two explicit nonzero singular
balanced-neutral parameter points at \(\varrho=\pm1/20\), one in each sign
class.  These singular certificates are distinct from the ordinary
positive-\(\eps\) orbit illustrations.  Our
contribution is the sign-sharp, parameter-uniform composition of the classical
grazing law with an entry--exit passage at unit multiplier.
\end{abstract}

\noindent\textbf{Keywords.}
slow--fast system; entry--exit relation; piecewise-smooth dynamical system;
grazing bifurcation; cyclicity; relaxation oscillation.

\medskip
\noindent\textbf{MSC 2020.}
Primary 34C23, 34C07; Secondary 34C26, 34E15, 34A36.

\section{Introduction}

Entry--exit delay and grazing are individually familiar singular mechanisms.
Their interaction at a neutral return is not.  The difficulty is already
visible in the scalar displacement family
\[
 F_{a,b}(q)=a+bq+cq^2+dq_+^{3/2},
\tag{1.1}
\]
with \(q_+=\max\{q,0\}\).  Here \(a\) and \(b\) are the two unfolding
coordinates, while \(c\ne0\) and \(d\ne0\) are fixed coefficients of the
base circuit.  A zero of \(F_{a,b}\) is a fixed point of the selected local
first-return map and therefore represents a local limit cycle.  On the
negative side the displacement is ordinarily smooth.  On the positive side
it is naturally smooth only after the ramification \(q=s^2\).  At \(a=b=0\),
\[
 F_{0,0}''(q)=2c\quad(q<0),
 \qquad
 \left.\frac{\dd}{\dd s}F_{0,0}'(s^2)\right|_{s=0^+}
 =\frac32d.
\tag{1.2}
\]
Thus the two sides have different fold scales.  If \(c\) and \(d\) have the
same sign, their one-sided monotonicities are compatible; if their signs are
opposite, the two fold mechanisms coexist and one additional zero can occur.

\begin{quote}
\textbf{Informal main theorem.}
Suppose a separated entry--exit/grazing return circuit closes on its reference
orbit with multiplier one.  Let \(c\ne0\) be its smooth quadratic return
curvature and \(d\ne0\) the coefficient of the leading continuous-grazing
correction in \((1.1)\).  Its local cyclicity is at most two when \(cd>0\) and
at most three when \(cd<0\).  Under the rank-two unfolding condition, the
corresponding upper bound is attained by physical limit cycles for every
sufficiently small positive slow parameter.
\end{quote}

Here \emph{physical} means \(\eps>0\): the counted zeros correspond to
genuine limit cycles whose orbits follow the actual piecewise field through
the grazing box, rather than the smooth reference continuation used to
isolate the grazing correction.  The mechanisms are \emph{separated} in
the geometric sense: the entry--exit and grazing passages lie in disjoint
local boxes and are connected by regular flow tubes.  This separation still
allows the two singular effects to interact in the same first-return map.

The purpose of the paper is to derive (1.1), with the parameter and derivative
control needed for this zero count, from the genuine slow--fast geometry in
Figure~\ref{fig:geometry}.

The figure should be read by mathematical role.  Panel~(a) supplies the
smooth entry--exit passage that enters the reference return; its composition
with the regular tubes contributes to the smooth curvature \(c\).
Panel~(b) compares the actual crossing orbit with the minus-field reference
continuation and produces the one-sided correction \(dq_+^{3/2}\).  The
coefficient \(d\) also contains the regular transmission from the grazing box
back to the return section.

\ifsiadsreview
  \begin{figure}[p]
\else
  \begin{figure}[!b]
\fi
\centering
\begin{tikzpicture}[>=Latex,font=\small,
 section/.style={very thick,gray!65},
 singular/.style={gray!70,densely dashed,thick},
 physical/.style={violet!85!black,very thick},
 normalarrow/.style={->,thin,gray!85!black}]

 \node[anchor=west] at (-6.55,7.05)
   {(a) entry--exit passage in \((u,v)\)};
 \draw[blue!65!black,very thick] (-3.90,6.62)--(-3.90,5.34);
 \draw[red!70!black,very thick,dashed] (-3.90,5.16)--(-3.90,3.88);
 \node[anchor=east,blue!65!black] at (-4.16,5.93)
   {attracting \(S_0^a\)};
 \node[anchor=east,red!70!black] at (-4.16,4.57)
   {repelling \(S_0^r\)};
 \fill (-3.90,5.25) circle (1.25pt);
 \node[anchor=east] at (-4.16,5.25)
   {stability exchange \(v=0\)};

 \draw[blue!65!black,thick,->] (-3.90,6.16)--(-3.90,5.66);
 \draw[red!70!black,thick,->] (-3.90,4.84)--(-3.90,4.34);

 \draw[section,densely dashed] (5.20,3.76)--(5.20,6.72);
 \draw[section] (5.20,6.10)--(5.20,6.64);
 \draw[section] (5.20,3.86)--(5.20,4.40);
 \node[above=2pt] at (5.20,6.72) {\(u=u_1\)};
 \node[right=4pt] at (5.20,6.58) {\(v\in V_0\)};
 \node[right=4pt] at (5.20,3.90) {\(v\in V_1\)};

 \fill (-3.90,6.34) circle (1.15pt)
   node[above left=2pt] {\(\widetilde v_0^*\)};
 \fill (-3.90,4.16) circle (1.15pt)
   node[below left=2pt] {\(\widetilde v_1^*\)};
 \fill (5.20,6.24) circle (1.15pt)
   node[right=3pt,yshift=-8pt] {\(v_0^*\)};
 \fill (5.20,4.24) circle (1.15pt)
   node[right=3pt,yshift=8pt] {\(v_1^*\)};
 \draw[singular,postaction={decorate},
   decoration={markings,mark=at position .54 with {\arrow{>}}}]
   (5.20,6.24)
   .. controls (2.50,6.22) and (-1.50,6.29) .. (-3.90,6.34);
 \draw[singular,postaction={decorate},
   decoration={markings,mark=at position .54 with {\arrow{>}}}]
   (-3.90,4.16)
   .. controls (-1.45,4.10) and (2.55,4.22) .. (5.20,4.24);

 \draw[normalarrow] (-1.30,5.82)--(-3.22,5.82);
 \node[anchor=west] at (-1.15,5.82) {normal attraction};
 \draw[normalarrow] (-3.22,4.68)--(-1.30,4.68);
 \node[anchor=west] at (-1.15,4.68) {normal repulsion};

 \draw[physical]
   (5.20,6.48)
   .. controls (2.70,6.48) and (-2.75,6.45) .. (-3.55,6.10)
   .. controls (-3.68,5.77) and (-3.68,4.58) .. (-3.42,4.34)
   .. controls (-2.80,4.07) and (2.45,4.04) .. (5.20,4.05);
 \draw[physical,->] (0.95,6.45)--(-0.15,6.43);
 \draw[physical,->] (-3.59,5.50)--(-3.59,4.98);
 \draw[physical,->] (-0.95,4.10)--(0.15,4.08);

 \node[anchor=west] at (-6.55,3.18)
   {(b) quadratic grazing in coordinates straightening \(X_p^-\), \(\beta(p)>0\)};
 \node[font=\scriptsize,text=violet!85!black] at (0,2.87)
   {violet physical orbit: \(X_p^-\longrightarrow X_p^+
      \longrightarrow X_p^-\)};
 \begin{scope}[shift={(0,0.62)},xscale=2.45,yscale=1.45]
  \fill[blue!3] (-2.15,-0.82) rectangle (2.15,1.18);
  \fill[orange!10]
    (-1.98,1.18)--(1.98,1.18)
    plot[smooth,domain=1.98:-1.98,samples=90] (\x,{0.30*\x*\x})--cycle;
  \draw[thick]
    plot[smooth,domain=-1.98:1.98,samples=90] (\x,{0.30*\x*\x});
  \node[anchor=east] at (-1.56,0.90) {\(\Sigma_p\)};
  \draw[thin] (-1.52,0.84)--(-1.40,{0.30*1.40*1.40});
  \node[font=\scriptsize,fill=orange!10,inner sep=1pt] at (0,1.04)
    {plus side \(h_p>0\): physical field \(X_p^+\)};
  \node[font=\scriptsize,fill=blue!3,inner sep=1pt] at (0,-0.67)
    {minus side \(h_p<0\): physical field \(X_p^-\)};

  \draw[section] (-2.15,-0.82)--(-2.15,1.05);
  \draw[section] (2.15,-0.82)--(2.15,1.05);
  \node[above left=2pt] at (-2.15,1.05) {\(\Gamma_{\rm in}\)};
  \node[above right=2pt] at (2.15,1.05) {\(\Gamma_{\rm out}\)};

  \draw[blue!65!black,thick,dash dot] (-2.15,-0.43)--(2.15,-0.43);
  \draw[blue!65!black,thick,dash dot,->] (1.55,-0.43)--(2.12,-0.43);
  \node[left=3pt,blue!65!black,font=\scriptsize] at (-2.15,-0.43)
    {\(I=q_-<0\) (no hit)};

  \draw[black,thick] (-2.15,0)--(2.15,0);
  \draw[black,thick,->] (1.55,0)--(2.12,0);
  \node[left=3pt,font=\scriptsize] at (-2.15,0) {\(I=0\) (tangent)};
  \fill (0,0) circle (1.15pt);
  \node[below=4pt] at (0,0) {\(z_p\)};

  \draw[singular] (-2.15,0.36)--(2.15,0.36);
  \draw[singular,->] (1.48,0.36)--(2.10,0.36);
  \node[left=3pt,font=\scriptsize] at (-2.15,0.36)
    {\(I=q>0\) (crossing)};
  \coordinate (ein) at ({-sqrt(0.36/0.30)},0.36);
  \coordinate (eref) at ({sqrt(0.36/0.30)},0.36);
  \coordinate (ephys) at ({sqrt(0.56/0.30)},0.56);
  \draw[physical] (-2.15,0.36)--(ein);
  \draw[physical,postaction={decorate},
    decoration={markings,mark=at position .55 with {\arrow{>}}}]
    (ein) .. controls (-0.60,0.36) and (0.88,0.56) .. (ephys);
  \draw[physical] (ephys)--(2.15,0.56);
  \draw[physical,->] (-1.90,0.36)--(-1.48,0.36);
  \draw[physical,->] (1.60,0.56)--(2.08,0.56);

  \fill[violet!85!black] (ein) circle (1.1pt);
  \node[above left=2pt,text=violet!85!black] at (ein)
    {\(e_{\rm in}\)};
  \draw[gray!75,fill=white,thick] (eref) circle (1.15pt);
  \node[below=3pt,gray!75!black] at (eref)
    {\(e_{\rm ref}\)};
  \fill[violet!85!black] (ephys) circle (1.1pt);
  \node[above=3pt,text=violet!85!black] at (ephys)
    {\(e_{\rm phys}\)};

  \draw[violet!85!black,thick,->] (2.15,0.36)--(2.15,0.56);
  \node[right=3pt,text=violet!85!black] at (2.15,0.46)
    {\(\mathcal D(q)>0\)};

  \draw[->] (-1.90,-1.17)--(-1.22,-1.17) node[right] {\(t\)};
  \draw[->] (-1.90,-1.17)--(-1.90,-0.70) node[above] {\(I\)};
 \end{scope}
\end{tikzpicture}
\caption{Schematic, not-to-scale geometry; no numerical trajectories are
used.  Panel (a) shows the singular entry--exit passage and its
positive-\(\eps\) shadow.  In (b), \(X_p^-=\partial_t\) and
\(h_p(t,I)=I-\kappa(p)t^2/2+\text{h.o.t.}\); hence the orange upper region is
the \(X_p^+\) side and the blue lower region is the \(X_p^-\) side.  The
\(q\)-labels denote incoming
\(I\)-levels, not regions: \(q_-<0\) misses \(\Sigma_p\), \(I=0\) is tangent,
and \(q>0\) crosses twice.  Its gray dashed line is the artificial \(X_p^-\)
reference.  For \(\beta(p)>0\), the violet solid physical orbit follows
\(X_p^-\to X_p^+\to X_p^-\) and exits at
\(I=q+\mathcal D(q)\), with \(\mathcal D(q)>0\) of order \(q^{3/2}\).}
\label{fig:geometry}
\end{figure}

The selected first-return circuit has the fixed oriented itinerary below;
the separation hypothesis makes the entire post-grazing leg regular:
\[
\begin{aligned}
 \Pi&\longrightarrow \text{entry--exit block}
 \longrightarrow \text{regular tube}
 \longrightarrow \text{grazing block}\\
 &\longrightarrow \text{regular return tube}\longrightarrow\Pi .
\end{aligned}
\tag{IT}\label{eq:itinerary}
\]

\paragraph{The scalar mechanism.}
Figure~\ref{fig:germs} plots explicit analytic toy functions from (1.1).
The curves are formula-generated illustrations, not ODE data or a numerical
premise.  The proof uses uniform one-sided curvature estimates and Rolle's
theorem.

\begin{figure}[!ht]
\centering
\begin{minipage}[t]{0.34\textwidth}
\centering
\begin{tikzpicture}[x=33cm,y=225cm,>=Latex,
                    line cap=round,line join=round]
 \draw[->] (-0.078,0)--(0.052,0) node[right] {\(q\)};
 \draw[->] (0,-0.0039)--(0,0.0108) node[above] {\(F\)};
 \begin{scope}
  \clip (-0.075,-0.0036) rectangle (0.048,0.0105);
  \draw[very thick,blue!70!black]
    plot[smooth,domain=-0.075:0,samples=90]
      (\x,{-0.003+\x*\x});
  \draw[very thick,blue!70!black]
    plot[smooth,domain=0:0.048,samples=90]
      (\x,{-0.003+\x*\x+\x*sqrt(\x)});
 \end{scope}
 \fill[blue!70!black] (-0.0547723,0) circle (1.2pt)
   node[above left=1pt,font=\scriptsize] {\(q_1\)};
 \fill[blue!70!black] (0.0190817,0) circle (1.2pt)
   node[above right=1pt,font=\scriptsize] {\(q_2\)};
 \node[diamond,draw=blue!70!black,fill=white,inner sep=1.1pt]
   at (0,-0.003) {};
\end{tikzpicture}

\smallskip
{\scriptsize\textbf{(a)} \(cd>0\): two local zeros.}
\end{minipage}\hfill
\begin{minipage}[t]{0.62\textwidth}
\centering
\begin{tikzpicture}[x=10.0cm,y=82cm,>=Latex,
                    line cap=round,line join=round]
 \fill[gray!10] (-0.155,-0.028) rectangle (0.12,0.016);
 \draw[densely dashed,gray!70] (-0.155,-0.0275) rectangle (0.12,0.0155);
 \node[font=\scriptsize,align=center,gray!70!black]
   at (-0.0175,0.0125) {local window};
 \draw[->] (-0.17,0)--(0.70,0) node[right] {\(q\)};
 \draw[->] (0,-0.029)--(0,0.017) node[above] {\(F\)};
 \begin{scope}
  \clip (-0.16,-0.028) rectangle (0.68,0.016);
  \draw[very thick,red!75!black]
    plot[smooth,domain=-0.155:-0.08,samples=70]
      (\x,{0.0032+0.16*\x+\x*\x});
  \draw[very thick,red!75!black]
    plot[smooth,domain=-0.08:0,samples=70]
      (\x,{0.0032+0.16*\x+\x*\x});
  \draw[very thick,red!75!black]
    plot[smooth,domain=0:0.0165839162,samples=40]
      (\x,{0.0032+0.16*\x+\x*\x-\x*sqrt(\x)});
  \draw[very thick,red!75!black]
    plot[smooth,domain=0.0165839162:0.12,samples=80]
      (\x,{0.0032+0.16*\x+\x*\x-\x*sqrt(\x)});
  \draw[very thick,gray!65,densely dashed]
    plot[smooth,domain=0.12:0.3859160838,samples=110]
      (\x,{0.0032+0.16*\x+\x*\x-\x*sqrt(\x)});
  \draw[very thick,gray!65,densely dashed]
    plot[smooth,domain=0.3859160838:0.68,samples=120]
      (\x,{0.0032+0.16*\x+\x*\x-\x*sqrt(\x)});
 \end{scope}
 \fill[red!75!black] (-0.136569,0) circle (1.0pt)
   node[above left=1pt,font=\scriptsize] {\(q_1\)};
 \fill[red!75!black] (-0.0234315,0) circle (1.0pt)
   node[below left=1pt,font=\scriptsize] {\(q_2\)};
 \fill[red!75!black] (0.0778035,0) circle (1.0pt)
   node[above right=1pt,font=\scriptsize] {\(q_3\)};
 \draw[gray!70!black,fill=white,line width=.6pt]
   (0.62624947,0) circle (1.5pt)
   node[above right=1pt,font=\scriptsize] {\(q_4\)};
 \node[font=\scriptsize,align=center,text=gray!70!black]
   at (0.43,0.0060) {toy-function zero only\\not a claimed cycle};
 \node[diamond,draw=red!75!black,fill=white,inner sep=1.0pt]
   at (-0.08,-0.0032) {};
 \node[diamond,draw=red!75!black,fill=white,inner sep=1.0pt]
   at (0.0165839162,0.0039928001) {};
 \node[diamond,draw=gray!65,fill=white,inner sep=1.0pt]
   at (0.3859160838,-0.0258615501) {};
\end{tikzpicture}

\smallskip
{\scriptsize\textbf{(b)} \(cd<0\): three local zeros; \(q_4\) is outside the theorem domain.}
\end{minipage}
\caption{Plots of the explicit analytic scalar toy functions
\(F_{a,b}(q)=a+bq+q^2+dq_+^{3/2}\).  In (a),
\((a,b,d)=(-3/1000,0,1)\) gives two zeros.  In (b),
\((a,b,d)=(2/625,4/25,-1)\) gives three filled zeros in the shaded
local window.  The gray dashed exterior branch and its open point
\(q_4\approx0.626249\) lie outside the local theorem domain; no periodic
orbit is claimed there.  The decimals approximate algebraic roots, the window
is illustrative rather than a certified theorem radius, and diamonds mark
turning points.  Thus the opposite-sign local three-zero bound is sharp but
not global.}
\label{fig:germs}
\end{figure}

\paragraph{A concrete realization.}
The scalar picture is not imposed by hand.  Section~7 verifies the
piecewise-polynomial family
\[
 X^-_{\eps,\mu}(x,y)=
 \begin{pmatrix}\eps-m_\mu(x)y^2\\ m_\mu(x)xy\end{pmatrix},
 \qquad
 X^+_{\eps,\mu}=X^-_{\eps,\mu}
   +\beta(y-1)\begin{pmatrix}0\\1\end{pmatrix},
\tag{1.3}
\]
where
\[
 m_\mu(x)=1+\alpha x(1-x^2)(x^2-3/7)
 +\mu_0x+\mu_1(x^3-3x/5).
\]
Here \(\mu=(\mu_0,\mu_1)\), the switching function is \(h=y-1\), and the
physical system uses \(X^-\) for \(y\le1\) and \(X^+\) for \(y\ge1\).
The line \(y=0\) is invariant and becomes the critical line at \(\eps=0\);
the base fast arc is the upper unit semicircle, tangent to \(y=1\) at the
grazing point \((0,1)\).  Section~7 computes the two unfolding directions as
\[
 \partial_{\mu_0}(a,b)(0)
   =\left(-\frac23,-\frac43-\frac{16\alpha}{21}\right),
 \qquad
 \partial_{\mu_1}(a,b)(0)=\left(0,-\frac45\right).
\]
Thus \(\mu_0\) moves the balance, or constant, coefficient (and also the
linear coefficient), whereas \(\mu_1\) preserves balance to first order and
changes the linear, or multiplier, coefficient.  These directions are
independent.
With suitable separated sections it has
\[
 c=\frac{8\alpha}{7},\qquad
 d=\frac{4\sqrt2}{3}\beta,\qquad
 \det D_\mu(a,b)(0)=\frac8{15}.
\tag{1.4}
\]
Thus the hypotheses are simultaneously realizable and the two signs can be
chosen independently.  After the rank-two perturbation by \(\mu\),
\(\alpha\beta>0\) realizes the sharp two-cycle sign class, whereas
\(\alpha\beta<0\) realizes the sharp three-cycle sign class.  The full
verification is postponed so that the general geometry can be stated in
intrinsic section-map language.

\FloatBarrier

\paragraph{Previous work and the theorem boundary.}
The closest theorem-level predecessor is Huang--Huzak--Yao (HHY)
\cite[Theorem~2.3]{HuangHuzakYao2026}.  Their part~(I) permits the tangent
label \(x_0=1\) only when \(\lambda(1)\ne0\), while the higher-cyclicity
parts~(II)--(III) assume \(x_0\ne1\).  The tangent-neutral interface is
therefore outside that theorem.  We treat it under separated
entry--exit/grazing hypotheses, which are not automatic for every HHY model.

General entry--exit criteria are developed in \cite{AiSadhu2020,AiYi2024};
common cycles, balanced canards, and small-death cycles are treated in
\cite{DeMaesschalckDumortierRoussarie2011,Dumortier2011SlowDivergence,
Huzak2018SmallDeath,Hsu2019,YaoHuangHuzak2024}.  Piecewise predator--prey
precedents include the Holling-I focus problem of Zegeling--Kooij
\cite{ZegelingKooij2020}, the concrete two-cycle model of Li--Wang--Wu
\cite{LiWangWu2021}, and Zegeling's regular--singular Li\'enard analysis with
a cutoff Gause application \cite{Zegeling2024}.  Respectively, their focus,
model-specific, and degenerate-canard mechanisms do not combine an
invariant-line delay, separated continuous quadratic grazing, and a neutral
return.

Recent piecewise-smooth biological models also produce three cycles or
cyclicity three: Zhang--Qiu--Cai--Shen obtain multiple relaxation cycles from
fold/corner geometry and canard or superexplosion mechanisms
\cite{ZhangQiuCaiShen2025}; Chen--Li--Tang use singular Hopf, canard, and
double-headed-canard mechanisms in a Leslie--Gower model
\cite{ChenLiTang2026}; and Zhu--Liu study crossing cycles around a boundary
focus with additive Allee effect \cite{ZhuLiu2026}.  Consequently, the mere
occurrence of three cycles in a piecewise-smooth predator--prey model is not
our novelty claim.  Those organizers differ from the present neutral
invariant-line entry--exit circuit with a quadratically tangent fast return;
our contribution is the sign-sharp local classification and rank-two
unfolding of that specific composite germ.

De Maesschalck--Schecter prove smooth dependence of the exact invariant-axis
entry--exit map \cite{DeMaesschalckSchecter2016}; Hsu supplies the
nonvertical layer geometry used here \cite{Hsu2019}, and Wang--Zhang analyze
smooth degenerate turning points \cite{WangZhang2018}.  These settings have
no moving switching threshold or itinerary-dependent \(q_+^{3/2}\)
correction.  Our passage retains both endpoint maps and the incoming balance
label; parameter dependence alone is not claimed as new.

The continuous quadratic-grazing \(3/2\) law is classical
\cite{DankowiczNordmark2000,diBernardoBuddChampneys2001}, as are
one-dimensional maps with such terms \cite{HalseHomerdiBernardo2003}.
Roberts treats canards generated by critical-manifold corners
\cite{Roberts2016}; recent work treats other grazing, sliding, higher-contact,
and regularized configurations
\cite{FangChen2025,ChenFangLi2026Arbitrary,ChenFangLi2026Symmetric,
DeMaesschalckHuzakPerez2026,HuangHuzakPerezYao2026}.  These mechanisms do not
supply the separated composition used here.

In the broader finite-cyclicity tradition, Mourtada treats generic hyperbolic
polycycles through normalized Dulac transitions \cite{Mourtada1991Algorithm},
the program of Dumortier, Roussarie, and Rousseau organizes quadratic
Hilbert-sixteenth finiteness through analytic graphics
\cite{DumortierRoussarieRousseau1994}, and Dumortier, Ilyashenko, and Rousseau
develop saddle-node normal forms for families of graphics
\cite{DumortierIlyashenkoRousseau2002}.  These are framework and
methodological background, not results subsuming our continuous
piecewise-smooth slow--fast class.  Their prepared Dulac/analytic
transitions are not the one-sided moving-threshold Puiseux germ derived
here, and we prove no formal embedding or transfer theorem between the two
settings.  Our theorem neither settles a DRR quadratic graphic nor supplies
a bound for \(H(2)\).

Once the ramified return germ is available, its scalar zero count is
elementary and is not claimed as new in isolation.  To the best of our
knowledge, the new contribution is the geometric derivation of that germ,
parameter-uniform derivative control, physical realization, and sign-sharp
positive-\(\eps\) cyclicity for the separated entry--exit/grazing circuit at
unit multiplier.

\paragraph{Contributions.}
The paper proves four linked results.
\begin{enumerate}
\item We establish the finite-order, parameter-uniform physical passage in
the Hsu/HHY nonvertical-fiber form, with explicit source and target spaces for
the endpoint-footpoint maps and a free incoming balance label.
\item In the exact moving penetration coordinate, we compose this passage
with the continuous-grazing transition and regular tubes, obtaining a
derivative-controlled ramified return germ with an explicit geometric
coefficient.
\item Uniform ordinary and ramified curvature estimates give the sharp bound
\[
 \Cyc\le
 \begin{cases}
 2,&cd>0,\\
 3,&cd<0,
 \end{cases}
\tag{1.5}
\]
and a rank-two unfolding attains the corresponding bound in each sign class
for every sufficiently small fixed positive \(\eps\).
\item An explicit polynomial benchmark realizes both sign cases.  A
Gause/Holling-type application starts from the equations and cutoff-response
class introduced by Kooij--Zegeling \cite{KooijZegeling2019}; the particular
base member selected here, its quartic response coefficient \(\varrho\), and
the codimension-two parameter choice are explicitly new modelling assumptions.
We determine \(\sgn(cd)=-\sgn\varrho\), transfer the
biological unfolding to the canonical return coefficients, and validate
explicit nonzero singular parameter points at \(\varrho=\pm1/20\).
\end{enumerate}

\paragraph{Proof architecture and evidence.}
Sections~3--6 follow the logical chain from the two local passages, through
their moving-coordinate composition, to the one-sided curvature estimates
and sharp zero count.  The rank-two hypothesis (UNF) is used only for
attainment.  The polynomial benchmark, smooth-reference curvature transfer,
and Gause realization then verify joint realizability and the model-specific
hypotheses.  The general cyclicity statements are analytic.  Two independent
interval backends certify the base-point signs; validated \(C^1\)
Poincar\'e maps and interval Newton certify the two explicit Gause parameter
points.
The positive-\(\eps\) biological return plots use ordinary floating-point
integration and are evidence rather than proof.

The theorem is not a recognition result for an arbitrary continuous
piecewise-smooth system.  It assumes an exact invariant-line entry--exit
block, separated singular boxes, continuous quadratic grazing, and regular
global tubes.  It gives a local count in a selected return neighborhood, not
a global limit-cycle bound or an empirical ecological calibration.

\section{Separated tangent entry--exit configurations}

Let \(p=(\eps,\mu)\), where
\[
 \eps\in[0,\eps_0),\qquad \mu\in M\subset\R^2,
\]
and \(M\) is a neighborhood of \(0\).  All neighborhoods below may be
shrunk simultaneously.  The branch fields, sections, and coordinate
changes are assumed \(C^\infty\) jointly in their displayed variables.
Only finite orders are used.  Throughout,
\(q_+=\max\{q,0\}\).

The hypotheses are grouped by geometric role rather than presented as an
undifferentiated list.  The labels (EE), (GR), (REG), (BAL), and (UNF) will
also identify the proof module that uses each assumption.

\subsection{Named geometric hypotheses}

\begin{hypothesis}[(EE) Invariant-line entry--exit block]
\label{hyp:ee}

After a possible common reversal of time, the field in the entry--exit box
has the form
\[
 \dot u=u f(u,v,\eps,\mu),\qquad
 \dot v=\eps g(u,v,\eps,\mu)+u h_0(u,v,\eps,\mu).
\tag{2.1}
\]
There are \(v_-<0<v_+\) and \(\gamma>0\) such that, uniformly for small
\(\mu\),
\[
\begin{aligned}
 g(0,v,0,\mu)&\le-\gamma,
       &&v_-\le v\le v_+,\\
 f(0,v,0,\mu)&>0,&&v_- \le v<0,\\
 f(0,v,0,\mu)&<0,&&0<v\le v_+ .
\end{aligned}
\tag{2.2}
\]
The last two signs are uniform on compact subintervals away from \(0\).
Choose
\[
 \widetilde v_0^*>0>\widetilde v_1^*,\qquad
 \int_{\widetilde v_0^*}^{\widetilde v_1^*}
 \frac{f(0,v,0,0)}{g(0,v,0,0)}\,\dd v=0.
\tag{2.3}
\]
Writing \(F(v,\mu)=f(0,v,0,\mu)/g(0,v,0,\mu)\), choose disjoint
critical-line neighborhoods
\(U_0^{\rm c}\ni\widetilde v_0^*\) and
\(U_1^{\rm c}\ni\widetilde v_1^*\).  For \(\mu\) near \(0\), let
\(\mathcal B_\mu:U_0^{\rm c}\to U_1^{\rm c}\) be the unique
opposite-endpoint germ
satisfying
\[
 \int_{\zeta}^{\mathcal B_\mu(\zeta)}F(v,\mu)\,\dd v=0.
\tag{2.4}
\]
It is normalized by
\(\mathcal B_0(\widetilde v_0^*)=\widetilde v_1^*\).  The disjoint source
and target neighborhoods exclude the trivial solution
\(\mathcal B_\mu(\zeta)=\zeta\).
Fix \(u_1>0\) small.  The layer fibers at the two endpoints are
\[
 \partial_u\mathscr V_i
 =\frac{h_0(u,\mathscr V_i,0,\mu)}
        {f(u,\mathscr V_i,0,\mu)},\qquad
 \mathscr V_i(0;\zeta,\mu)=\zeta,\qquad
 \Lambda_i(\zeta,\mu)=\mathscr V_i(u_1;\zeta,\mu).
\tag{2.5}
\]
Put \(v_i^*=\Lambda_i(\widetilde v_i^*,0)\), and choose disjoint
physical-section neighborhoods \(V_i\ni v_i^*\).  The
parameter-preserving maps
\[
 \boldsymbol\Lambda_i:(\zeta,\mu)\longmapsto
   (\Lambda_i(\zeta,\mu),\mu),\qquad
 \boldsymbol\Lambda_i^{-1}:(v,\mu)\longmapsto
   (\pi_i(v,\mu),\mu).
\]
are mutually inverse local-diffeomorphism germs at
\((\widetilde v_i^*,0)\) and \((v_i^*,0)\), respectively.  After shrinking
and relabelling \(M\), take fixed \(V_i\) so that \(V_i\times M\) lies in
one image representative of \(\boldsymbol\Lambda_i\) and
\(\pi_i(V_i,\mu)\Subset U_i^{\rm c}\).  Shrink \(V_0\) once more so that
the balance-map output lies in the domain of
\(\Lambda_1(\cdot,\mu)\) and the composite lands in \(V_1\), uniformly in
the retained parameters.  No identity
\(\Lambda_i(U_i^{\rm c},\mu)=V_i\) is asserted.  On these common inner
representatives, the singular physical transition is a germ
\(V_0\to V_1\):
\[
 \mathcal T_{\rm ee}^0(\cdot,\mu)
 =\Lambda_1(\cdot,\mu)\circ\mathcal B_\mu
   \circ\pi_0(\cdot,\mu)
 =\pi_1(\cdot,\mu)^{-1}\circ\mathcal B_\mu
   \circ\pi_0(\cdot,\mu).
\tag{2.6}
\]
In particular, it is generally not the bare balance map.
\end{hypothesis}

\begin{hypothesis}[(GR) Continuous quadratic grazing]
\label{hyp:gr}

In a disjoint box let \(h_p\) be a switching function.  The physical field
is \(X_p^-\) on \(h_p\le0\) and \(X_p^+\) on \(h_p\ge0\), where
\(X_p^\pm\) are smooth extensions satisfying
\[
 X_p^+=X_p^-\quad\text{on }\Sigma_p=\{h_p=0\}.
\tag{2.7}
\]
There is a smooth family \(z_p\in\Sigma_p\) for which
\[
\begin{gathered}
 \|d_zh_p(z_p)\|\ge\nu_0>0,\qquad
 X_p^-(z_p)\ne0,\qquad X_p^-h_p(z_p)=0,\\
 \kappa(p):=-(X_p^-)^2h_p(z_p)\ge\kappa_0>0.
\end{gathered}
\tag{2.8}
\]
Hadamard factorization gives \(X_p^+-X_p^-=h_pA_p\).  Put
\[
 \beta(p)=d_zh_p\bigl(A_p(z_p)\bigr),\qquad \beta(0)\ne0.
\tag{2.9}
\]

Let \(I_p\) be the local first integral of \(X_p^-\), normalized by
\[
 X_p^-I_p=0,\qquad I_p(z_p)=0,\qquad dI_p(z_p)=dh_p(z_p).
\tag{2.10}
\]
\end{hypothesis}

\begin{hypothesis}[(REG) Separated regular return geometry]
\label{hyp:reg}
Choose a local return section \(\Pi\), with coordinate \(\eta=0\) on the
base itinerary, so that the cyclic order is \eqref{eq:itinerary}.  Every
piece outside the two singular boxes is a uniformly regular flow tube: the
field is nonzero, flight times are bounded, and all section transversality
constants are bounded away from zero.  Every nearby orbit meets \(\Pi\)
exactly once per selected circuit.

Pulling \(I_p\) back to \(\Pi\) through the preceding modules defines
\[
 Q_p(\eta)=\rho(\eta,p),\qquad
 \rho(0,0)=0,\qquad \partial_\eta\rho(0,0)\ne0.
\tag{2.11}
\]
Thus \(q=Q_p(\eta)\) is the exact signed penetration, and \(q=0\) is the
moving grazing graph.  For \(\eps>0\), let \(P_p^{\rm phys}\) be the
physical Poincar\'e return and let \(P_p^{{\rm ref},{\rm phys}}\) be the
corresponding return obtained by using \(X_p^-\) throughout the grazing
box.  Theorem~\ref{thm:entryexit} below proves the parameter-uniform
entry--exit extension; composition with the regular connecting modules then
extends these returns to \(\eps=0\).  This extension is a conclusion of that
theorem, not an additional part of \textup{(REG)}.  Denote the resulting
families on \([0,\eps_0)\times M\) by \(\mathcal R_p\) and
\(\mathcal R_p^{\rm ref}\).  Thus
\(\mathcal R_p=P_p^{\rm phys}\) and
\(\mathcal R_p^{\rm ref}=P_p^{{\rm ref},{\rm phys}}\) for \(\eps>0\),
whereas their \(\eps=0\) members are singular parameter extensions, not
finite-time physical Poincar\'e returns.  No closure or multiplier condition
is imposed in \textup{(REG)}.  Set
\[
\begin{aligned}
 \widehat{\mathcal R}_p&=Q_p\circ \mathcal R_p\circ Q_p^{-1},&
 \Delta(q,p)&=\widehat{\mathcal R}_p(q)-q,\\
 \widehat{\mathcal R}_p^{\rm ref}
 &=Q_p\circ \mathcal R_p^{\rm ref}\circ Q_p^{-1},&
 S(q,p)&=\widehat{\mathcal R}_p^{\rm ref}(q)-q.
\end{aligned}
\tag{2.12}
\]
Thus \(q\) is the exact moving penetration coordinate, \(S\) is the smooth
reference displacement, and \(\Delta-S\) is the physical correction created by
switching through the grazing box.
\end{hypothesis}

\begin{hypothesis}[(BAL) Simple balanced neutral return]
\label{hyp:bal}
Assume that the reference return closes at the base point and is neutral
there, with nonzero smooth curvature:
\[
 S(0,0)=S_q(0,0)=0,\qquad
 c:=\frac12S_{qq}(0,0)\ne0.
\tag{2.13}
\]
\end{hypothesis}

\begin{hypothesis}[(UNF) Rank-two unfolding]
\label{hyp:unf}
Put \(a(p)=S(0,p)\), \(b(p)=S_q(0,p)\), and impose the rank condition
\[
 \det\!\left[
 \partial_\mu(a,b)
 \right]_{(\eps,\mu)=(0,0)}\ne0.
\tag{2.14}
\]
\end{hypothesis}

\begin{definition}[Separated tangent entry--exit configuration]
\label{def:configuration}
A return circuit satisfying \textup{(EE)}, \textup{(GR)}, and
\textup{(REG)} is a \emph{separated tangent entry--exit configuration}.  It
is \emph{simple balanced neutral} when \textup{(BAL)} holds and
\emph{rank-two versal} when \textup{(UNF)} also holds.
\end{definition}

\begin{definition}[Local cyclicity in the selected family]
\label{def:cyclicity}
Let \(\Delta(q,p)\) be the displacement germ in \((2.12)\).  Its local
scalar cyclicity at \((q,p)=(0,0)\) is
\[
 \Cyc_{\rm scal}:=
 \sup\left\{N\in\mathbb N:
 \begin{array}{l}
  \text{there are }p_n\to0\text{ and distinct zeros }
  q_{n,1},\ldots,q_{n,N}\text{ of }\Delta(\cdot,p_n),\\
  \max_{1\le j\le N}|q_{n,j}|\longrightarrow0
 \end{array}\right\}.
\]
The physical local cyclicity \(\Cyc_{\rm phys}\) is defined by additionally
requiring the singular component of every \(p_n\) to satisfy \(\eps_n>0\).
The supremum of an empty admissible collection is taken to be zero.  Zeros
are counted without multiplicity.  Under \textup{(REG)}, the physical zeros
are in bijection with limit cycles in the selected return neighborhood.  The
singular \(\eps=0\) extension enters \(\Cyc_{\rm scal}\) but is not itself
interpreted as a periodic orbit.
\end{definition}

\begin{center}
\begin{tabular}{@{}lll@{}}
\toprule
Package & Geometric content & Main use\\
\midrule
(EE) & invariant line, balance, endpoint fibers & physical passage\\
(GR) & continuous quadratic contact, \(\beta\ne0\) & \(3/2\) transition\\
(REG) & separated boxes, moving \(q\), regular tubes & composition\\
(BAL) & closure, unit multiplier, \(c\ne0\) & zero count\\
(UNF) & rank two in \((a,b)\) & sharp attainment\\
\bottomrule
\end{tabular}
\end{center}

\begin{theorem}[Moving-threshold return and sharp cyclicity]
\label{thm:main}
Fix a finite integer \(\ell\ge2\).
Assume \textup{(EE)}, \textup{(GR)}, \textup{(REG)}, and \textup{(BAL)}.
\emph{(i) Return germ.}
There are a section neighborhood \(U_\ell\) and a parameter neighborhood
\(P_\ell\), which may depend on \(\ell\), such that, for
\((q,p)\in U_\ell\times P_\ell\),
\[
 \Delta(q,p)=S(q,p)+q_+^{3/2}
 K(\sqrt{q_+},q,p).
\tag{2.15}
\]
Here \(S\) is jointly \(C^{\ell+1}\) in \((q,p)\), while \(K(s,q,p)\) is
jointly \(C^\ell\) for \(s\ge0\).  Moreover,
\[
 d:=K(0,0,0)
 =L_0\frac{4\sqrt2}{3\sqrt{\kappa(0)}}\,\beta(0)\ne0,
\tag{2.16}
\]
where \(L_0\) is the derivative of the regular post-grazing transition from
the normalized outgoing \(I_0\)-coordinate to the conjugated \(q\)-output
coordinate.

\emph{(ii) Uniform upper bound.}
There are \(\delta>0\) and a parameter neighborhood \(P_0\) such that
\[
 \#\{q\in(-\delta,\delta):\Delta(q,p)=0\}
 \le
 \begin{cases}
 2,&cd>0,\\
 3,&cd<0,
 \end{cases}
\tag{2.17}
\]
for every \(p\in P_0\).

\emph{(iii) Rank-two unfolding and sharpness.}
If, in addition, \textup{(UNF)} holds, there are
\(\eps_1>0\) and a
\(C^\ell\) curve
\(\mu_*:[0,\eps_1)\to M\) such that
\[
 \mu_*(0)=0,\qquad
 \mu_*(\eps)=O(\eps),\qquad
 a(\eps,\mu_*(\eps))=b(\eps,\mu_*(\eps))=0.
\tag{2.18}
\]
The applicable bound is sharp in each sign class: for every sufficiently
small fixed \(\eps>0\), parameters arbitrarily close to \(\mu_*(\eps)\)
realize two simple fixed points when \(cd>0\) and three when \(cd<0\).  Along
the realizing paths, the fixed points tend to \(q=0\) and
\(\mu\to\mu_*(\eps)\); jointly,
\((\eps,\mu_*(\eps))\to(0,0)\) as \(\eps\downarrow0\).

\emph{(iv) Physical interpretation.}
For \(\eps>0\), the zeros counted in \((2.17)\), including the simple
zeros on the realizing paths, are fixed points of the conjugated physical
Poincar\'e return and are in bijection with distinct limit cycles in the
selected return neighborhood.  At \(\eps=0\), \((2.17)\) is a zero count
for the singular extension only; no periodic-orbit correspondence is
asserted there.  Consequently,
\[
 \Cyc_{\rm phys}\le
 \begin{cases}
  2,&cd>0,\\
  3,&cd<0,
 \end{cases}
\]
and equality holds in the indicated sign case when \textup{(UNF)} is
satisfied.
\end{theorem}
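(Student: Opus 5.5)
The plan is to prove the four parts in the order (i), (ii), (iii), (iv). The analytic ingredients are the parameter-uniform entry--exit extension (Theorem~\ref{thm:entryexit}, used as announced in (REG)), the classical continuous-grazing transition in the flow-box coordinates of (GR), and elementary Rolle-type counting.

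For (i), I will write the reference and physical returns as compositions of the same modules along \eqref{eq:itinerary}. They agree everywhere except on the grazing box. Theorem~\ref{thm:entryexit} gives the entry--exit module as a jointly \(C^{\ell+1}\) family on \([0,\eps_0)\times M\), after shrinking to \(U_\ell\times P_\ell\). The regular tubes are \(C^\infty\). Hence \(S\) is \(C^{\ell+1}\) in the exact penetration coordinate \(q\), and \(Q_p\) is a \(C^{\ell+1}\) diffeomorphism by \eqref{eq:itinerary} and (2.11).

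Inside the grazing box I straighten \(X_p^-=\partial_t\) with \(I=I_p\), so that \(h_p=I-\kappa(p)t^2/2+\cdots\). For an incoming level \(q>0\), the orbit crosses \(\Sigma_p\) at times \(t_\pm\approx\pm\sqrt{2q/\kappa}\). Variation of constants along the \(X^+\) arc, using \(X^+-X^-=h_pA_p\), gives the outgoing level
\[
I_{\rm out}=q+\int_{t_-}^{t_+}h_p\,dI(A_p)\,dt+\cdots .
\]
The leading part of the integral is \(\beta\int(q-\kappa t^2/2)\,dt=\beta\frac{4\sqrt2}{3\sqrt\kappa}q^{3/2}\).

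The corrections come from the Taylor expansion of \(h_p\), \(A_p\), and \(I_p\). After the substitution \(t=\sqrt q\,\tau\) and \(q=s^2\), they are smooth in \((s,q,p)\) with an extra factor of \(q^{3/2}\). This yields \(I_{\rm out}=q+q_+^{3/2}G(\sqrt{q_+},q,p)\) with \(G\) of class \(C^\ell\) for \(s\ge0\). The loss of one derivative comes from the ramification and from the crossing-time functions, which are defined implicitly and are \(C^\ell\) in \(s\).

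Composing with the post-grazing regular transition \(T\) gives
\[
\Delta-S=T(q+q_+^{3/2}G)-T(q),
\]
conjugated by \(Q_p\). The mean value theorem in integral form turns this difference into \(q_+^{3/2}K\), with \(K(0,0,0)=L_0\,G(0,0,0)\), which is (2.16). For \(q\le0\) the orbit never reaches \(\Sigma_p\), so \(\Delta=S\) there.

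For (ii), I will use the one-sided curvature estimates. For \(q<0\), \(\Delta_{qq}=2c+o(1)\) uniformly in \(p\), so \(\Delta\) is strictly convex or strictly concave there and has at most two zeros on \((-\delta,0]\). For \(q>0\), put \(q=s^2\) and \(\phi(s)=\Delta(s^2,p)\). Then
\[
\phi'(s)=2s\,\psi(s),\qquad \psi(s)=S_q(s^2,p)+\tfrac32 d\,s+O(s^2)+o(1)\,s ,
\]
with the remainders uniform in \(p\). This holds because \(\partial_s\) of \(q^{3/2}K\), divided by \(2s\), is \(\frac32 sK+\frac{s^2}{2}\partial_sK+\cdots\). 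It follows that \(\psi'(s)=\frac32 d+O(s)\), so \(\psi\) is strictly monotone, \(\phi\) has at most one critical point, and \(\phi\) has at most two zeros in \(s\in(0,\sqrt\delta)\).

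Combining the two sides by a raw count gives four zeros, which is too many. I therefore glue through \(\Delta_q\), which is continuous at \(q=0\) with value \(b(p)\). On the left \(\Delta_q\) is monotone with sign of slope equal to \(\sgn c\), and on the right it is monotone in \(q\) with sign of slope equal to \(\sgn d\).

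If \(cd>0\), then \(\Delta_q\) is strictly monotone on all of \((-\delta,\delta)\), so \(\Delta\) has at most two zeros by Rolle.

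If \(cd<0\), then \(\Delta_q\) has at most two zeros, so \(\Delta\) has at most three. Zeros are counted at \(q=0\) only once, which is consistent with this.

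For (iii), (BAL) and (UNF) together with the implicit function theorem applied to \((a,b)(\eps,\mu)=0\) give the curve \(\mu_*\). It is \(C^\ell\) with \(\mu_*(\eps)=O(\eps)\).

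Now fix a small \(\eps>0\). By (UNF), the map \(\mu\mapsto(a,b)\) is a local diffeomorphism near \(\mu_*(\eps)\), so \((a,b)\) can be prescribed freely near \(0\). Along the toy scalings of Figure~\ref{fig:germs} I take \((a,b)=(-\sgn c\,r^2,0)\) when \(cd>0\). This gives two simple zeros \(q\approx\pm r\sqrt{|a/c|}\) scale, one on each side.

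When \(cd<0\) I take \(b=\theta r\) and \(a=\theta' r^2\) with \(\theta,\theta'\) chosen so that the left quadratic \(a+bq+cq^2\) has two simple negative roots of size \(O(r)\). I also arrange that on the right \(\Delta\) changes sign at \(q\sim r^2\)-scale, where \(d q^{3/2}\) opposes \(bq\). Concretely, after rescaling \(q=r^2\hat q\), the dominant balance there is \(a+b r^2\hat q+d r^3\hat q^{3/2}\), which requires the tuned choice \(a\sim r^4\), \(b\sim r\). I would instead follow the toy example and pick \(a=\lambda^4 a_0\), \(b=\lambda^2 b_0\) with \(q\sim\lambda^2\). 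Both sides then scale homogeneously as \(\lambda^4\), up to errors of order \(o(\lambda^4)\), in \(q=\lambda^2\hat q\) for the quadratic part and \(\lambda^3\hat q^{3/2}\) for the \(d\)-term. I will adjust the exponents so that all three terms balance.

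Transversality of the model zeros survives the small remainders, by a uniform \(C^1\) estimate in the rescaled variable. Letting \(\lambda\to0\) gives the realizing paths.

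The main obstacle is this scaling. I must choose a homogeneous rescaling under which \(cq^2\), \(bq\), \(a\), and \(dq^{3/2}\) all contribute while the error terms of (2.15) stay uniformly smaller in \(C^1\). Since \(q^2\) and \(q^{3/2}\) are not homogeneous together, I expect to place the positive zero at scale \(\lambda^2\), where \(dq^{3/2}\) balances \(bq\) and \(cq^2\) is negligible, and the negative zeros at scale \(\lambda\), where \(cq^2\) balances \(bq\). Each zero is then verified separately in its own scale.

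Part (iv) follows from (REG) and Definition~\ref{def:cyclicity}: for \(\eps>0\), zeros of \(\Delta\) correspond to fixed points of the conjugated physical return and hence to limit cycles.
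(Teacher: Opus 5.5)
Your treatment of (i), (ii), the curve $\mu_*$, and (iv) follows the paper's route. The mean-value representation of the grazing correction through the post-grazing map produces $K$ with $K(0,0,0)=L_0\,\mathcal A_{\rm gr}(0,0)$. The upper bound comes from strict one-sided monotonicity of $\Delta_q$ (in $q$ on the left, in $s=\sqrt q$ on the right), glued through the continuous value $\Delta_q(0,p)=b(p)$ and followed by Rolle.

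The genuine gap is the opposite-sign sharpness in (iii). You leave it as ``the main obstacle,'' and the placement you finally settle on does not work. Normalize $c>0>d$. Two negative zeros at the scale $\lambda$ where $cq^2$ balances $bq$ force $b\asymp\lambda$. Since their product is $a/c$, they also force $a\asymp\lambda^2$ with $a>0$. On $q\asymp\lambda^2$ one then has $bq\asymp|d|q^{3/2}\asymp\lambda^3$, but the constant $a\asymp\lambda^2$ dominates. So $\Delta$ keeps the sign of $a$ there, and there is no zero where $dq^{3/2}$ balances $bq$. Your ``homogeneous'' variant $a\asymp\lambda^4$, $b\asymp\lambda^2$, $q\asymp\lambda^2$ is not homogeneous either, since $|d|q^{3/2}\asymp\lambda^3\gg\lambda^4$. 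No single rescaling lets all four terms contribute, so ``adjusting the exponents'' cannot close the argument as planned.

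The missing observation is where the positive zero lies. With $a\asymp b^2$, it sits on the intermediate scale $q\asymp a^{2/3}\asymp b^{4/3}$, where the constant balances the grazing term and both $bq$ and $cq^2$ are negligible. Your parameter choice $b=\theta r$, $a=\theta' r^2$ (equivalently $b=\lambda^2b_0$, $a=\lambda^4a_0$) is therefore the right one; only the location of the positive zero was wrong.

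The paper fixes $\eps>0$ and uses the coefficients $c_\eps,d_\eps$ at $p_*(\eps)$, whose signs agree with those of $c,d$ for small $\eps$. It takes $\mathsf b=t$ and $\mathsf a=t^2/(8c_\eps)$.
\begin{itemize}
\item On $q=tz<0$, the rescaled displacement converges in $C^1$ to $\tfrac1{8c_\eps}+z+c_\eps z^2$. Its discriminant is $1/2$, so there are two simple negative roots.
\item Set $q_t=(t^2/(4c_\eps|d_\eps|))^{2/3}\asymp t^{4/3}$. Then $bq_t\asymp t^{7/3}$ and $cq_t^2\asymp t^{8/3}$ are both $o(t^2)$. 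Hence $\Delta(\lambda_0q_t)/\mathsf a=1-2\lambda_0^{3/2}+o(1)>0$ for a fixed $\lambda_0<2^{-2/3}$, while $\Delta(q_t)/\mathsf a=-1+o(1)$.
\item The resulting root in $(\lambda_0q_t,q_t)$ is simple, because $\Delta_q(s^2,p_t)=t+s(\tfrac32d_\eps+o(1))$ and $s\asymp t^{2/3}\gg t$ on the bracket.
\end{itemize}
The ``tuned choice'' $b\asymp r$, $0<a\ll r^3$ that you mention and then discard also gives three zeros, with the positive one where $bq$ balances $dq^{3/2}$. In that case, however, the negative zeros sit at the two distinct scales $-a/b$ and $-b/c$ and must be verified separately.

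A smaller slip of the same kind occurs for $cd>0$. With $(a,b)=(-\sgn c\,r^2,0)$ the zeros are not symmetric: they lie near $-r/\sqrt{|c|}$ and $(r^2/|d|)^{2/3}$. This is why the paper uses the two separate rescalings $q=-A^{1/2}z$ and $q=A^{2/3}z$.
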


\begin{remark}[Finite regularity]
\label{rem:regularity}
The \(C^\infty\) assumption avoids a distracting optimization.  The
entry--exit result below gives a joint \(C^r\) physical passage from
\(C^{r+4}\) data.  To obtain a \(C^\ell\) coefficient \(K\) after smooth
postcomposition, we invoke that result with \(r=\ell+1\).  The smooth
reference displacement retains \(C^{\ell+1}\) regularity; the one
derivative loss occurs only in the coefficient \(K\).  In particular, the
curvature proof of \((2.17)\) uses a \(C^3\) passage and is covered by
\(C^7\) entry--exit data.
\end{remark}

\section{Uniform physical entry--exit passage}

We isolate the only slow singularity used in Theorem~\ref{thm:main}.
De Maesschalck--Schecter, Corollary~1.2 and Remark~1
\cite{DeMaesschalckSchecter2016}, establish smooth dependence on the input,
\(\eps\), and additional finite-dimensional parameters for the exact
invariant-axis form.  Hsu's Theorem~5.1 \cite{Hsu2019} treats the physical
passage with nonvertical layer fibers for fixed coefficients.  What is
needed here is their interface: a finite-order parameter-uniform passage in
the Hsu/HHY form, including both endpoint layer maps while retaining the
incoming endpoint as a free phase label.  Parameter dependence alone is not
presented as a new phenomenon.

\begin{theorem}[Uniform physical entry--exit passage]
\label{thm:entryexit}
Fix \(r\ge1\).  Suppose the coefficients in \((2.1)\) are \(C^{r+4}\) on
a neighborhood of the compact passage set and satisfy \((2.2)\)--\((2.5)\).
Then, after shrinking the neighborhoods, the forward local passage from the
entry component of \(u=u_1\) to its exit component is defined for
\[
 v\in V_0,\qquad 0<\eps<\eps_1,\qquad |\mu|<\mu_1.
\]
Its exit coordinate
\(\mathcal T_{\rm ee}(v,\eps,\mu)\in V_1\) extends jointly \(C^r\) to
\(\eps=0\), and its singular value is exactly \((2.6)\).  Thus
\(\mathcal T_{\rm ee}:V_0\to V_1\) in the phase variable.
The word ``passage'' refers to the first exit inside the chosen isolating
entry--exit neighborhood and makes no assertion about later global
intersections.
\end{theorem}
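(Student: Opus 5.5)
The passage will be split into three pieces matching the three factors in (2.6): an incoming layer map from the section \(u=u_1\) down to a neighborhood of the critical line, a slow passage along \(u\approx0\) that realizes the balance relation (2.4), and an outgoing layer map back to \(u=u_1\). The proof will run in the Hsu/HHY nonvertical-fiber form, with \(\mu\) carried along as an inert parameter, and will invoke the De Maesschalck--Schecter smooth-dependence result \cite{DeMaesschalckSchecter2016} (Corollary~1.2 and Remark~1) for the central piece. The only genuinely new work is to bring the system into their exact invariant-axis form uniformly in \(\mu\), and then to recompose.

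\textbf{Step 1: straighten the fibers.} I will construct the fast fibers \(\mathscr V(u;\zeta,\mu)\) from (2.5) and introduce the new coordinate \(w=\pi(u,v,\mu)\), defined as the footpoint on \(u=0\) of the layer fiber through \((u,v)\). This map is a \(C^{r+4}\) local diffeomorphism near each endpoint, because (2.5) has smooth data and \(f(0,\cdot)\ne0\) away from \(v=0\). In the coordinates \((u,w)\) the \(\eps=0\) layer equation becomes \(\dot w=0\). The system therefore takes the form
\[
 \dot u=u\tilde f(u,w,\eps,\mu),\qquad \dot w=\eps\tilde g(u,w,\eps,\mu),
\]
with \(\tilde f(0,w,\cdot)=f(0,w,\cdot)\) and \(\tilde g(0,w,0,\mu)=g(0,w,0,\mu)\). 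This is possible because the invariant line \(u=0\) is preserved and \(\pi(0,v,\mu)=v\).

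There is an obstacle: the fiber map is only defined near the two endpoints, not across \(v=0\). I will handle this by using \(w=v+O(u)\) globally, via a smooth interpolation of the two endpoint fiber charts. The \(\dot w\) equation then acquires a term \(u\,\eps\)-independent \(O(u)\cdot\)(something). Near the middle of the passage \(u\) is exponentially small, so such a term is harmless in the De Maesschalck--Schecter framework, since their result admits \(\dot w=\eps g+u h\) with general \(h\) in the invariant-axis form. Alternatively, I will apply their theorem directly to (2.1), which is already invariant-axis form, on the section \(u=u_1\). In that case Step 1 is needed only to identify the \(\eps=0\) limit with \(\Lambda_1\circ\mathcal B_\mu\circ\pi_0\).

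\textbf{Step 2: uniform passage.} Applying the cited corollary with \(\mu\) as an extra finite-dimensional parameter, and with the loss of derivatives that accounts for the \(r+4\), gives a jointly \(C^r\) passage map from \(\{u=u_1\}\) to \(\{u=u_1\}\) for \(\eps\ge0\). For \(\eps>0\) this map is defined as a genuine first exit from the isolating box. The uniformity of (2.2) in \(\mu\) keeps the isolating box and the constants fixed. The unique solution \(\mathcal B_\mu\) of (2.4) in the disjoint neighborhoods is guaranteed by the implicit function theorem, since \(F(\mathcal B_0(\zeta))\ne0\).

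\textbf{Step 3: identify the \(\eps=0\) value.} At \(\eps=0\) the orbit follows the incoming fiber to its footpoint \(\pi_0(v,\mu)\), drifts along the critical line, and leaves at the point where the accumulated exponent \(\int F\) returns to zero. It then follows the outgoing fiber to \(\Lambda_1\). This yields (2.6).

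\textbf{Main obstacle.} I expect the hardest step to be verifying that the hypotheses of the cited smooth-dependence result match the nonvertical-fiber setting with the required uniformity in \(\mu\) and the claimed derivative count, so that the singular limit is exactly the composite (2.6) rather than the bare balance map. The shrinking of \(V_0\) described in (EE) keeps the composite inside \(V_1\).
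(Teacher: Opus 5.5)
\textbf{There is a genuine gap: your Step~2 rests on a result whose hypotheses your system does not satisfy.} As this paper reads it, the De Maesschalck--Schecter corollary covers the exact invariant-axis form with vertical fibers, i.e.\ \(h_0\equiv0\) in \((2.1)\). The paper invokes it only for that subclass (see the remark after the proof), and Hsu's nonvertical-fiber result is for fixed coefficients. So your fallback ``apply their theorem directly to \((2.1)\)'' fails exactly in the case the theorem is about.

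Step~1 cannot produce the required form either. The desingularized layer direction \((f,h_0)\) equals \((0,h_0)\) at \(u=v=0\). When \(h_0(0,0,0,\mu)\ne0\), the fibers are therefore tangent to the critical line at the turning point, and no footpoint map exists near it. Even a first-order correction \(w=v+u\varphi\) would need \(\varphi(0,v)=-h_0(0,v)/f(0,v)\), which is singular at \(v=0\).

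After your interpolation you are left with \(\dot w=\eps\tilde g+u\,k\), where \(k\not\equiv0\) near \(v=0\). This term is exponentially small only along the orbits of interest, where \(u\approx e^{-\xi/\eps}\) with \(\xi\ge\nu>0\), not on any fixed neighborhood. It therefore cannot be written as \(\eps\) times a function smooth up to \(\eps=0\), and calling it ``harmless in the DMS framework'' assumes what must be proved. Your claim that the cited corollary gives exactly the loss \(C^{r+4}\to C^r\) is also unsupported.

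Your intuition about the middle is right, but it has to be implemented differently. The paper proceeds as follows.
\begin{itemize}
\item It straightens fibers only on the two endpoint tubes, via the noncharacteristic transport problem \((3.6)\), where \(\mathsf G\ne0\).
\item It augments the system with \(\xi=-\eps\log n\), \(\tau=\eps t\), the frozen entry label \(\theta\), and \(\mu\). This makes Schecter's single unmatched center dimension exactly the scalar balance coordinate.
\item It applies Schecter's exchange lemma separately at the attracting endpoint and, in reversed time, at the repelling one.
\item It transports both exchanged families to the middle section \(s=0\). On the invariant sheet \(n=e^{-\xi/\eps}\), the non-\(\eps\) term appears as \(\eps^{-1}e^{-\xi/\eps}\mathsf H\), which is jointly \(C^r\)-flat for \(\xi\ge\nu\) by Lemma~\ref{lem:exp-flatness}.
\item The balance relation comes from matching the two traces by the implicit-function theorem, with Jacobian determinant \((\mathsf G/\mathsf F)|_{s=\mathscr S(\theta,\mu)}>0\) (Lemma~\ref{lem:middle-clock}). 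Equality of \(\xi\) on the sheet then forces equality of \(n\).
\end{itemize}
Your Step~3 identification of the singular limit as \((2.6)\) is fine once such a uniform passage exists, but the proposal does not supply one.
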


The proof follows three steps.  First we straighten the endpoint layer
fibers and augment the system by exponential, clock, endpoint, and
coefficient labels.  Second we apply the frozen-label exchange result
separately at the attracting and repelling endpoints.  Third we transport
the two exchanged graph families to a common middle chart, verify the two
unresolved balance--flight equations, and apply
Lemma~\ref{lem:middle-clock}; only after that matching do we project to the
physical variables and normalize the auxiliary clock.

The next two results only expose the interface with Schecter's preparation
and General Exchange Lemma.  The corollary is a specialization of
\cite{Schecter2008}, not a new independent exchange lemma.

\begin{lemma}[Frozen-slice preservation]
\label{lem:frozen-slices}
Let a local augmented flow have coordinates \((w,\theta,\mu)\), where
\(\dot\theta=0\) and \(\dot\mu=0\).  Every stable or unstable
asymptotic-phase fiber is contained in one slice
\((\theta,\mu)=\mathrm{const}\).  Consequently every phase projection
preserves the frozen labels.  If, in addition, the reduced vector field is
nonzero on a compact tube, parameter-dependent reduced flow-box coordinates
may be chosen over the identity on those labels.
\end{lemma}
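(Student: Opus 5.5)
The argument is a direct invariance statement, and I would organize it around the definition of asymptotic-phase fibers for an augmented system with frozen coordinates. Write the augmented field as \(\dot w=W(w,\theta,\mu)\), \(\dot\theta=0\), \(\dot\mu=0\), and let \(\phi^t\) denote its flow. Because \(\theta\) and \(\mu\) are first integrals, each slice \(\{(\theta,\mu)=\mathrm{const}\}\) is invariant, and \(\phi^t\) restricted to a slice is the flow of the frozen field \(W(\cdot,\theta,\mu)\).

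\textbf{Fibers lie in slices.} A point \(z'=(w',\theta',\mu')\) belongs to the stable fiber through a base point \(z=(w,\theta,\mu)\) of the invariant (slow or critical) manifold exactly when \(\|\phi^t(z')-\phi^t(z)\|\to0\), with the prescribed exponential rate, as \(t\to+\infty\); for unstable fibers one uses \(t\to-\infty\). The frozen components of \(\phi^t(z')-\phi^t(z)\) equal \((\theta'-\theta,\mu'-\mu)\) for every \(t\), so this difference can tend to zero only if \(\theta'=\theta\) and \(\mu'=\mu\). Hence every fiber lies in one slice.

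\textbf{Projections preserve labels.} The phase projection along fibers maps a point to the base point of its fiber. Since that base point lies in the same slice, the projection fixes \((\theta,\mu)\).

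\textbf{Flow-box coordinates.} Suppose the reduced field \(Y(\cdot,\theta,\mu)\) on the invariant manifold is nonzero on a compact tube. Choose a transversal \(\Sigma\) to \(Y(\cdot,0,0)\) in the tube. By continuity and compactness, \(\Sigma\times\{(\theta,\mu)\}\) stays uniformly transverse to \(Y\) for small labels. The flow-box map
\[
(\sigma,\tau,\theta,\mu)\mapsto\bigl(\phi_Y^\tau(\sigma;\theta,\mu),\theta,\mu\bigr)
\]
is then a local diffeomorphism. It is injective on the tube after shrinking, because flight times across a compact nonvanishing tube are bounded. It is jointly \(C^k\) in all variables by smooth dependence of solutions of ODEs on parameters, and by construction it covers the identity on the labels.

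The one technical point is uniqueness of the fiber characterization when labels are only continuous or merely \(C^k\). I would handle it by citing Fenichel's theorem, in the form used by Schecter, for the augmented system. That theorem applies because the frozen directions contribute zero eigenvalues and are therefore center directions, so the fibers it produces are exactly the ones characterized by exponential asymptotics. The characterization above then applies to them.
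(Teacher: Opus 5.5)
Your proposal is correct and follows the paper's proof. The frozen-label difference of two augmented trajectories is constant, so it must vanish when they converge along a fiber; hence the projection preserves labels. The flow-box claim is the parameter-dependent flow-box theorem applied slice by slice, which you make explicit with a transversal.

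Two small tightenings. First, injectivity of your flow-box map comes from the tube being a flow tube around a non-closed orbit arc, not from bounded flight times alone; in the paper's application the clock \(\tau\) is strictly increasing along reduced orbits, so no orbit can return to the transversal. Second, the labels should range over the fixed compact frozen block via a compactness argument, rather than over a small neighborhood of \((0,0)\).
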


\begin{proof}
The \((\theta,\mu)\)-difference of two augmented trajectories is constant.
Two points on one asymptotic-phase fiber have trajectories whose distance
tends to zero in the relevant time direction, so that constant difference
must vanish.  The invariant fibers therefore lie in frozen slices, so the
phase projection is fiberwise over the identity on the frozen labels.  Under
the stated nonvanishing condition, the parameter-dependent flow-box theorem
applied in each such slice gives the final assertion.
\end{proof}

\begin{corollary}[Schecter exchange with frozen endpoint and coefficient labels]
\label{cor:augmented-exchange}
Fix \(r\ge1\), a compact frozen-label block, and one incoming and one
outgoing endpoint tube.  Suppose that, after a layer-footpoint change, the
augmented endpoint fields are joint \(C^{r+3}\) standard slow--fast systems
\[
 \dot n=n\,\widetilde{\mathsf G}(n,\widetilde z,\eps),
 \qquad
 \dot{\widetilde z}=\eps\widetilde{\mathsf K}
       (n,\widetilde z,\eps),
 \qquad
 \widetilde z=(\alpha,\xi,\tau,\theta,\mu),
\]
with \(\dot\theta=\dot\mu=0\).  Assume that the sole normal direction is
uniformly attracting at the incoming tube and uniformly repelling at the
outgoing tube, and use reversed time at the latter.  Assume also that the
reduced center vector is uniformly nonzero on both compact tubes.  Let the
two input manifolds be joint \(C^{r+1}\) graph families parameterized by
\((\tau_0,\theta,\mu)\) and \((s_{\rm out},\theta,\mu)\), respectively.
Suppose their
limiting center projections are immersions and that the appropriate reduced
center vector is nowhere tangent to either projected family.

If \(d_\mu=\dim\mu\), then Schecter's center and incoming-manifold
dimensions are
\[
 m_{\rm S}=4+d_\mu,\qquad p_{\rm S}=2+d_\mu,
 \qquad m_{\rm S}-p_{\rm S}-1=1.
\]
After one common shrinking, Schecter's prepared charts are joint
\(C^{r+1}\), preserve \((\theta,\mu)\)-slices, and his Theorem~3.1 produces,
in two separate applications, a forward incoming exchanged family and a
time-reversed outgoing exchanged family.  Both extend jointly \(C^r\) in all
ending coordinates, \(\eps\), \(\theta\), and \(\mu\).  The last displayed
dimension is the single unmatched center coordinate used for the
entry--exit balance.
\end{corollary}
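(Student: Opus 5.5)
The plan is to check, one hypothesis at a time, that each endpoint tube meets the hypotheses of Schecter's General Exchange Lemma \cite{Schecter2008}, and then to apply his Theorem~3.1 twice: once forward at the attracting incoming tube, and once in reversed time at the repelling outgoing tube.

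\emph{Dimensions and smoothness.} The field is already in standard slow--fast form with a single fast coordinate \(n\). Because \(n=0\) is invariant and \(\dot n=n\widetilde{\mathsf G}\), the normally hyperbolic critical manifold is \(\{n=0\}\). Along it the fast eigenvalue is \(\widetilde{\mathsf G}(0,\widetilde z,0)\), which is negative on the incoming tube and positive on the outgoing tube; reversing time at the outgoing tube makes it attracting there as well. The center variables \(\widetilde z=(\alpha,\xi,\tau,\theta,\mu)\) have dimension \(m_{\rm S}=4+d_\mu\). The input manifolds are parameterized by \((\tau_0,\theta,\mu)\) or \((s_{\rm out},\theta,\mu)\), so \(p_{\rm S}=2+d_\mu\). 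Subtracting gives the single unmatched center direction, \(m_{\rm S}-p_{\rm S}-1=1\). For regularity, I will cite Schecter's preparation step: from \(C^{r+3}\) data it produces \(C^{r+1}\) Fenichel-type prepared charts, in which the fast variable is rectified and the slow flow is straightened. His exchange lemma then loses one further derivative. This accounts for the stated \(C^{r+1}\) charts and the joint \(C^r\) conclusion.

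\emph{Preservation of frozen labels.} The point to be checked is that the preparation can be chosen to respect the \((\theta,\mu)\)-slices. Since \(\dot\theta=\dot\mu=0\), Lemma~\ref{lem:frozen-slices} shows that the stable and unstable fibrations, and hence the fiber projections, lie inside frozen slices. By the reduced nonvanishing hypothesis, the last step of Lemma~\ref{lem:frozen-slices} provides flow-box coordinates that act as the identity on \((\theta,\mu)\). Composing these gives prepared charts in which \(\theta\) and \(\mu\) are coordinates carried along unchanged. Treating them as parameters in Schecter's theorem is therefore legitimate, and joint smoothness in them follows from joint smoothness in all center variables.

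\emph{Transversality hypotheses and conclusion.} Schecter's transversality requirement has two parts. First, the limit \(\eps=0\) of the input family, after projection to the critical manifold, must be an immersed \(p_{\rm S}\)-dimensional family. Second, the reduced center vector must be nowhere tangent to it, so that saturating by the slow flow gives an immersed \((p_{\rm S}+1)\)-dimensional family. Both parts are exactly the assumptions of the corollary, applied at each tube; at the outgoing tube the reduced vector is the time-reversed one. Theorem~3.1 then shows that the exchanged manifold at the exit section is \(C^r\)-close to the fast fibers over the flowed family, with a joint \(C^r\) extension in the ending coordinates and \(\eps\), and with \(\theta,\mu\) carried as smooth parameters.

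\emph{Main obstacle.} I expect the main difficulty to be uniformity. The frozen-label block is compact, and a single shrinking of the tubes and of \(\eps_1\) has to serve every slice simultaneously. My approach is to note that Schecter's constants depend only on finitely many \(C^{r+3}\) norms and on spectral gaps, which are bounded uniformly on the compact block, so one common choice of neighborhoods works. The remaining care is to keep the forward and the reversed applications independent, so that neither uses information from the other's matching.
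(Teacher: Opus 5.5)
Your outline follows the paper's route: Schecter's preparation, Lemma~\ref{lem:frozen-slices} to keep the $(\theta,\mu)$-slices, the immersion and non-tangency checks, and two separate applications of Theorem~3.1. However, it skips the hypothesis checks that make up most of the paper's proof, and your uniformity paragraph does not replace them. Saying that Schecter's constants depend only on finitely many norms and spectral gaps assumes that his quantitative hypotheses already hold with fixed constants; it does not establish them. Two of these hypotheses are not norm conditions at all. The first is the pair of rate inequalities \textup{(E1)}--\textup{(E2)}, which compare the normal contraction with $r$ times the growth of the center solution operator. The second is the transit-time hypothesis \textup{(E11)}, which requires $K_1/\eps\le T\le K_2/\eps$ with $K_2/K_1$ below a threshold set by the chosen exponents.

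What makes both checkable is the normal splitting. It is $(\dim x,\dim y)=(1,0)$ at the incoming tube and $(0,1)$ at the outgoing one, so $xy\equiv0$ in Schecter's prepared form $\dot z=\eps(e_1+Lxy)$. Your phrase ``the slow flow is straightened'' hides this point: in general the prepared center equation keeps the $\eps Lxy$ term, and it is exactly rectified here only because one normal block is empty. Hence the adapted center equations are exactly $\dot u_0=\eps$, $\dot v_0=0$, $\dot w_0=0$. This gives \textup{(E6)}--\textup{(E7)} with transit exponent one and makes the center solution operator the identity. Suppose the normal rates are at most $-\nu_{\rm n}$, using time reversal at the outgoing tube. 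Then you can take $\lambda_{\rm S}=-\nu_{\rm n}/2$, $\mu_{\rm S}=\nu_{\rm n}/4$, and $\beta_0$ small enough that $\lambda_{\rm S}+\mu_{\rm S}+r\beta_0<0<\mu_{\rm S}-\max\{6,2r+1\}\beta_0$. Moreover, the transit time on $w^1=0$ is exactly $u^1/\eps$. So you choose $\beta_0<\beta<\beta_1$ within the same inequalities and shrink the ending $u^1$-window once so that $K_2/K_1<\beta_1/\beta$ for every frozen label. With these additions your argument coincides with the paper's.
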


\begin{proof}
Schecter's Sections~2.1, 2.4, and 2.5 and equations (2.9)--(2.16)
\cite{Schecter2008} prepare each standard endpoint system as
\[
\begin{aligned}
 \dot x&=A(x,y,z,\eps)x,\\
 \dot y&=B(x,y,z,\eps)y,\\
 \dot z&=\eps\bigl(e_1+L(x,y,z,\eps)xy\bigr).
\end{aligned}
\]
Here \(x\) and \(y\) are the stable and unstable normal blocks and \(z\)
is the prepared image of \(\widetilde z\).  At the incoming endpoint
\((\dim x,\dim y)=(1,0)\); at the outgoing endpoint
\((\dim x,\dim y)=(0,1)\), or equivalently \((1,0)\) after time reversal.
Thus \(xy\equiv0\).  In Schecter's coordinates adapted to the projected
incoming manifold, \(u_0\) is the rectified reduced-flight coordinate,
\(v_0\) contains its tangent and frozen-label coordinates, and the remaining
scalar \(w_0\) is the balance direction.  The center equations reduce to
\[
 \dot u_0=\eps,\qquad \dot v_0=0,
 \qquad \dot w_0=0.
\]
This verifies the rectified-speed condition with transit exponent one.
Lemma~\ref{lem:frozen-slices} shows that the preparation retains
\((\theta,\mu)\) among the center labels.

The finite-regularity remark following Schecter's Theorem~2.1 (printed
p.~413), together with the adapted construction in Sections~2.3--2.4,
gives a \(C^{r+1}\) prepared form from a \(C^{r+3}\) standard field.  Fix
the compact frozen-label block before choosing charts.  If the scalar stable
rates, using time reversal at the second endpoint, are at most
\(-\nu_{\rm n}<0\), take \(\lambda_{\rm S}=-\nu_{\rm n}/2\) and
\(\mu_{\rm S}=\nu_{\rm n}/4\).  In Schecter's rectified prepared center
coordinates the center solution operator is the identity, so
\(\beta_0>0\) may be chosen small enough that
\[
 \lambda_{\rm S}+\mu_{\rm S}+r\beta_0<0
 <\mu_{\rm S}-\max\{6,2r+1\}\beta_0.
\]
This verifies Schecter's \textup{(E1)}--\textup{(E2)} uniformly.  The stated
graph and flow-box assumptions give \textup{(E3)}--\textup{(E10)}.  Choose
\(\beta_0<\beta<\beta_1\) within the same strict inequalities and shrink the
endpoint tubes once.  Then the common transit bounds
\(K_1/\eps\le T\le K_2/\eps\), with \(K_2/K_1\) sufficiently close to one,
give \textup{(E11)} for every frozen label, exactly as in Schecter's
Section~4.1.
Schecter's Theorem~3.1 therefore gives \(C^r\) exchanged graph functions in
all ending chart variables and \(\eps\).  Frozen-slice preservation makes
this precisely joint \(C^r\) dependence on
\((\theta,\eps,\mu)\), as asserted.
\end{proof}

\begin{lemma}[From exchanged graphs to fixed-section traces]
\label{lem:exchange-to-trace}
Let \(\mathcal E_\eps(\eta,\lambda)\) be either exchanged graph family
provided by Corollary~\ref{cor:augmented-exchange}, where \(\eta\) denotes
its free endpoint variable and \(\lambda\) collects the frozen labels.
Suppose that its limiting reduced orbit meets an ending section
\(\Sigma_{\rm e}=\{\zeta=\zeta_{\rm e}\}\) transversely, with
\(
 |X_0\zeta|\ge\gamma>0
\)
on a compact set of such intersections.  Suppose also that the regular
orbit segment from \(\Sigma_{\rm e}\) to a fixed middle section
\(\Sigma_{\rm m}\) remains in a common flow box and meets
\(\Sigma_{\rm m}\) transversely.

After one common shrinking, every orbit in \(\mathcal E_\eps\) has a
unique first hit on \(\Sigma_{\rm e}\).  Its hitting time and endpoint are
joint \(C^r\) functions of \((\eta,\eps,\lambda)\), and regular transport
from that endpoint gives a joint \(C^r\) embedded trace on
\(\Sigma_{\rm m}\).  The same conclusion holds for an outgoing family
after applying it to the reversed field.
\end{lemma}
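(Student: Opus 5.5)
We prove Lemma~\ref{lem:exchange-to-trace} by reducing it to the implicit function theorem for the hitting-time equation, applied uniformly in \(\eps\), followed by composition with a regular flow-box map. Write the exchanged family as \(\mathcal E_\eps(\eta,\lambda)\). By Corollary~\ref{cor:augmented-exchange} it is a joint \(C^r\) graph over ending chart coordinates, extending to \(\eps=0\). So there is a joint \(C^r\) parametrization
\[
(\eta,\lambda,\eps)\longmapsto \phi_\eps(\eta,\lambda)
\]
of the exchanged points on a fixed exit set of the endpoint tube, together with \(\phi_0\) on the limiting reduced orbit family. The orbit of such a point is followed by the time-\(t\) flow \(\Phi^t_{\eps,\lambda}\) of the full field. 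In a neighbourhood of \(\Sigma_{\rm e}\) this field is regular: for the fast-fiber endpoints it is the layer flow, and near a reduced orbit it is the rescaled slow flow. The compactness hypothesis on the intersections keeps this neighbourhood uniform.

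\textbf{Step 1: hitting equation.} Define
\[
G(t,\eta,\lambda,\eps)=\zeta\bigl(\Phi^t_{\eps,\lambda}(\phi_\eps(\eta,\lambda))\bigr)-\zeta_{\rm e},
\]
using the time scale in which the limiting orbit has speed \(X_0\). Then \(G\) is joint \(C^r\), because the flow of a \(C^{r+3}\) field is \(C^{r+3}\) in \((t,\text{point},\eps,\lambda)\) and \(\phi\) is \(C^r\). At \(\eps=0\) the zero set is the transverse intersection of the limiting orbit with \(\Sigma_{\rm e}\), and there
\[
\partial_tG=X_0\zeta,\qquad |X_0\zeta|\ge\gamma.
\]
The implicit function theorem gives a \(C^r\) hitting time \(t_{\rm e}(\eta,\lambda,\eps)\) near each base intersection. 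A finite subcover of the compact set, followed by one common shrinking of \(\eps\) and of the labels, makes the local solutions patch into a single function. If the rescaled time is \(\eps\)-dependent, we convert back by a smooth factor; this does not affect regularity.

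\textbf{Step 2: uniqueness of the first hit.} This is the main obstacle. On the transit segment before \(\Sigma_{\rm e}\), the limiting orbit stays a positive distance from \(\Sigma_{\rm e}\), apart from the transverse crossing, and \(\zeta\) is strictly monotone across a slab of width \(\gamma/2\). By continuity, for small \(\eps\) the perturbed orbits also avoid \(\Sigma_{\rm e}\) before entering the slab, and inside the slab they cross it exactly once. The uniform \(C^0\) closeness of \(\mathcal E_\eps\) to \(\mathcal E_0\) is part of Corollary~\ref{cor:augmented-exchange}, since the family extends continuously to \(\eps=0\).

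The delicate point is the exponentially long transit time: we must avoid any earlier excursion to \(\Sigma_{\rm e}\). We handle this by placing \(\Sigma_{\rm e}\) beyond the exchange chart. The graph family is given on ending chart coordinates that already lie after the long passage, so only a bounded-time regular segment remains before \(\Sigma_{\rm e}\). On that bounded segment, ordinary continuous dependence of the flow on \((\eta,\lambda,\eps)\) suffices.

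\textbf{Step 3: transport and embedding.} The endpoint \(\Phi^{t_{\rm e}}(\phi_\eps)\) is \(C^r\). Transport from \(\Sigma_{\rm e}\) to \(\Sigma_{\rm m}\) inside the common flow box is a \(C^{r+3}\) section map, again by the implicit function theorem applied to the transverse hit on \(\Sigma_{\rm m}\). Composing the two gives a joint \(C^r\) trace on \(\Sigma_{\rm m}\).

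To see that the trace is embedded, note the following at \(\eps=0\):
\begin{itemize}
\item the limiting projected family is an immersion;
\item the reduced vector field is nowhere tangent to it, by hypothesis;
\item holonomy maps between transverse sections are diffeomorphisms.
\end{itemize}
Hence the trace has injective differential at \(\eps=0\). Injectivity of the map itself holds on a compact set after shrinking, and both properties persist for small \(\eps\) by \(C^1\) closeness.

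For the outgoing family, we apply the same argument to the time-reversed field. Corollary~\ref{cor:augmented-exchange} supplies the outgoing family in exactly this reversed form.
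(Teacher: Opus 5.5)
Your proposal is correct and follows essentially the same route as the paper's proof. The shared steps are an implicit-function hitting time with \(\partial_t=X_0\zeta\neq0\), compactness plus an isolating strip on the incoming side of \(\Sigma_{\rm e}\) for the unique first hit, regular transport to \(\Sigma_{\rm m}\) by section maps that are local diffeomorphisms, and time reversal for the outgoing family; your immersion-plus-openness argument just spells out why the embedding persists. One small slip is your aside about converting the rescaled time back ``by a smooth factor'': the hitting time has to stay in the slow scale, because passing to fast time multiplies it by \(\eps^{-1}\), but that slow-scale time is exactly what the lemma's joint \(C^r\) claim refers to, so nothing in the argument is lost.
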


\begin{proof}
Let \(\varphi\) be a defining function for \(\Sigma_{\rm e}\) and let
\(\phi_\eps^t\) denote the endpoint flow in the prepared physical
coordinates.  At a limiting hit,
\[
 \partial_t\bigl(\varphi(\phi_0^t(z))\bigr)
 =X_0\varphi\ne0.
\]
The parameterized implicit-function theorem therefore gives a joint
\(C^r\) hitting time and hit point.  Compactness supplies one neighborhood
and the lower bound \(\gamma/2\); an isolating strip on the incoming side
of \(\Sigma_{\rm e}\) makes this hit the unique first one.  The regular
segment is treated by the same hitting-time argument in its common flow
box.  Flow maps between transverse sections are local diffeomorphisms, so
the transported family remains an embedding.  Replacing \(X_\eps\) by
\(-X_\eps\) proves the outgoing assertion.
\end{proof}

\begin{lemma}[Uniform exponential flatness]
\label{lem:exp-flatness}
Fix \(\nu>0\), a compact interval \(I_\xi\subset[\nu,\infty)\),
and nonnegative integers \(m\) and \(r\).  On \(I_\xi\), set
\[
 E_m(\xi,\eps)=
 \begin{cases}
  \eps^{-m}e^{-\xi/\eps},&\eps>0,\\
  0,&\eps=0.
 \end{cases}
\]
Then \(E_m\) is \(C^r\) up to \(\eps=0\), and every derivative containing
an \(\eps=0\) value vanishes there.  More precisely, for \(j+k\le r\),
\[
 \left|\partial_\eps^j\partial_\xi^kE_m(\xi,\eps)\right|
 \le C_{jkm}\eps^{-(m+2j+k)}e^{-\nu/\eps}.
\tag{3.0a}
\]
The same conclusion holds, including all mixed derivatives in \(y\) and
\(\eps\), after composition with a joint \(C^r\) function
\(\Xi(y,\eps)\ge\nu\).
\end{lemma}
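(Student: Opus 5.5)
We must prove Lemma~\ref{lem:exp-flatness}: the function \(E_m(\xi,\eps)=\eps^{-m}e^{-\xi/\eps}\) on \(I_\xi\times(0,\eps_0)\), extended by zero at \(\eps=0\), is \(C^r\) with the bound (3.0a). The approach is to first get an explicit formula for all partial derivatives on \(\eps>0\), then turn the resulting pointwise bound into vanishing and continuity up to \(\eps=0\).

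\textbf{Derivative formula for \(\eps>0\).} Each \(\xi\)-derivative multiplies \(E_m\) by \(-1/\eps\), so \(\partial_\xi^kE_m=(-1)^k\eps^{-(m+k)}e^{-\xi/\eps}\). For the \(\eps\)-derivatives I will prove by induction on \(j\) that
\[
 \partial_\eps^j\bigl(\eps^{-n}e^{-\xi/\eps}\bigr)
 =\sum_{i=0}^{j}P_{j,i}(\xi)\,\eps^{-n-j-i}e^{-\xi/\eps},
\]
where each \(P_{j,i}\) is a polynomial of degree at most \(i\). The induction step follows from
\[
 \partial_\eps\bigl(\eps^{-s}e^{-\xi/\eps}\bigr)
 =\bigl(-s\,\eps^{-s-1}+\xi\,\eps^{-s-2}\bigr)e^{-\xi/\eps}.
\]
Applying this with \(n=m+k\) handles the mixed derivatives.

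\textbf{Turning the formula into (3.0a).} Since \(I_\xi\) is compact, the polynomials \(P_{j,i}\) are bounded on it. Because \(\xi\ge\nu\), we have \(e^{-\xi/\eps}\le e^{-\nu/\eps}\). For \(\eps\le1\), the worst power of \(\eps\) is \(\eps^{-(m+k+2j)}\). Together these give (3.0a), with constants depending only on \(j,k,m\), \(\nu\), and \(I_\xi\).

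\textbf{\(C^r\) regularity up to \(\eps=0\).} For any \(N\), \(\eps^{-N}e^{-\nu/\eps}\to0\), uniformly in \(\xi\). So every derivative of order at most \(r\) tends to \(0\) as \(\eps\downarrow0\). I will then show by induction on the order that all derivatives exist at \(\eps=0\) and equal \(0\).
\begin{itemize}
\item \(\xi\)-derivatives of the zero function vanish trivially.
\item For an \(\eps\)-derivative, the one-sided difference quotient is \(\bigl(D(\xi,\eps)-0\bigr)/\eps\), where \(D\) is a lower-order derivative. Since \(D\) is bounded by \(C\eps^{-N}e^{-\nu/\eps}\), this quotient tends to zero.
\end{itemize}
Continuity of all derivatives on the closed strip then follows from the uniform limit.

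\textbf{Composition with \(\Xi(y,\eps)\ge\nu\).} Here \(\Xi\) is assumed jointly \(C^r\), with \(y\) in a compact set so that \(\Xi\) and its derivatives are bounded. I will use the Faà di Bruno / chain-rule expansion. On \(\eps>0\), any mixed derivative of \(E_m(\Xi(y,\eps),\eps)\) of total order at most \(r\) is a finite sum of terms of the form
\[
 \bigl(\partial_\eps^{j'}\partial_\xi^{k'}E_m\bigr)(\Xi,\eps)
 \times\bigl(\text{products of derivatives of }\Xi\bigr).
\]
The derivatives of \(\Xi\) are bounded, and (3.0a) applies because \(\Xi\) lies in a compact subinterval of \([\nu,\infty)\). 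So each term is bounded by \(C\eps^{-N}e^{-\nu/\eps}\) for some \(N\). The same difference-quotient induction then gives \(C^r\) regularity with all derivatives vanishing at \(\eps=0\).

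The only mild obstacle is the bookkeeping in the last step. It needs a uniform power bound \(N\le m+2r\), and it needs boundedness of \(\Xi\), which follows from compactness of its domain after shrinking.
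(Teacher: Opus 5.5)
Your proposal is correct and follows the same route as the paper. The paper's proof also writes every derivative as \(e^{-\xi/\eps}\) times a finite sum of bounded powers of \(\xi\) and negative powers of \(\eps\), lets exponential decay dominate, and invokes the chain rule for the composition with \(\Xi\). You supply the details the paper leaves implicit: the explicit induction, the difference-quotient argument at \(\eps=0\), and the restriction \(\eps\le1\). That restriction is genuinely needed, since (3.0a) with the single power \(\eps^{-(m+2j+k)}\) holds only on a bounded \(\eps\)-range.
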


\begin{proof}
Repeated differentiation produces \(e^{-\xi/\eps}\) times a finite sum of
bounded powers of \(\xi\) and negative powers of \(\eps\); the right-hand
side of \((3.0\mathrm a)\) is a harmless common majorant.  Exponential
decay dominates every negative power of \(\eps\).  The chain rule gives the
composition statement.
\end{proof}

\begin{lemma}[Explicit matching on a physical middle section]
\label{lem:middle-clock}
Fix a compact block of frozen labels \((\theta,\mu)\) and a common middle
section \(\Sigma_{\rm m}=\{s=s_{\rm m}\}\).  Let \(Z_{\rm m}\subset
\mathbb R^2\) have coordinates \((\xi,\tau)\), and put
\(R(\xi,\tau)=(\xi,-\tau)\).  Suppose the forward incoming and backward
outgoing exchanged traces on \(\Sigma_{\rm m}\) are joint \(C^r\)
embeddings
\[
\begin{aligned}
 \Gamma^-_\eps(\tau_0,\theta,\mu)
   &=(\Xi^-_\eps,T^-_\eps)\in Z_{\rm m},\\
 \Gamma^+_\eps(s_{\rm out},\theta,\mu)
   &=(\Xi^+_\eps,T^+_\eps)\in Z_{\rm m},
\end{aligned}
\tag{3.0b}
\]
and lie where \(\xi\ge\nu>0\) on the invariant physical sheet
\[
 n=N(\xi,\eps):=
 \begin{cases}e^{-\xi/\eps},&\eps>0,\\0,&\eps=0.\end{cases}
\tag{3.0c}
\]
For an explicit common-point unknown \(\mathbf z_{\rm m}\in RZ_{\rm m}\),
define
\[
 \mathfrak M_\eps
 (\mathbf z_{\rm m},s_{\rm out},\tau_0;\theta,\mu)
 =
 \begin{pmatrix}
  \mathbf z_{\rm m}-R\Gamma^-_\eps(\tau_0,\theta,\mu)\\
  R\Gamma^+_\eps(s_{\rm out},\theta,\mu)
   -R\Gamma^-_\eps(\tau_0,\theta,\mu)
 \end{pmatrix}\in\mathbb R^4.
\tag{3.0d}
\]
Thus, for fixed \((\theta,\eps,\mu)\), its four unknowns are
\(\dim\mathbf z_{\rm m}+1+1=4\).

Suppose that, as functions on fixed neighborhoods,
\[
\begin{aligned}
 \Gamma^-_0(\tau_0,\theta,\mu)
 &=
 \left(
 -\int_\theta^{s_{\rm m}}\frac{\mathsf G}{\mathsf F}\,\dd s,\,
 \tau_0+\int_\theta^{s_{\rm m}}\frac{\dd s}{\mathsf F}
 \right),\\
 \Gamma^+_0(s_{\rm out},\theta,\mu)
 &=
 \left(
 \int_{s_{\rm m}}^{s_{\rm out}}\frac{\mathsf G}{\mathsf F}\,\dd s,\,
 \tau_1(\theta,\mu)
 -\int_{s_{\rm m}}^{s_{\rm out}}\frac{\dd s}{\mathsf F}
 \right),
\end{aligned}
\tag{3.0e}
\]
where the integrands are evaluated at \((s,0,0,\mu)\).  If
\[
 s_{\rm out}=\mathscr S(\theta,\mu),\qquad \tau_0=0
\tag{3.0f}
\]
balances the first components and the two flight times, and
\((\mathsf G/\mathsf F)(\mathscr S(\theta,\mu),0,0,\mu)\) has a common
positive lower bound, then one common shrinking gives a unique joint
\(C^r\) zero of \(\mathfrak M_\eps\).

If the original phase equations are independent of the auxiliary clock,
the physical passage is obtained by projecting the matched orbit away from
\(\tau\).  A clock lift may then be translated to have incoming value zero;
the matching calculation does not assert that its auxiliary solution has
\(\tau_0=0\) for \(\eps>0\).
\end{lemma}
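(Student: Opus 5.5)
The plan is to treat $\mathfrak M_\eps$ as a map
\[
(\mathbf z_{\rm m},s_{\rm out},\tau_0)\in\R^4\longmapsto\R^4,
\]
with $(\theta,\eps,\mu)$ entering as parameters, and to apply the parameterized implicit function theorem at $\eps=0$. Two preliminary facts come first.

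\emph{Regularity.} The traces $\Gamma^\pm_\eps$ are joint $C^r$ in their arguments, including $\eps$ up to $0$; this is the hypothesis, supplied in practice by Corollary~\ref{cor:augmented-exchange} and Lemma~\ref{lem:exchange-to-trace}. It follows that $\mathfrak M_\eps$ is joint $C^r$ in all unknowns and parameters.

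\emph{Sheet consistency.} Since both traces lie where $\xi\ge\nu$, the physical sheet coordinate $n=N(\xi,\eps)$ is $C^r$ and flat at $\eps=0$, by Lemma~\ref{lem:exp-flatness}. Matching in $(\xi,\tau)$ is therefore equivalent to matching on the full sheet, and no hidden $n$-equation is lost.

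\textbf{The singular zero.} At $\eps=0$ I will verify that
\[
\mathbf z_{\rm m}=R\Gamma^-_0(0,\theta,\mu),\qquad s_{\rm out}=\mathscr S(\theta,\mu),\qquad \tau_0=0
\]
is a zero. The first block vanishes by the definition of $\mathbf z_{\rm m}$. The second block vanishes by (3.0f), read against the explicit formulas (3.0e). Balance of the $\xi$-components means
\[
-\int_\theta^{s_{\rm m}}\frac{\mathsf G}{\mathsf F}\,\dd s=\int_{s_{\rm m}}^{\mathscr S}\frac{\mathsf G}{\mathsf F}\,\dd s .
\]
Balance of flight times means
\[
\int_\theta^{s_{\rm m}}\frac{\dd s}{\mathsf F}=\tau_1-\int_{s_{\rm m}}^{\mathscr S}\frac{\dd s}{\mathsf F}.
\]
The reflection $R$ only changes the sign of the $\tau$ entry on both sides, so it does not affect vanishing.

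\textbf{The Jacobian.} The derivative with respect to $(\mathbf z_{\rm m},s_{\rm out},\tau_0)$ at this zero is block triangular. The first block contributes the identity $I_2$ in $\mathbf z_{\rm m}$. The second block is independent of $\mathbf z_{\rm m}$, so only its $2\times2$ derivative in $(s_{\rm out},\tau_0)$ matters. From (3.0e) it is
\[
\begin{pmatrix}
(\mathsf G/\mathsf F)(\mathscr S) & 0\\
1/\mathsf F(\mathscr S) & 1
\end{pmatrix}.
\]
The $\xi$ entry of $R\Gamma^+_0$ has $s_{\rm out}$-derivative $(\mathsf G/\mathsf F)(\mathscr S)$, and the $\xi$ entry of $R\Gamma^-_0$ does not depend on $\tau_0$. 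In the $\tau$ row, $R\Gamma^+_0$ contributes $+1/\mathsf F$ from $-(-\int 1/\mathsf F)$ in $s_{\rm out}$, and $-R\Gamma^-_0$ contributes $+1$ in $\tau_0$. The determinant equals $(\mathsf G/\mathsf F)(\mathscr S,0,0,\mu)$, up to sign conventions that do not affect nonvanishing. It is bounded away from zero uniformly on the compact label block by hypothesis.

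\textbf{Uniformity and conclusion.} The main obstacle is obtaining one common shrinking that is uniform in the labels. Since the nondegeneracy constant and the $C^r$ (indeed $C^1$) moduli of $\mathfrak M_\eps$ are uniform on the compact block $\{(\theta,\mu)\}$, I will apply a quantitative implicit function theorem, i.e.\ a contraction on a fixed ball. This gives a single neighborhood and a single $\eps_1$ on which the zero is unique, and $C^r$ dependence on $(\theta,\eps,\mu)$ then follows from the $C^r$ implicit function theorem.

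For the final remark, clock-independence of the phase equations implies that $\tau$-translations map solutions to solutions. Projecting away $\tau$ therefore gives the physical passage, and subtracting the incoming clock value $\tau_0(\theta,\eps,\mu)$, itself $C^r$, normalizes the lift. We only know $\tau_0=0$ at $\eps=0$, which is why no such claim is made for $\eps>0$.
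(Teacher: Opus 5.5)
Your proposal is correct and follows the paper's proof essentially step for step. You use the same singular zero $\mathbf z_{\rm m}=R\Gamma^-_0(0,\theta,\mu)$, $s_{\rm out}=\mathscr S(\theta,\mu)$, $\tau_0=0$, the same block-triangular Jacobian with determinant $(\mathsf G/\mathsf F)(\mathscr S(\theta,\mu),0,0,\mu)$, compactness plus the parameterized implicit-function theorem, and the $\tau$-translation gauge argument.

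The only difference is in ruling out a hidden $n$-equation. The paper does this through the conserved quantity $\xi+\eps\log n$, whose derivative is $-\eps\mathsf G+\eps\mathsf G=0$. You read it off directly from both traces lying on the graph $n=N(\xi,\eps)$, which is equally valid; your appeal to Lemma~\ref{lem:exp-flatness} there concerns regularity, not that equivalence.
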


\begin{proof}
At the singular point \((3.0\mathrm f)\), put
\(\mathbf z_{{\rm m},0}=R\Gamma^-_0(0,\theta,\mu)\).  The second block of
\(\mathfrak M_0\) is exactly
\[
 \begin{pmatrix}
  \displaystyle\int_\theta^{s_{\rm out}}
        \frac{\mathsf G}{\mathsf F}\,\dd s\\[2mm]
  \displaystyle\tau_0+\int_\theta^{s_{\rm out}}
        \frac{\dd s}{\mathsf F}-\tau_1(\theta,\mu)
 \end{pmatrix}.
\tag{3.0g}
\]
In the unknown order \((\mathbf z_{\rm m},s_{\rm out},\tau_0)\),
\[
 D\mathfrak M_0=
 \begin{pmatrix}
  I_2&0&-\partial_{\tau_0}R\Gamma^-_0\\
  0&\multicolumn{2}{c}{
  \displaystyle
  \begin{pmatrix}
   \mathsf G/\mathsf F&0\\
   1/\mathsf F&1
  \end{pmatrix}_{s=\mathscr S(\theta,\mu)}
  }
 \end{pmatrix}.
\tag{3.0h}
\]
It is block triangular and its determinant is
\((\mathsf G/\mathsf F)(\mathscr S(\theta,\mu),0,0,\mu)>0\).
Compactness and the parameterized implicit-function theorem give the
claimed joint solution.

For \(\eps>0\),
\[
 \frac{\dd}{\dd t}(\xi+\eps\log n)
 =-\eps\mathsf G+\eps\mathsf G=0.
\]
Hence equality of the two \((\xi,\tau)\)-coordinates on the physical sheet
also forces equality of \(n\); at \(\eps=0\), both traces have \(n=0\).
Thus zeros of \(\mathfrak M_\eps\) are full augmented-orbit intersections,
not merely balance solutions.  Finally, translations
\(\tau\mapsto\tau+C\) leave every nonclock component unchanged, which
proves the clock-gauge statement.
\end{proof}

\begin{proof}[Proof of Theorem~\ref{thm:entryexit}]
Use the axial entry--exit coordinate \(s=-v\) and the normal coordinate
\(n=u\), and set
\[
\begin{aligned}
 \mathsf F(s,n,\eps,\mu)&=-g(n,-s,\eps,\mu),\\
 \mathsf G(s,n,\eps,\mu)&= f(n,-s,\eps,\mu),\\
 \mathsf H(s,n,\eps,\mu)&=-h_0(n,-s,\eps,\mu).
\end{aligned}
\tag{3.1}
\]
Then
\[
 \dot s=\eps\mathsf F+n\mathsf H,\qquad
 \dot n=n\mathsf G,
\tag{3.2}
\]
with
\[
 \mathsf F(s,0,0,\mu)>0,\qquad
 s\mathsf G(s,0,0,\mu)>0\quad(s\ne0).
\tag{3.3}
\]
If \(s_0=-\zeta<0\) and
\(s_1=-\mathcal B_\mu(\zeta)>0\), then
\[
 \int_{s_0}^{s_1}
 \frac{\mathsf G(s,0,0,\mu)}{\mathsf F(s,0,0,\mu)}\,\dd s=0.
\tag{3.4}
\]
These are Hsu's localized sign and balance hypotheses.  The two layer
trajectories required there are precisely \((2.5)\) in the new
coordinates.

It remains to prove joint dependence on \(\mu\).  Choose compact endpoint
and parameter sets \(S_-\Subset(-\infty,0)\) and \(K_\mu\Subset M\), and
shrink them once so that their product contains the base label and all
incoming and outgoing endpoint tubes below.  From now on this compact block
is fixed: every endpoint neighborhood, chart, spectral constant, transit
bound, and transversality constant is chosen once and is valid for every
\((\theta,\mu)\in S_-\times K_\mu\).  Write
\[
 s_1=\mathscr S(s_0,\mu):=-\mathcal B_\mu(-s_0).
\]
Normal rates at the two endpoint families are uniformly separated from
zero.  Introduce
\[
 \xi=-\eps\log n,\qquad \tau=\eps t,
\]
and, before any normal-form change, adjoin a frozen incoming endpoint
\(\theta\) and the coefficient parameter:
\[
\begin{aligned}
 \dot s&=\eps\mathsf F+n\mathsf H,&
 \dot n&=n\mathsf G,\\
 \dot\xi&=-\eps\mathsf G,&
 \dot\tau&=\eps,&
 \dot\theta&=0,&
 \dot\mu&=0.
\end{aligned}
\tag{3.5}
\]
At \(\eps=0\), \(n=0\) is a manifold of equilibria with center variables
\((s,\xi,\tau,\theta,\mu)\) and single normal eigenvalue
\(\mathsf G(s,0,0,\mu)\).

We first straighten the layer fibers on each endpoint tube.  Since
\(\mathsf G\) is bounded away from zero there, the noncharacteristic
transport problem
\[
 \mathsf H\,\partial_s\mathcal A+
 \mathsf G\,\partial_n\mathcal A=0,\qquad
 \mathcal A(s,0,\eps,\mu)=s
\tag{3.6}
\]
has a joint \(C^{r+4}\) solution.  The coordinate
\(\alpha=\mathcal A(s,n,\eps,\mu)\) satisfies
\[
 \dot\alpha=\eps\mathsf F\,\partial_s\mathcal A.
\tag{3.7}
\]
Thus, with \(n\) fast and
\(\widetilde{\mathbf z}=(\alpha,\xi,\tau,\theta,\mu)\) slow, each endpoint
system is
an exact standard slow--fast system
\[
 \dot n=n\,\widetilde{\mathsf G}(n,\widetilde{\mathbf z},\eps),
 \qquad
 \dot{\widetilde{\mathbf z}}
 =\eps\widetilde{\mathsf K}(n,\widetilde{\mathbf z},\eps).
\tag{3.8}
\]
The pushed-forward field is \(C^{r+3}\), because one derivative of
\(\mathcal A\) occurs in \((3.7)\).

Schecter's Sections~2.1, 2.4, and 2.5 and equations (2.9)--(2.16)
\cite{Schecter2008} prepare \((3.8)\), near each compact reduced endpoint
orbit, as follows, with Schecter's center variable \(c\) relabeled \(z\) to
avoid conflict with the return curvature:
\[
\begin{aligned}
 \dot x&=A(x,y,z,\eps)x,\\
 \dot y&=B(x,y,z,\eps)y,\\
 \dot z&=\eps\bigl(e_1+L(x,y,z,\eps)xy\bigr).
\end{aligned}
\tag{3.9}
\]
Here \(x\) corresponds to the sole incoming stable normal coordinate \(n\)
and \(y\) is absent; at the outgoing endpoint \(y\) corresponds to \(n\)
and \(x\) is absent, before the equivalent time reversal.  The prepared
center variable \(z\) is the image of
\((\alpha,\xi,\tau,\theta,\mu)\).  In the subsequently adapted split,
\(u_0\) is the reduced-flight coordinate, \(v_0\) contains the tangent
coordinates of the projected incoming family (including the frozen labels),
and the scalar \(w_0\) is the remaining balance coordinate.
The finite-regularity remark following Schecter's Theorem~2.1
(printed p.~413), together with the adapted construction in
Sections~2.3--2.4, gives a \(C^{r+1}\) normal form from a \(C^{r+3}\)
standard slow--fast field.  Lemma~\ref{lem:frozen-slices} shows that every
phase projection used in this preparation preserves the frozen
\((\theta,\mu)\)-slices.

The normal dimensions are \((\dim x,\dim y)=(1,0)\) at the incoming
endpoint and \((0,1)\) at the outgoing endpoint.  Hence \(xy\equiv0\) in
both applications, and the adapted center equations are
\[
 \dot u_0=\eps,\qquad \dot v_0=0,\qquad \dot w_0=0.
\tag{3.10}
\]
This verifies the rectified-speed hypothesis in the General Exchange
Lemma, with transit exponent one.  If \(d_\mu=\dim\mu\), the center and
incoming-manifold dimensions are
\[
 m_{\rm S}=4+d_\mu,\qquad p_{\rm S}=2+d_\mu,\qquad
 m_{\rm S}-p_{\rm S}-1=1.
\tag{3.11}
\]
The last dimension is the scalar balance coordinate.  Omitting the frozen
label \(\theta\) would make it zero.

We now specify the two graph families to which the exchange result is
applied.  Define the section footpoints and reduced travel time by
\[
\begin{aligned}
 \widehat\Lambda_-(\theta,\mu)&=-\Lambda_0(-\theta,\mu),&
 \widehat\Lambda_+(s_{\rm out},\mu)&=-\Lambda_1(-s_{\rm out},\mu),\\
 \tau_1(\theta,\mu)
 &=\int_\theta^{\mathscr S(\theta,\mu)}
     \frac{\dd s}{\mathsf F(s,0,0,\mu)}.
\end{aligned}
\]
Displaying the fixed normal coordinate \(n=u_1\), the incoming and
time-reversed ending manifolds before the final
prepared charts are the images
\[
\begin{aligned}
 \mathcal N_-^\eps(\tau_0,\theta,\mu)
 &=\Bigl(u_1,
    \mathcal A(\widehat\Lambda_-(\theta,\mu),\!u_1,\!\eps,\!\mu),
    -\eps\log u_1,\tau_0,\theta,\mu\Bigr),\\
 \mathcal N_+^\eps(s_{\rm out},\theta,\mu)
 &=\Bigl(u_1,
    \mathcal A(\widehat\Lambda_+(s_{\rm out},\mu),\!u_1,\!\eps,\!\mu),
    -\eps\log u_1,\tau_1(\theta,\mu),\theta,\mu\Bigr),
\end{aligned}
\tag{3.11a}
\]
with \(s_{\rm out}\) near \(\mathscr S(\theta,\mu)\).  These are joint
graph families over \((\tau_0,\theta,\mu)\) and
\((s_{\rm out},\theta,\mu)\), respectively.  Their
images remain \(C^{r+1}\) after preparation.  At \(\eps=0\), the
layer-footpoint property of \(\mathcal A\) makes their first center
coordinates \(\theta\) and \(s_{\rm out}\).  The time-reversed application changes
the reduced center direction to \(-\mathsf K_0\) but does not change the
point coordinates in \(\mathcal N_+^\eps\).

In the center-variable order
\((\alpha,\xi,\tau,\theta,\mu)\), the limiting incoming projection and the
time-reversed outgoing projection have the forms
\[
\begin{aligned}
 P_-^0(\tau_0,\theta,\mu)
   &=(\theta,0,\tau_0,\theta,\mu),\\
 P_+^0(s_{\rm out},\theta,\mu)
   &=(s_{\rm out},0,\tau_1(\theta,\mu),\theta,\mu).
\end{aligned}
\tag{3.11b}
\]
Both are immersions.  The forward reduced center direction is
\[
 \mathsf K_0=(\mathsf F,-\mathsf G,1,0_\theta,0_\mu).
\tag{3.11c}
\]
The outgoing application uses \(-\mathsf K_0\).

Here are Schecter's \textup{(E1)}--\textup{(E11)} with their actual objects;
this also fixes the dimension bookkeeping in both endpoint applications.
\begingroup
\ifsiadsreview\small\fi
\begin{enumerate}[label=\textup{(E\arabic*)},leftmargin=*,
                  itemsep=0pt,topsep=1pt,parsep=0pt]
\item The stable block is \(x=n\), \(k=1\); after time reversal at the
outgoing endpoint its rate is again at most \(-\nu_{\rm n}\).  The unstable
block is absent, \(y\in\mathbb R^0\), \(l=0\).  In the rectified prepared
center coordinates the center solution operator is the identity.
\item Take
\(\lambda_{\rm S}=-\nu_{\rm n}/2\),
\(\mu_{\rm S}=\nu_{\rm n}/4\), and choose
\(\beta_0>0\) so that \(r\beta_0<\nu_{\rm n}/4\) and
\(\max\{6,2r+1\}\beta_0<\nu_{\rm n}/4\).
\item The unions over \(\eps\) of the displayed
\(\mathcal N_-^\eps\) and \(\mathcal N_+^\eps\) are \(C^{r+1}\)
manifolds, uniformly over the compact frozen-label block.
\item This condition is automatic because \(l=0\), so \(y=0\) is the
entire prepared phase space in the absent unstable block.
\item The maps \(P_-^0\) and \(P_+^0\) in \((3.11\mathrm b)\) are
immersions; hence no nonzero tangent vector is annihilated by the center
projection.
\item In the adapted starting coordinates, \(\dot w_0=0\) exactly.
\item Since \(\dot u_0=\eps\), the transit exponent is \(a=1\), and the
constant \(K_3\) may be chosen below \(1\).
\item The adapted starting chart and the transformed vector field are joint
\(C^{r+1}\).
\item In the ending chart,
\(v^1\in\mathbb R^{p_{\rm S}+1}=\mathbb R^{3+d_\mu}\) and
\(w^1\in\mathbb R\); the flowed center projection is \(w^1=0\).
\item The ending chart and its transformed field are joint \(C^{r+1}\);
at the repelling endpoint these are constructed for the reversed system.
\item On \(w^1=0\), \(\dot u_0=\eps\) gives transit time
\(u^1/\eps\).  Choose
\(\beta_0<\beta<\beta_1\) so that
\(\mu_{\rm S}-\max\{6,2r+1\}\beta_1>0\), and shrink the ending
\(u^1\)-window \([K_1,K_2]\) so that
\(1<K_2/K_1<\beta_1/\beta\), uniformly in \((\theta,\mu)\).
\end{enumerate}
At the incoming projection \(\mathsf K_0\) has positive
\(\xi\)-component; at the outgoing projection the reversed vector
\(-\mathsf K_0\) does as well.  This reduced-flow transversality constructs
the relative flow boxes used in \textup{(E6)}--\textup{(E9)}.  It is
separate from \textup{(E5)}, which is the injectivity of center projection.
Thus all hypotheses of Corollary~\ref{cor:augmented-exchange} hold for the
two displayed families with one uniform choice of neighborhoods and
constants.
\endgroup

We now construct the full middle intersection.  Choose the fixed section
\(\Sigma_{\rm m}=\{s=s_{\rm m}\}\), with \(s_{\rm m}=0\), after shrinking
the compact label block so that every incoming endpoint lies to its left
and every balanced outgoing endpoint lies to its right.  Between fixed
small endpoint neighborhoods, the limiting exponential coordinate satisfies
\(\xi\ge2\nu>0\).  The exchange estimates are uniform, so the perturbed
middle pieces satisfy \(\xi\ge\nu\).

Apply Lemma~\ref{lem:exchange-to-trace} first to the forward exchanged
family issued from \(\mathcal N_-^\eps\), with free endpoint variable
\(\tau_0\), and then to the reversed outgoing family issued from
\(\mathcal N_+^\eps\), with free endpoint variable \(s_{\rm out}\).
The ending sections are chosen inside the two compact endpoint tubes.
The reduced vectors \(\mathsf K_0\) and \(-\mathsf K_0\) have uniformly
positive \(\xi\)-components there by \((3.11\mathrm c)\), so the required
endpoint intersections are transverse.  Shrinking once more, the regular
segments from those sections to \(\Sigma_{\rm m}\) stay in the strip
\(\xi\ge\nu\).  Since \(A_0=\mathsf F>0\), the \(s\)-coordinate is
strictly monotone in the incoming orientation and in the opposite direction
for the reversed outgoing field.  Hence each family meets every relevant
intermediate section \(s=\text{constant}\), in particular
\(\Sigma_{\rm m}\), exactly once.

The endpoint families in \((3.11\mathrm a)\) lie on the invariant sheet
\((3.0\mathrm c)\).  In slow time \(\sigma=\eps t\), its physical
middle equations are
\[
\begin{aligned}
 \frac{\dd s}{\dd\sigma}
 &=\mathsf F(s,N,\eps,\mu)
   +\eps^{-1}N\mathsf H(s,N,\eps,\mu)=:A_\eps,\\
 \frac{\dd\xi}{\dd\sigma}
 &=-\mathsf G(s,N,\eps,\mu),\qquad
 \frac{\dd\tau}{\dd\sigma}=1.
\end{aligned}
\tag{3.12}
\]
Lemma~\ref{lem:exp-flatness} gives a joint \(C^r\) extension on
\(\xi\ge\nu\), with
\(A_0=\mathsf F(s,0,0,\mu)>0\).  We may therefore use \(s\) as independent
variable:
\[
 \frac{\dd\xi}{\dd s}=-\frac{\mathsf G}{A_\eps},
 \qquad
 \frac{\dd\tau}{\dd s}=\frac1{A_\eps}.
\tag{3.13}
\]
Consequently Lemma~\ref{lem:exchange-to-trace} transports the two exchanged
families to \(\Sigma_{\rm m}\) as the joint \(C^r\) traces
\[
 \Gamma^-_\eps=(\Xi^-_\eps,T^-_\eps),\qquad
 \Gamma^+_\eps=(\Xi^+_\eps,T^+_\eps),
\tag{3.14}
\]
parameterized respectively by \((\tau_0,\theta,\mu)\) and
\((s_{\rm out},\theta,\mu)\).

At \(\eps=0\), integration of \((3.13)\) gives, as identities on fixed
neighborhoods,
\[
\begin{aligned}
 \Gamma^-_0(\tau_0,\theta,\mu)
 &=
 \left(
 -\int_\theta^{s_{\rm m}}\frac{\mathsf G}{\mathsf F}\,\dd s,\,
 \tau_0+\int_\theta^{s_{\rm m}}\frac{\dd s}{\mathsf F}
 \right),\\
 \Gamma^+_0(s_{\rm out},\theta,\mu)
 &=
 \left(
 \int_{s_{\rm m}}^{s_{\rm out}}\frac{\mathsf G}{\mathsf F}\,\dd s,\,
 \tau_1(\theta,\mu)
 -\int_{s_{\rm m}}^{s_{\rm out}}\frac{\dd s}{\mathsf F}
 \right),
\end{aligned}
\tag{3.15}
\]
where the integrands are evaluated at \((s,0,0,\mu)\).  Thus the second
block of the explicit map \(\mathfrak M_0\) in \((3.0\mathrm d)\) is
\[
 \begin{pmatrix}
  \displaystyle\int_\theta^{s_{\rm out}}
        \frac{\mathsf G}{\mathsf F}\,\dd s\\[2mm]
  \displaystyle\tau_0+\int_\theta^{s_{\rm out}}
        \frac{\dd s}{\mathsf F}-\tau_1(\theta,\mu)
 \end{pmatrix}.
\tag{3.16}
\]
Equation~\((3.4)\) and the definition of \(\tau_1\) show that
\[
 s_{\rm out}=\mathscr S(\theta,\mu),\qquad \tau_0=0
\]
is its singular zero.  Its Jacobian in
\((s_{\rm out},\tau_0)\) equals
\[
 \begin{pmatrix}
  \mathsf G/\mathsf F&0\\
  1/\mathsf F&1
 \end{pmatrix}_{s=\mathscr S(\theta,\mu)},
\qquad
 \det=\left.\frac{\mathsf G}{\mathsf F}
 \right|_{s=\mathscr S(\theta,\mu)}>0,
\tag{3.17}
\]
with a common positive lower bound.  Lemma~\ref{lem:middle-clock} now gives
the unique full augmented intersection jointly in
\((\theta,\eps,\mu)\).  In particular, there is no suppressed normal
equation: equality of \(\xi\) on the invariant physical sheet forces
equality of \(n\).

Finally restrict the frozen endpoint label to
\[
 \theta=-\pi_0(v,\mu)
\]
and project the matched orbit away from the auxiliary variables to the
physical phase variables.  Since \(\tau\) is a decoupled clock,
Lemma~\ref{lem:middle-clock} permits the projected orbit to be represented,
after this projection, by the gauge choice \(\tau_0=0\).  At \(\eps=0\),
the incoming layer, balanced critical segment, and outgoing layer give respectively
\(\pi_0\), \(\mathcal B_\mu\), and \(\pi_1^{-1}\).  This proves the joint
extension and formula \((2.6)\).
\end{proof}

\begin{remark}
Theorem~\ref{thm:entryexit} does not cover a reduced slow equilibrium, a
higher-order normal factor, loss of normal hyperbolicity at an endpoint, or
a general critical curve before an exact reduction to \((2.1)\).
Degenerate turning-point problems in which the reduced slow flow itself has
an equilibrium require a different blow-up and entry--exit relation; see
\cite{HuzakKristiansen2025}.
For the stricter subclass \(h_0\equiv0\), parameter-dependent
\(C^\infty\) passage smoothness also follows directly from
Corollary~1.2 and Remark~1 of De Maesschalck--Schecter
\cite{DeMaesschalckSchecter2016}.
\end{remark}

\section{Continuous quadratic grazing}

This section supplies the second local module.  The \(3/2\) exponent is the
classical continuous-grazing scale; the proposition records the precise
moving-coordinate coefficient and the joint derivative control required by
the later composition.

\siadsneedspace{.42\textheight}
\begin{proposition}[Continuous-grazing \(3/2\) transition]
\label{prop:grazing}
Under \((2.7)\)--\((2.10)\), let \(\mathcal D(\rho,p)\) be the outgoing
\(I_p\)-coordinate of the physical piecewise-smooth excursion minus that
of the reference \(X_p^-\)-excursion, for the same incoming point.  Then
\[
 \mathcal D(\rho,p)=
 \begin{cases}
 0,&\rho\le0,\\
 \rho^{3/2}\mathcal A_{\rm gr}(\sqrt\rho,p),&\rho\ge0,
 \end{cases}
\tag{4.1}
\]
where \(\mathcal A_{\rm gr}(r,p)\) is smooth for \(r\ge0\), jointly with
\(p\), and
\[
 \mathcal A_{\rm gr}(0,p)
 =\frac{4\sqrt2}{3\sqrt{\kappa(p)}}\,\beta(p).
\tag{4.2}
\]
Equivalently, put \(C(p):=\mathcal A_{\rm gr}(0,p)\).  Then, for a smooth
\(B\),
\[
 \mathcal D(\rho,p)
 =C(p)\rho_+^{3/2}+\rho_+^2B(\sqrt{\rho_+},p)
\tag{4.3}
\]
with all prescribed finite ramified derivatives bounded uniformly.
\end{proposition}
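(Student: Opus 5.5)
I plan to work in flow-box coordinates for the reference field, exactly as in Figure~\ref{fig:geometry}(b). Near \(z_p\), the conditions \(X_p^-(z_p)\ne0\) and (2.10) give smooth coordinates \((t,I)\), jointly in \(p\), with \(X_p^-=\partial_t\), \(I=I_p\), and \(z_p=(0,0)\). The sections \(\Gamma_{\rm in}=\{t=-t_0\}\) and \(\Gamma_{\rm out}=\{t=t_0\}\) are fixed. Since \(dI_p(z_p)=dh_p(z_p)\), \(X_p^-h_p(z_p)=0\) and \(\kappa(p)>0\), the switching function satisfies \(h_p(t,I)=\omega(t,I,p)\,(I-\phi(t,I,p))\) with \(\omega(0,0,p)=1\).

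The first step is to solve for the switching curve. By the implicit function theorem, \(\Sigma_p\) is a graph \(I=\psi_p(t)\) with \(\psi_p(0)=0\), \(\psi_p'(0)=0\) and \(\psi_p''(0)=\kappa(p)\) after normalization. On the level \(I=\rho\), the reference orbit meets \(\Sigma_p\) only if \(\rho\ge0\). For \(\rho>0\) it meets \(\Sigma_p\) at two times \(t_\pm(\rho,p)\). Writing \(\rho=\sigma^2\), the equation \(\psi_p(t)=\sigma^2\) factors as \(t\,\tilde\psi(t)=\pm\sigma\) with \(\tilde\psi(0)=\sqrt{\kappa/2}\). The implicit function theorem then gives \(t_\pm=\pm\sqrt{2/\kappa}\,\sigma+O(\sigma^2)\), smooth in \((\sigma,p)\) for \(\sigma\ge0\). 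For \(\rho\le0\) the orbit stays on \(h_p\le0\), so \(\mathcal D=0\).

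The second step integrates the plus field across the lens. In the same coordinates \(X_p^+=\partial_t+h_pA_p\). The physical orbit agrees with the reference orbit up to \(t_-\), then follows \(X_p^+\) until it re-hits \(\Sigma_p\), and then follows \(\partial_t\), which preserves \(I\). Therefore \(\mathcal D\) equals the \(I\)-increment across the plus-excursion.

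I will rescale \(t=\sigma\tau\) and \(I=\sigma^2 J\). Then \(h_p=\sigma^2(J-\kappa\tau^2/2)+O(\sigma^3)\), and the plus system becomes
\[
\frac{\dd J}{\dd\tau}=\sigma\bigl(J-\kappa\tau^2/2\bigr)\beta(p)+O(\sigma^2),\qquad \frac{\dd\tau}{\dd\tau}=1 .
\]
This is a regular perturbation smooth in \(\sigma\ge0\), starting at \(J=1\), \(\tau=\tau_-=-\sqrt{2/\kappa}\). Here \(\beta=dh_p(A_p)\) supplies the \(I\)-component of \(A_p\) at \(z_p\).

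The exit point is the second root of \(J-\psi/\sigma^2=0\) near \(\tau_+\). That root is a transversal zero, since the derivative is \(-\kappa\tau_+\ne0\). Hence the exit time and the exit value of \(J\) are smooth in \((\sigma,p)\), and \(J_{\rm exit}-1=\sigma\,\mathcal J(\sigma,p)\). This gives \(\mathcal D=\sigma^3\mathcal J=\rho^{3/2}\mathcal A_{\rm gr}(\sqrt\rho,p)\), which is (4.1).

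At \(\sigma=0\) the leading term is
\[
\mathcal J(0,p)=\beta\int_{-\sqrt{2/\kappa}}^{\sqrt{2/\kappa}}\Bigl(1-\frac{\kappa\tau^2}{2}\Bigr)\dd\tau=\beta\cdot\frac{4}{3}\sqrt{\frac{2}{\kappa}} ,
\]
which is (4.2). Formula (4.3) then follows by Taylor expansion in \(\sigma\). The derivative bounds follow because all constructions are smooth on compact \((\sigma,p)\)-sets.

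I expect the main obstacle to be the bookkeeping of the normalizations. Three points need care: the choice \(I_p\) versus \(h_p\) together with the factor \(\omega\); the sign convention for \(\kappa\), which enters \(\psi\) with \(+\kappa t^2/2\) in \(I\) because \(h=I-\kappa t^2/2\); and the claim that the \(O(\sigma^2)\) remainders are genuinely smooth rather than merely bounded.

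To handle these, I will first verify \(\omega(0,0,p)=1\) and show that the \(J\)-component of \(A_p\) at the origin equals \(\beta(p)\) after this normalization. I will then write the rescaled vector field explicitly as a smooth function of \((\tau,J,\sigma,p)\) via Hadamard's lemma applied to \(h_p(\sigma\tau,\sigma^2J)/\sigma^2\).
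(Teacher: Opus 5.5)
Your proposal is correct and follows essentially the same route as the paper: flow-box coordinates for \(X_p^-\) with \(X_p^+=X_p^-+h_pA_p\), and the parabolic rescaling \(t=\sigma\tau\). Your \(I=\sigma^2J\) is the paper's \(I=r^2+r^3w\) with \(J=1+\sigma w\). The remaining steps also match: a regular perturbation smooth in \(\sigma\ge0\), the implicit-function theorem at the simple outgoing root near \(\tau_+=\sqrt{2/\kappa(p)}\), and the same leading integral \(\beta(p)\int(1-\kappa(p)\tau^2/2)\dd\tau\). The only difference is cosmetic: you locate the crossing times through the graph \(I=\psi_p(t)\) of \(\Sigma_p\) instead of the scaled switching equation, which also makes it immediate that the post-exit \(X_p^-\) segment stays in \(h_p\le0\).
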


\begin{proof}
A parameter-dependent \(X_p^-\)-flow box gives coordinates \((t,I)\) in
which \(X_p^-=\partial_t\), the grazing orbit is \(I=0\), and
\[
 h_p(t,I)=I-\frac{\kappa(p)}2t^2
 +O(|t|^3+|tI|+I^2).
\tag{4.4}
\]
Hadamard factorization is joint in \(p\), and in these coordinates
\[
 \dot t=1+h_pa_p(t,I),\qquad
 \dot I=h_pb_p(t,I),\qquad b_p(0,0)=\beta(p).
\tag{4.5}
\]
For a penetrating orbit put \(I_{\rm in}=r^2\) and \(t=ru\), \(r\ge0\).
Taylor division gives a smooth function
\[
 \widehat h(r,u,w,p)
 :=r^{-2}h_p(ru,r^2+r^3w)
 =1-\frac{\kappa(p)}2u^2+r\widehat H(r,u,w,p).
\tag{4.5a}
\]
For \(w=0\), the switching equation has a smooth incoming root
\(u_-(r,p)\), whose limiting value, together with the prospective outgoing
value, is
\[
 u_\pm(0,p)=\pm\sqrt{2/\kappa(p)}.
\]
On the plus excursion write \(I=r^2+r^3w(u)\).  Dividing
\(\dd I/\dd t=h_pb_p/(1+h_pa_p)\) by \(r^2\) yields the explicit scaled
equation
\[
 \frac{\dd w}{\dd u}
 =\frac{\widehat h(r,u,w,p)\,
 b_p(ru,r^2+r^3w)}
 {1+r^2\widehat h(r,u,w,p)\,
 a_p(ru,r^2+r^3w)},
 \qquad w(u_-(r,p))=0.
\tag{4.5b}
\]
Its right-hand side is smooth up to \(r=0\), uniformly on a common
\(u\)-interval.  At \(r=0\), the switching function along this solution is
\(1-\kappa(p)u^2/2\), whose positive zero is simple.  The smooth ODE
theorem followed by the implicit-function theorem therefore gives a smooth
outgoing root \(\widetilde u_+(r,p)\) of
\(
 \widehat h(r,u,w(u;r,p),p)=0
\), jointly in \((r,p)\).  Hence
\[
 I_{\rm out}-I_{\rm in}
 =r^3w(\widetilde u_+(r,p);r,p)
 =r^3\mathcal A_{\rm gr}(r,p),
\]
with \(\mathcal A_{\rm gr}\) smooth.
At \(r=0\),
\[
\begin{aligned}
 \mathcal A_{\rm gr}(0,p)
 &=\beta(p)
 \int_{-\sqrt{2/\kappa(p)}}^{\sqrt{2/\kappa(p)}}
 \left(1-\frac{\kappa(p)}2u^2\right)\,\dd u\\
 &=\frac{4\sqrt2}{3\sqrt{\kappa(p)}}\,\beta(p).
\end{aligned}
\]
The reference orbit does not enter the plus side for \(\rho\le0\), so its
correction is zero there.  This proves \((4.1)\)--\((4.3)\).
\end{proof}

\begin{remark}[Jet formula]
\label{rem:jet}
In coordinates \(h=y\), write \(X^\pm=(f^\pm,g^\pm)\), and suppose at the
grazing point
\[
 f^\pm(0,0)=v\ne0,\qquad g^\pm(0,0)=0.
\]
Then
\[
 \kappa=-v\,\partial_xg^-(0,0),\qquad
 \beta=\partial_yg^+(0,0)-\partial_yg^-(0,0).
\tag{4.6}
\]
For smooth one-sided branches, trace continuity makes \(3/2\) the first
possible generic nonsmooth order.  Its coefficient is nonzero precisely
when \(\beta\ne0\); continuity alone does not force this condition.  A jump
only in the normal derivative of the tangential velocity can therefore
cancel at this order.
\end{remark}

\section{Moving-threshold return composition}

The inputs are the physical passage from Section~3, the local grazing
transition from Section~4, and the regular tubes in \textup{(REG)}.  The
output is the structural return germ used by every subsequent zero-counting
argument.  The coordinate must move with the grazing graph; the two remarks
after the proof explain why a fixed limiting coordinate would not preserve
the required derivative information.

\begin{lemma}[Moving-threshold return composition]
\label{lem:composition}
Under \textup{(EE)}, \textup{(GR)}, \textup{(REG)}, and \textup{(BAL)},
the displacement has the form
\((2.15)\), and its leading coefficient is \((2.16)\).
\end{lemma}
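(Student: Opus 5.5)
We will prove Lemma~\ref{lem:composition} by writing the physical and reference returns as compositions of the modules in the itinerary \eqref{eq:itinerary}. They agree on every module except the grazing box. Cutting the circuit at the grazing box, let \(G_p\) denote the composite from \(\Pi\) through the entry--exit block and the pre-grazing regular tube to the incoming section \(\Gamma_{\rm in}\), read in the \(I_p\)-coordinate. By (REG), \(I_p\circ G_p=Q_p\); that is, the penetration coordinate \(q\) is exactly the incoming first-integral level. Let \(E_p\) denote the reference \(X_p^-\)-excursion through the box; it preserves \(I_p\) because \(X_p^-I_p=0\). Let \(T_p\) denote the regular return tube from \(\Gamma_{\rm out}\) back to \(\Pi\), written as a map from the outgoing \(I_p\)-level, followed by conjugation by \(Q_p\). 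Call the resulting map \(\mathcal L_p(I)\), which goes from the outgoing level to the \(q\)-output. Then
\[
 \widehat{\mathcal R}^{\rm ref}_p(q)=\mathcal L_p(q),\qquad
 \widehat{\mathcal R}_p(q)=\mathcal L_p\bigl(q+\mathcal D(q,p)\bigr),
\]
where \(\mathcal D\) is the correction of Proposition~\ref{prop:grazing} evaluated at incoming level \(\rho=q\). The physical return enters the box at the same point as the reference return, and outside the box both use the same field. For \(\eps>0\) both identities are literal. At \(\eps=0\) they hold for the extensions, since Theorem~\ref{thm:entryexit} gives a joint \(C^r\) extension of the entry--exit module and the other modules are \(\eps\)-regular.

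Regularity comes next. We invoke Theorem~\ref{thm:entryexit} with \(r=\ell+1\), as in Remark~\ref{rem:regularity}. Then \(G_p\), \(Q_p\), \(Q_p^{-1}\) (using \(\partial_\eta\rho(0,0)\ne0\)) and \(\mathcal L_p\) are jointly \(C^{\ell+1}\), so \(S(q,p)=\mathcal L_p(q)-q\) is \(C^{\ell+1}\). The crucial point is that the threshold is \(q=0\) exactly, for every \(p\), because \(q\) is the moving penetration coordinate. Hence the one-sided structure of \(\mathcal D\) is not smeared by a \(p\)-dependent shift, which would destroy joint smoothness in \((\sqrt{q_+},p)\). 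Subtracting the two expressions gives, for \(q\le0\), \(\Delta=S\), and for \(q\ge0\),
\[
 \Delta-S=\mathcal L_p(q+\mathcal D)-\mathcal L_p(q)
 =\mathcal D\int_0^1\mathcal L_p'(q+\theta\mathcal D)\,\dd\theta .
\]
Substitute \(q=s^2\) and \(\mathcal D=s^3\mathcal A_{\rm gr}(s,p)\) from (4.1), and define
\[
 K(s,q,p)=\mathcal A_{\rm gr}(s,p)\int_0^1\mathcal L_p'\bigl(q+\theta s^3\mathcal A_{\rm gr}(s,p)\bigr)\dd\theta .
\]
Because \(\mathcal L_p'\) is \(C^{\ell}\) and \(\mathcal A_{\rm gr}\) is smooth for \(s\ge0\), \(K\) is jointly \(C^\ell\) for \(s\ge0\); this is where the one derivative is lost. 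Evaluating at the base point gives \(K(0,0,0)=\mathcal A_{\rm gr}(0,0)\mathcal L_0'(0)\). With \(L_0:=\mathcal L_0'(0)\), formula (4.2) yields (2.16).

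It remains to show \(d\ne0\). We have \(\beta(0)\ne0\) and \(\kappa(0)>0\). Also \(L_0\ne0\), because \(\mathcal L_p\) is a composition of diffeomorphism germs between transverse sections: the regular tube (REG), the normalized \(I_p\) chart with \(dI_p(z_p)=dh_p(z_p)\ne0\), and \(Q_p\). In fact (BAL) gives \(\mathcal L_0'(0)=1+S_q(0,0)=1\), which gives \(d\) explicitly.

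The main obstacle is the bookkeeping of Proposition~\ref{prop:grazing}'s coordinates against those of (REG). Proposition~\ref{prop:grazing} defines \(\mathcal D\) on fixed sections \(\Gamma_{\rm in/out}\) in a \(p\)-dependent flow box. We must check that its incoming \(I_p\)-value is exactly \(Q_p(\eta)\), with no extra reparametrization of \(\rho\). We must also check that the \(p\)-dependence of the flow box is jointly smooth, so that uniform derivative bounds on \(B\) in (4.3) transfer through the composition. We would settle this first by fixing \(\Gamma_{\rm in},\Gamma_{\rm out}\) as level sets of the flow-box time \(t\) and using \(I_p\) itself as the section coordinate on both.
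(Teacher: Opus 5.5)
Your proposal is correct and follows the paper's proof. You invoke Theorem~\ref{thm:entryexit} with \(r=\ell+1\), write \(\Delta-S\) as the \(C^{\ell+1}\) post-grazing map evaluated at the reference outgoing level plus \(s^3\mathcal A_{\rm gr}\) minus its value at that level, and define \(K\) by the integral mean-value formula; your \(K\) is (5.2a) with \(u_0(q,p)=q\), since \(X_p^-I_p=0\). Your extra remark that \(\widehat{\mathcal R}^{\rm ref}_p=\mathcal L_p\), so that (BAL) forces \(L_0=1+S_q(0,0)=1\), is also valid and sharper than the paper's general argument, which only proves \(L_0\neq0\) via (5.3)--(5.4), although \(L_0=1\) is what the paper finds in both of its examples.
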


\begin{proof}
Invoke Theorem~\ref{thm:entryexit} with \(r=\ell+1\).  All remaining
reference transitions are regular, so the pre-grazing map, \(Q_p\),
\(Q_p^{-1}\), the post-grazing map, and the reference return are jointly
\(C^{\ell+1}\).  In particular \(S\) is jointly \(C^{\ell+1}\).

Parameterize the outgoing grazing section by the normalized \(I_p\)
coordinate \(u\).  Let \(u_0(q,p)\) be the reference outgoing coordinate
and let \(R_p\) be the composition of all maps from that section to the
final \(q\)-coordinate.  Proposition~\ref{prop:grazing} supplies
\[
 \mathcal D(q,p)=q_+^{3/2}\mathcal C(\sqrt{q_+},p),
\qquad
 \mathcal C(0,p)=
 \frac{4\sqrt2}{3\sqrt{\kappa(p)}}\,\beta(p).
\tag{5.1}
\]
Regard \(s\ge0\) and \(q\) as independent in the coefficient chart.  On
the physical penetrating side \(q=s^2\), and
\[
\begin{aligned}
 &R_p\!\left(u_0(q,p)+s^3\mathcal C(s,p)\right)-R_p(u_0(q,p))\\
 &\quad=s^3\mathcal C(s,p)
 \int_0^1
 \partial_uR_p\!\left(u_0(q,p)+t\,s^3\mathcal C(s,p)\right)\,\dd t .
\end{aligned}
\tag{5.2}
\]
Hence the coefficient in \((2.15)\) is explicitly
\[
 K(s,q,p)=\mathcal C(s,p)\int_0^1
 \partial_uR_p\!\left(
 u_0(q,p)+t\,s^3\mathcal C(s,p)
 \right)\,\dd t .
\tag{5.2a}
\]
It is \(C^\ell\) because \(R_p\) is \(C^{\ell+1}\).
This proves \((2.15)\).

Let \(T_0\) be the post-grazing transition to the original \(\eta\)
coordinate.  Reference closure gives \(T_0(0)=0\).  If
\(X_{{\rm reg},p}\) denotes the smooth vector field along this regular
tube, the scalar section-map formula is
\[
 T_p'(u)=J_{\rm sec}(u,p)
 \exp\!\left(
 \int_0^{\tau(u,p)}
 \operatorname{div}X_{{\rm reg},p}(\varphi_p^t(u))\,\dd t
 \right),
\tag{5.3}
\]
where the transverse flux factor \(J_{\rm sec}\) is nonzero.  Bounded
flight time and uniform transversality show that \(T_0'(0)\ne0\).
Consequently
\[
 L_0
 =\left.\partial_u[Q_0(T_0(u))]\right|_{u=0}
 =\partial_\eta Q_0(0)\,T_0'(0)\ne0.
\tag{5.4}
\]
Evaluating \((5.2)\) at the origin gives \((2.16)\).
\end{proof}

\begin{remark}[Why the coordinate moves]
\label{rem:moving}
The implicit-function theorem gives a unique grazing graph
\(\eta=\eta_{\rm g}(p)\) satisfying
\(\rho(\eta_{\rm g}(p),p)=0\).  If, for example,
\(\rho(\eta,\eps)=\eta-\gamma\eps+o(\eps)\), then
\[
 (\eta-\gamma\eps)_+^{3/2}-\eta_+^{3/2}
\]
is of leading order when \(\eta\asymp\eps\).  Formula \((2.15)\) is
uniform through such regimes precisely because \(q=\rho(\eta,p)\) is the
exact moving penetration.
\end{remark}

\begin{remark}[Conjugacy versus pullback]
\label{rem:pullback}
If one merely pulls the old displacement back by \(Q_p^{-1}\), obtaining
\(\Delta^{\rm pb}\), then the actual conjugated displacement is
\[
 \Delta(q,p)=M(q,p)\Delta^{\rm pb}(q,p),
\quad
 M(q,p)=\int_0^1\partial_\eta Q_p\!
 \left(Q_p^{-1}(q)+t\Delta^{\rm pb}(q,p)\right)\,\dd t.
\tag{5.5}
\]
Thus the output-coordinate derivative is part of \(L_0\); omitting it can
change the reported coefficient.
\end{remark}

\section{Sharp zero count and rank-two unfolding}

This section turns the return germ from Section~5 into the cyclicity theorem.
Throughout, \(\ell\ge2\) is the fixed integer in
Theorem~\ref{thm:main}, and \(S,K\) denote the corresponding
representatives.
Its decisive input is derivative-level control on each side of the moving
threshold, rather than a value expansion alone.  The first lemma supplies
the two one-sided monotonicities, the next proposition counts zeros, and the
last proposition proves that the applicable bound is attained in each sign
class.

Subtract the constant and linear smooth jets:
\[
 S_2(q,p)=S(q,p)-a(p)-b(p)q.
\tag{6.1}
\]
Then
\[
 \Delta(q,p)
 =a(p)+b(p)q+S_2(q,p)
 +q_+^{3/2}K(\sqrt{q_+},q,p),
\tag{6.2}
\]
where \(S_2(0,p)=S_{2,q}(0,p)=0\).

\begin{lemma}[One-sided monotonicity of the return derivative]
\label{lem:curvature}
If \(cd\ne0\), then, uniformly as \((q,p)\to(0,0)\) with \(q<0\),
\[
 \Delta_{qq}(q,p)=S_{qq}(q,p)\longrightarrow2c.
\tag{6.3}
\]
For \(q=s^2>0\), define the ramified restriction
\[
 H(s,p)=K(s,s^2,p),
\]
and interpret \(H_s,H_{ss}\) as total \(s\)-derivatives.  Then
\[
\begin{aligned}
 J(s,p)
 &:=\partial_s[\Delta_q(s^2,p)]\\
 &=2sS_{qq}(s^2,p)
 +\frac32H(s,p)+\frac52sH_s(s,p)
 +\frac12s^2H_{ss}(s,p)
 \longrightarrow\frac32d
\end{aligned}
\tag{6.4}
\]
uniformly as \((s,p)\to(0,0)\).
\end{lemma}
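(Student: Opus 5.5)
The plan is a direct computation from the representation (6.2) supplied by Lemma~\ref{lem:composition}, using only the regularity recorded in Theorem~\ref{thm:main}(i): \(S\) is jointly \(C^{\ell+1}\) with \(\ell\ge2\), hence \(S_{qq}\) is jointly continuous, and \(K(s,q,p)\) is jointly \(C^\ell\) on \(s\ge0\), hence \(H(s,p)=K(s,s^2,p)\) is jointly \(C^2\) in \((s,p)\) for \(s\ge0\).

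\emph{Negative side.} For \(q<0\) we have \(q_+=0\), and the ramified term \(q_+^{3/2}K\) vanishes identically on the open half-neighborhood. Therefore \(\Delta=S\) there, and \(\Delta_{qq}=S_{qq}\) for \(q<0\). Joint continuity of \(S_{qq}\) at \((0,0)\), together with \(S_{qq}(0,0)=2c\) from (BAL), gives (6.3) uniformly.

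\emph{Positive side.} Here I would work throughout in the ramified variable \(s\), to avoid differentiating \(K\) in its first slot through \(\sqrt q\). For \(q=s^2>0\), set
\[
\Phi(s,p)=\Delta(s^2,p)=S(s^2,p)+s^3H(s,p).
\]
Then \(\Delta_q(s^2,p)=\Phi_s/(2s)\). Differentiating gives
\[
\Phi_s=2sS_q(s^2,p)+3s^2H+s^3H_s,
\]
so
\[
\Delta_q(s^2,p)=S_q(s^2,p)+\tfrac32sH+\tfrac12s^2H_s.
\]
One more total \(s\)-derivative yields
\[
J=2sS_{qq}(s^2,p)+\tfrac32H+\tfrac32sH_s+sH_s+\tfrac12s^2H_{ss},
\]
which is exactly (6.4). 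A point to verify is that the chain rule is legitimate: \(\Delta\) is \(C^1\) in \(q\) on \(q>0\) and \(s\mapsto s^2\) is a diffeomorphism for \(s>0\), so \(\Delta_q(s^2,p)\) is well defined and \(C^1\) in \(s\).

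\emph{Limit.} Each term other than \(\tfrac32H\) is a bounded continuous function multiplied by \(s\) or \(s^2\). Specifically, \(S_{qq}\), \(H_s\) and \(H_{ss}\) are continuous on a compact neighborhood, hence uniformly bounded, so these terms tend to \(0\) uniformly as \((s,p)\to(0,0)\). The remaining term satisfies \(\tfrac32H(s,p)\to\tfrac32K(0,0,0)=\tfrac32d\) by joint continuity of \(K\).

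The only genuinely delicate step is confirming that \(H_{ss}\) exists and is bounded. This requires two total derivatives of \(K(s,s^2,p)\), which involve \(K_{ss}\), \(K_{sq}\), \(K_{qq}\) and \(K_q\), and hence need \(K\in C^2\). The hypothesis \(\ell\ge2\) provides exactly this, so the argument uses the one-derivative loss budget recorded in Remark~\ref{rem:regularity}. If one only had \(\ell=1\), one would instead keep the form \(J=2sS_{qq}+\tfrac32H+O(s)\) via difference quotients. With \(\ell\ge2\), that refinement is unnecessary.
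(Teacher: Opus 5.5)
Your proposal is correct and follows the paper's own argument: on \(q<0\) the ramified term vanishes, so \(\Delta=S\) and \((6.3)\) follows from continuity of \(S_{qq}\); on \(q=s^2>0\) you write the fractional term as \(s^3H(s,p)\), differentiate using \(\partial_q=(2s)^{-1}\partial_s\), and get the limit from joint \(C^2\) regularity of \(H\), which holds because \(K\in C^\ell\) with \(\ell\ge2\). Your version only adds detail the paper leaves implicit, namely the intermediate formula \(\Delta_q(s^2,p)=S_q+\tfrac32 sH+\tfrac12 s^2H_s\) and the uniform boundedness of \(S_{qq}\), \(H_s\), and \(H_{ss}\) that makes the \(s\)- and \(s^2\)-weighted terms vanish in the limit.
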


\begin{proof}
Equation \((6.3)\) is ordinary \(C^2\) continuity of \(S\).
For \(s>0\), the fractional term restricted to the ramified chart is
\(s^3H(s,p)\).  Since \(\partial_q=(2s)^{-1}\partial_s\),
differentiate once to obtain \((6.4)\).  Joint \(C^2\) regularity in the
ramified variables gives the stated limit.
\end{proof}

\begin{proposition}[Sign-sharp local zero count]
\label{prop:zeros}
Let \(\Delta\) satisfy \((6.2)\) and the estimates in
Lemma~\ref{lem:curvature}.  There are a fixed \(\delta>0\) and a parameter
neighborhood such that
\[
 \#\{q\in(-\delta,\delta):\Delta(q,p)=0\}
 \le
 \begin{cases}
 2,&cd>0,\\
 3,&cd<0.
 \end{cases}
\tag{6.5}
\]
No unfolding-rank assumption is needed for this upper bound.
\end{proposition}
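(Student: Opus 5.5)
By Lemma~\ref{lem:curvature}, choose \(\delta>0\) and a parameter neighborhood \(P_0\) such that, for all \(p\in P_0\),
\[
 \sgn\Delta_{qq}(q,p)=\sgn c\ \ (-\delta<q<0),
 \qquad
 \sgn J(s,p)=\sgn d\ \ (0<s<\sqrt\delta).
\]
The proof then reduces to two Rolle arguments, one on each side of the moving threshold \(q=0\).

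\textbf{Monotonicity of \(\Delta_q\) on each side.} On \((-\delta,0)\) the second derivative of \(\Delta\) has a fixed sign, so \(\Delta_q\) is strictly monotone there. On \((0,\delta)\) we pass to the ramified chart \(q=s^2\). Since \(J=\partial_s[\Delta_q(s^2,p)]\) has a fixed sign, the map \(s\mapsto\Delta_q(s^2,p)\) is strictly monotone. Composing with the increasing bijection \(q\mapsto\sqrt q\) shows that \(\Delta_q(\cdot,p)\) is strictly monotone on \((0,\delta)\) as well. Moreover, \(\Delta_q\) is continuous across \(q=0\), because the fractional term contributes \(O(q_+^{1/2})\) to \(\Delta_q\). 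Hence \(\Delta(\cdot,p)\) is \(C^1\) on \((-\delta,\delta)\).

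\textbf{Same-sign case \(cd>0\).} Suppose \(c,d>0\); the case \(c,d<0\) follows by replacing \(\Delta\) with \(-\Delta\). Then \(\Delta_q\) is strictly increasing on \((-\delta,0]\) and on \([0,\delta)\). By continuity at \(0\) it is strictly increasing on the whole interval \((-\delta,\delta)\). Thus \(\Delta_q\) has at most one zero there. By Rolle's theorem, \(\Delta\) has at most two zeros.

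\textbf{Opposite-sign case \(cd<0\).} Now \(\Delta_q\) is strictly monotone on each closed half-interval, so it has at most one zero on \((-\delta,0]\) and at most one on \([0,\delta)\). In total \(\Delta_q\) has at most two zeros on \((-\delta,\delta)\); a zero exactly at \(q=0\) counts only once. Suppose \(\Delta\) had four distinct zeros. Rolle's theorem, which applies since \(\Delta\) is \(C^1\), would give three distinct zeros of \(\Delta_q\), a contradiction. So \(\Delta\) has at most three zeros.

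\textbf{Main obstacle.} The delicate step is uniformity. The limit in Lemma~\ref{lem:curvature} is uniform only as \((q,p)\to(0,0)\), so \(\delta\) and \(P_0\) must be fixed before any zeros are counted. We must also handle the fact that \(J\) controls \(\partial_s\) of \(\Delta_q\), not \(\Delta_{qq}\); the monotonicity transfer through \(q=s^2\) resolves this.

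Finally, at \(\eps=0\) the same argument applies to the singular extension, since Lemma~\ref{lem:curvature} and \((6.2)\) hold uniformly over the whole parameter neighborhood. In particular, no rank condition enters anywhere in this argument.
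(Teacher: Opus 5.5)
Your proposal is correct and follows the paper's proof essentially step for step: one-sided strict monotonicity of \(\Delta_q\) (directly for \(q<0\), through the ramified chart \(q=s^2\) for \(q>0\)), gluing across the continuous value \(\Delta_q(0,p)\) when \(cd>0\), and at most two zeros of \(\Delta_q\) plus Rolle when \(cd<0\). Your closed-half-interval bookkeeping is just a compact version of the paper's split into the cases \(\Delta_q(0,p)\ne0\) and \(\Delta_q(0,p)=0\).
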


\begin{proof}
After shrinking, \(\Delta_q\) is strictly monotone on the negative
half-interval by \((6.3)\).  By \((6.4)\), it is strictly monotone as a
function of \(s=\sqrt q\) on the positive half-interval.  If \(c\) and
\(d\) have the same sign, these monotonicities join across the continuous
value \(\Delta_q(0,p)\), so \(\Delta_q\) has at most one zero.  Rolle's
theorem gives at most two zeros of \(\Delta\).

If the signs are opposite and \(\Delta_q(0,p)\ne0\), then
\(\Delta_q\) has at most one zero on each open half-interval and none at
the boundary, hence at most two in total.  If
\(\Delta_q(0,p)=0\), the opposed strict monotonicities force
\(\Delta_q\) to have the same strict sign on both open half-intervals, so
the boundary zero is its only zero.  In either case, four distinct zeros
of the \(C^1\) function \(\Delta\) would force three distinct zeros of its
derivative.  Thus \(\Delta\) has at most three zeros.
\end{proof}

\begin{proposition}[Sharpness under rank two]
\label{prop:sharpness}
If \((2.14)\) holds, there is a \(C^\ell\) curve \(\mu_*(\eps)\) satisfying
\((2.18)\).  For every sufficiently small fixed \(\eps>0\), the applicable
bound in \((6.5)\) is attained by simple zeros along parameter paths
converging to \(\mu_*(\eps)\).
\end{proposition}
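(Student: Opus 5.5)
The plan has three steps: build the curve \(\mu_*\) by the implicit-function theorem, recentre the displacement at that curve, and then perturb \((a,b)\) around it in a sign-dependent way so that the monotonicity structure of Lemma~\ref{lem:curvature} forces the required number of simple sign changes.

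\textbf{Step 1: the curve \(\mu_*\).} Consider the map \((\eps,\mu)\mapsto(a(\eps,\mu),b(\eps,\mu))\). Since \(a(p)=S(0,p)\) and \(b(p)=S_q(0,p)\), and \(S\) is jointly \(C^{\ell+1}\) by Lemma~\ref{lem:composition}, this map is \(C^\ell\). By (BAL) it vanishes at \((0,0)\). Condition \((2.14)\) says that \(\partial_\mu(a,b)(0,0)\) is invertible. The implicit-function theorem, applied on \([0,\eps_1)\) (extend \(S\) in \(\eps\) to negative values by a \(C^{\ell+1}\) extension and restrict afterwards), gives a \(C^\ell\) curve \(\mu_*\) with \(\mu_*(0)=0\) and \(a=b=0\) along it. The \(C^1\) bound gives \(\mu_*(\eps)=O(\eps)\). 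The same theorem shows that the map \(\mu\mapsto(a,b)(\eps,\mu)\) is a \(C^\ell\) local diffeomorphism from a fixed neighbourhood of \(\mu_*(\eps)\) onto a fixed neighbourhood of \((0,0)\), uniformly in small \(\eps\). Hence \((a,b)\) can serve as independent parameters for each fixed \(\eps\).

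\textbf{Step 2: normalizing the remainder.} Fix a small \(\eps>0\) and write \(\Delta=a+bq+G(q,a,b)\), where \(G=S_2+q_+^{3/2}K\) is expressed through the \((a,b)\)-chart. Then \(G(0)=G_q(0)=0\). Lemma~\ref{lem:curvature} gives two facts on \((-\delta,\delta)\), uniformly in small \(\eps\) and small \((a,b)\). On \(q<0\) we have \(G_{qq}\) close to \(2c\). On \(q=s^2>0\) we have \(\partial_s G_q(s^2)\) close to \(\tfrac32 d\). This yields \(G(q)=cq^2(1+o(1))\) for \(q<0\) and \(G(q)=dq^{3/2}(1+o(1))\) for \(q>0\), uniformly as the parameters shrink.

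\textbf{Step 3: choosing \((a,b)\).} Pick a small scale \(\sigma>0\), so that the parameters tend to \(\mu_*(\eps)\) as \(\sigma\to0\).

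In the case \(cd>0\), say \(c,d>0\), take \(b=0\) and \(a=-\sigma\). Then \(\Delta(0)<0\), while \(\Delta(\pm q_0)>0\) for \(q_0\) of order \(\sqrt\sigma\) on the left and \(\sigma^{2/3}\) on the right. The intermediate-value theorem gives two zeros. Since \(\Delta_q\) is strictly monotone and vanishes at most once, with its zero between them, neither root can have \(\Delta_q=0\), so both roots are simple.

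In the case \(cd<0\), say \(c>0>d\), imitate Figure~\ref{fig:germs}(b). Take \(b=\lambda\sigma\) and \(a=\kappa_1\sigma^2\) in the negative-side scaling, where on the left \(\Delta\approx\sigma^2(\kappa_1+\lambda x+x^2)\) with \(q=\sigma x\). Choose \(\lambda>0\) and \(\kappa_1>0\) with \(\lambda^2>4\kappa_1\), so that there are two simple negative roots. On the right, \(q>0\), we have \(\Delta\approx\kappa_1\sigma^2+\lambda\sigma q+dq^{3/2}\). This is positive near \(0^+\) and becomes negative once \(q\gg\sigma^2\), namely at \(q\sim\sigma^2\lambda^2/d^2\) scaled appropriately, which still lies inside \((0,\delta)\) as \(\sigma\to0\). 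This produces a third sign change. Perturbative robustness of the strict sign patterns under the \(o(1)\) errors follows from uniform convergence in each rescaled window.

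I expect the main obstacle to be verifying that these rescaled sign patterns survive the uniform error terms. Concretely, I must check that the \(o(1)\) in \(G\) is small relative to each window's leading balance. In the negative window the leading terms are of size \(\sigma^2\) and \(G-cq^2=o(q^2)=o(\sigma^2)\), so this is fine. The positive window needs an explicit choice of test points. Simplicity of the zeros again follows from the Rolle count: \(\Delta_q\) has at most two zeros, and these separate the three roots.

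Finally, by Theorem~\ref{thm:main}(iv), for \(\eps>0\) these simple zeros are physical limit cycles.
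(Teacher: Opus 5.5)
\textbf{The step that fails.} The positive-side bracket in the case $cd<0$ does not work as written. You take $a=\kappa_1\sigma^2$ and $b=\lambda\sigma$. At the scale $q\sim\lambda^2\sigma^2/d^2$ the linear term $\lambda\sigma q$ balances $|d|q^{3/2}$, but both are $O(\sigma^3)$ there. The constant term $a=\kappa_1\sigma^2$ is much larger. Hence $\Delta=\sigma^2(\kappa_1+O(\sigma))>0$ at your proposed test point; in fact $\Delta>0$ on the whole range $0\le q\ll\sigma^{4/3}$.

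The sign change occurs only where $|d|q^{3/2}$ balances $a$, that is, at $q\asymp\sigma^{4/3}$. To repair the step, test at $q=C\sigma^{4/3}$ with $|d_\eps|C^{3/2}>\kappa_1$. There $\lambda\sigma q=O(\sigma^{7/3})$ and $S_2(q)=O(\sigma^{8/3})$ are both $o(\sigma^2)$, so
\[
\Delta(q)=\sigma^2\bigl(\kappa_1-|d_\eps|C^{3/2}+o(1)\bigr)<0 .
\]
Together with $\Delta(0)=a>0$ this gives the third zero. This is exactly the paper's positive-side bracket in (6.11)--(6.13), where $q_t\asymp t^{4/3}$.

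\textbf{A smaller point.} For fixed $\eps>0$ the parameters converge to $p_*(\eps)$, not to $0$. So the limiting coefficients in your rescaled windows are $c_\eps=\tfrac12S_{qq}(0,p_*(\eps))$ and $d_\eps=K(0,0,p_*(\eps))$. Your claim $G-cq^2=o(\sigma^2)$ is false at fixed $\eps$, because the error contains $(c_\eps-c)q^2$. You have two options:
\begin{itemize}
\item work with $c_\eps,d_\eps$, whose signs agree with those of $c,d$ for small $\eps$, as the paper does; or
\item use the uniform two-sided bounds $(c\mp\eta)q^2$ and $(d\mp\eta)q^{3/2}$ furnished by Lemma~\ref{lem:curvature}, and choose your test points with a margin.
\end{itemize}
In particular, the two-negative-root condition should read $\lambda^2>4c_\eps\kappa_1$ (with margin), not $\lambda^2>4\kappa_1$.

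\textbf{Comparison with the paper.} With these corrections the proof goes through, and your simplicity argument is a legitimate alternative route. The paper proves simplicity by $C^1$ convergence of the rescaled displacements to $-1+c_\eps z^2$ and $-1+d_\eps z^{3/2}$, plus the explicit derivative estimate (6.14) for the positive root. You instead use two facts:
\begin{itemize}
\item by the proof of Proposition~\ref{prop:zeros}, $\Delta_q$ has at most one (respectively two) zeros in $(-\delta,\delta)$;
\item Rolle already places that many zeros of $\Delta_q$ strictly between consecutive zeros of $\Delta$.
\end{itemize}
Hence no zero of $\Delta$ can be critical. Your route obtains existence and simplicity from value-level sign bounds in each window alone. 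The paper's route additionally pins each root to its precise scaled location.
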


\begin{proof}
Put
\[
 \Phi(\eps,\mu)=(a(\eps,\mu),b(\eps,\mu)).
\tag{6.6}
\]
Because \(S\in C^{\ell+1}\), one has
\(a=S(0,\cdot)\in C^{\ell+1}\) and
\(b=S_q(0,\cdot)\in C^\ell\), hence \(\Phi\in C^\ell\).
The parameter-dependent inverse-function theorem gives a \(C^\ell\)
inverse chart
\[
 \mu=\Psi(\eps,\mathsf a,\mathsf b),\qquad
 \Phi\bigl(\eps,\Psi(\eps,\mathsf a,\mathsf b)\bigr)
   =(\mathsf a,\mathsf b),\qquad
 \Psi(0,0,0)=0,
\]
on fixed neighborhoods for all sufficiently small \(\eps\).  Define
\[
 \mu_*(\eps)=\Psi(\eps,0,0),\qquad
 p_*(\eps)=(\eps,\mu_*(\eps)).
\]
Then \(a(p_*(\eps))=b(p_*(\eps))=0\), and \(C^1\) regularity gives
\(\mu_*(\eps)=O(\eps)\).  Thus, for fixed \(\eps\),
\(\Psi(\eps,\mathsf a,\mathsf b)\to\mu_*(\eps)\) as
\((\mathsf a,\mathsf b)\to(0,0)\), whereas jointly
\((\eps,\mu_*(\eps))\to(0,0)\).

Fix a compact parameter neighborhood \(\mathcal P_1\) of the origin in the
common coordinate domain, and shrink the target of \(\Psi\) so that its
image lies in \(\mathcal P_1\).  Taylor's formula and the ramified expansion
give
\[
\begin{aligned}
 S_2(q,p)&=c_pq^2+q^2\omega_-(q,p),
       &&c_p=\tfrac12S_{qq}(0,p),\quad\omega_-\to0,\\
 q^{3/2}K(\sqrt q,q,p)
 &=q^{3/2}\bigl(d_p+\omega_+(q,p)\bigr),
       &&d_p=K(0,0,p),\quad\omega_+\to0 .
\end{aligned}
\tag{6.7}
\]
Here the first line is used for \(q<0\), whereas in the second line
\(q>0\) and
\(\omega_+(q,p)=K(\sqrt q,q,p)-K(0,0,p)\).  The remainder statements mean
\[
 \sup_{p\in\mathcal P_1}|\omega_-(q,p)|\longrightarrow0
 \quad(q\uparrow0),
 \qquad
 \sup_{p\in\mathcal P_1}|\omega_+(s^2,p)|\longrightarrow0
 \quad(s\downarrow0).
\]
The latter is a one-sided limit in the ramified variable; no additional
ordinary \(q\)-regularity is asserted.  Hence the estimates remain uniform
when \(q\to0\) at fixed \(\eps\) and \(p\to p_*(\eps)\); they do not require
\(p\to0\) for that fixed \(\eps\).
Here \(c_p\to c\) and \(d_p\to d\) as \(p\to0\).  For a fixed
\(\eps\), set \(c_\eps=c_{p_*(\eps)}\) and
\(d_\eps=d_{p_*(\eps)}\).  Along the inverse chart,
\[
 c_p\to c_\eps,\qquad d_p\to d_\eps
 \quad\text{as }(\mathsf a,\mathsf b)\to(0,0).
\]
Moreover \(c_\eps\to c\) and \(d_\eps\to d\) as \(\eps\downarrow0\), so
their signs agree with those of \(c,d\) for all sufficiently small
\(\eps\).
The corresponding first-derivative estimates follow from
Lemma~\ref{lem:curvature}.

For this fixed \(\eps\), put \(\sigma=\sgn(c_\eps)\).  Replacing
\(\Delta\) by \(\sigma\Delta\), the target coordinates by
\((\widetilde{\mathsf a},\widetilde{\mathsf b})
 =\sigma(\mathsf a,\mathsf b)\), and the inverse chart by
\[
 \widetilde\Psi(\eps,\widetilde{\mathsf a},
 \widetilde{\mathsf b})
 =\Psi(\eps,\sigma\widetilde{\mathsf a},
 \sigma\widetilde{\mathsf b})
\]
does not change the zeros or the center \(\mu_*(\eps)\).  Suppressing
tildes, we may therefore assume \(c_\eps>0\).

If \(cd>0\), then \(d_\eps>0\).  In the inverse chart choose
\((\mathsf a,\mathsf b)=(-A,0)\), \(A>0\).  On the two scales
\[
 q=-A^{1/2}z,\qquad q=A^{2/3}z,
\]
division by \(A\) gives respectively
\[
 -1+c_pz^2+o(1),\qquad -1+d_pz^{3/2}+o(1).
\tag{6.8}
\]
As \(A\downarrow0\), the limiting coefficients are
\(c_\eps,d_\eps>0\).  More explicitly, with
\(p_A=(\eps,\Psi(\eps,-A,0))\), define on compact positive
\(z\)-intervals
\[
 F_A^-(z)=A^{-1}\Delta(-A^{1/2}z,p_A),
 \qquad
 F_A^+(z)=A^{-1}\Delta(A^{2/3}z,p_A).
\]
The uniform remainder estimates in \((6.7)\), together with
Lemma~\ref{lem:curvature} after the corresponding rescalings, give
\[
 F_A^-\longrightarrow-1+c_\eps z^2,
 \qquad
 F_A^+\longrightarrow-1+d_\eps z^{3/2}
 \quad\text{in }C^1
\tag{6.8a}
\]
on compact neighborhoods of
\(c_\eps^{-1/2}\) and \(d_\eps^{-2/3}\), respectively.  Both limiting
roots are simple.  Simple-root persistence therefore gives exactly one
simple root in each of these two disjoint scaled neighborhoods, while
\(\mu=\Psi(\eps,-A,0)\to\mu_*(\eps)\).

If \(cd<0\), then \(d_\eps<0\), and choose
\[
 \mathsf b=t,\qquad
 \mathsf a=\frac{t^2}{8c_\eps},\qquad t>0.
\tag{6.9}
\]
Take
\(p=(\eps,\Psi(\eps,\mathsf a,\mathsf b))\).
On \(q=tz<0\), division by \(t^2\) gives
\[
 \frac1{8c_\eps}+z+c_pz^2+o(1).
\tag{6.10}
\]
On compact neighborhoods of its two negative roots,
\(t^{-2}\Delta(tz,p_t)\) converges to this quadratic in \(C^1\), by
\((6.7)\) and Lemma~\ref{lem:curvature}.  Its discriminant is \(1/2\), so
both roots are simple and persist as two simple negative roots for small
\(t>0\).  On the positive side set
\[
 q_t=\left(\frac{t^2}{4c_\eps|d_\eps|}\right)^{2/3}.
\tag{6.11}
\]
Choose once and for all \(0<\lambda<2^{-2/3}\).  Since
\(\mathsf a=t^2/(8c_\eps)\), the uniform expansion gives
\[
 \frac{\Delta(\lambda q_t,p_t)}{\mathsf a}
  =1-2\lambda^{3/2}+o(1)>0,\qquad
 \frac{\Delta(q_t,p_t)}{\mathsf a}
  =-1+o(1)<0,
\tag{6.12}
\]
where \(p_t=(\eps,\Psi(\eps,\mathsf a,t))\).  Hence there is a root
\[
 q_*(t)\in(\lambda q_t,q_t),\qquad q_*(t)\asymp t^{4/3}.
\tag{6.13}
\]
It is simple.  Indeed, with \(q=s^2\), Lemma~\ref{lem:curvature} gives
\[
 \Delta_q(s^2,p_t)
 =t+\int_0^sJ(\sigma,p_t)\,\dd\sigma
 =t+s\left(\frac32d_\eps+o(1)\right).
\tag{6.14}
\]
On the bracket, \(s\ge\sqrt{\lambda q_t}\asymp t^{2/3}\).
Because \(d_\eps<0\), the negative \(t^{2/3}\) term dominates the positive
\(t\) term, so \(\Delta_q<0\) throughout the bracket for small \(t\).
Finally
\[
 \mu=\Psi\left(\eps,\frac{t^2}{8c_\eps},t\right)
 \longrightarrow\mu_*(\eps),
\]
which proves the asserted fixed-\(\eps\) centering.
\end{proof}

\begin{proof}[Proof of Theorem~\ref{thm:main}]
The ramified expansion and coefficient formula are
Lemma~\ref{lem:composition}.  Lemma~\ref{lem:curvature} and
Proposition~\ref{prop:zeros} give the upper bounds, while
Proposition~\ref{prop:sharpness} gives the centered scalar sharpness and
the curve \((2.18)\).  For \(\eps>0\),
\(\mathcal R_p=P_p^{\rm phys}\) is an ordinary Poincar\'e map on the
selected local first-return section.  Its distinct fixed points correspond
bijectively to distinct periodic orbits.  The \(\eps=0\) member is used only
as the singular parameter extension in the displacement estimates, so no
cycle correspondence is invoked there.  To obtain equality in the physical
cyclicity statement, choose \(\eps_n\downarrow0\) and, on the realizing path
at \(\eps_n\), choose \(\mu_n\) and \(N\) distinct zeros \(q_{n,j}\) of
\(\Delta(\,\cdot\,,(\eps_n,\mu_n))\) so that
\[
 \|\mu_n-\mu_*(\eps_n)\|
 +\max_{1\le j\le N}|q_{n,j}|<\frac1n,
\]
where \(N=2\) or \(3\) in the indicated sign case.  Then
\((\eps_n,\mu_n)\to(0,0)\), every \(\eps_n>0\), and
Definition~\ref{def:cyclicity} gives equality.  This completes the proof.
\end{proof}

\begin{remark}[A \(C^1\) value expansion is insufficient]
\label{rem:c1}
The curvature hypotheses cannot be replaced by a pointwise remainder.
For \(x<0\), set
\[
 r(x)=(-x)^{7/2}\sin((-x)^{-2}),\qquad r(x)=0\quad(x\ge0).
\]
Then \(r\in C^1\), \(r=o(x^2)\), and its \(C^1\) norm tends to zero on
shrinking intervals, but
\[
 a+bx+x^2+x_+^{3/2}+r(x)
\]
can have arbitrarily many negative zeros tending to the origin.  The
oscillatory phase changes by an unbounded amount on a window where the
nonoscillatory quadratic drift is smaller than the amplitude.  This is why
\((6.3)\)--\((6.4)\), rather than only the displayed leading values, are
part of the theorem.
\end{remark}

\begin{remark}[Germ quantifiers]
The interval \((-\delta,\delta)\) is fixed before the parameter
neighborhood is shrunk.  The opposite-sign exact polynomial can have a
fourth zero outside such a sufficiently small local interval; no global
zero bound is asserted.
\end{remark}

\begin{remark}[Coordinate meaning]
\label{rem:coordinates}
The coefficient \(d\) depends on oriented input and output normalizations,
but its nonvanishing does not.  Under an orientation-preserving phase
change \(\widetilde q=\alpha q+O(q^2)\), \(\alpha>0\), and a nonzero
output scaling, \(c\) and \(d\) acquire the same output sign and positive
phase factors.  Hence \(\sgn(cd)\), and therefore the distinction in
\((2.17)\), is invariant.
\end{remark}

\section{An explicit sharpness benchmark}

The following polynomial family verifies all general hypotheses and
independently controls the signs of \(c\) and \(d\).  It proves nonemptiness
of the abstract class and sharpness of both alternatives without computer
assistance; it is a mathematical benchmark, not a biological model.

Let
\[
\begin{aligned}
 \phi(x)&=x(1-x^2)\left(x^2-\frac37\right),\\
 \psi_0(x)&=x,\qquad
 \psi_1(x)=x^3-\frac35x,\\
 m_\mu(x)&=1+\alpha\phi(x)+\mu_0\psi_0(x)+\mu_1\psi_1(x).
\end{aligned}
\tag{7.1}
\]
Fix nonzero \(\alpha,\beta\), with \(|\alpha|\) small enough that
\(m_0>0\) near \([-1,1]\), and restrict \(\mu\) so that \(m_\mu>0\)
there.  With switching function \(h(x,y)=y-1\), define
\[
 X^-_{\eps,\mu}(x,y)=
 \begin{pmatrix}
 \eps-m_\mu(x)y^2\\
 m_\mu(x)xy
 \end{pmatrix},
\qquad
 X^+_{\eps,\mu}
 =X^-_{\eps,\mu}+\beta(y-1)\begin{pmatrix}0\\1\end{pmatrix}.
\tag{7.2}
\]

\begin{proposition}[Explicit realization of both sign cases]
\label{prop:benchmark}
The family \((7.2)\), with suitable separated sections, satisfies all
hypotheses of Theorem~\ref{thm:main}.  In the normalized moving-penetration
coordinate its coefficients are
\[
 c=\frac{8\alpha}{7},\qquad
 d=\frac{4\sqrt2}{3}\beta,\qquad
 L_0=1,
\tag{7.3}
\]
and
\[
 \det D_\mu(a,b)(0)=\frac8{15}.
\tag{7.4}
\]
Thus the signs of \(c\) and \(d\) can be chosen independently.
\end{proposition}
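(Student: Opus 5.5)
The plan is to verify the hypotheses of Theorem~\ref{thm:main} directly at $\eps=0$ and to read off every coefficient from explicit integrals. The starting observation is that, for every $\mu$, the layer field $X^-_{0,\mu}=m_\mu(x)\,(-y^2,\,xy)$ conserves $x^2+y^2$. Indeed,
\[
 \frac{\dd}{\dd t}\bigl(x^2+y^2\bigr)=2m_\mu\bigl(-xy^2+xy^2\bigr)=0 .
\]
Hence, since $m_\mu>0$, the fast fibers are circles traversed by a time reparametrization.

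\textbf{Verifying (EE), (GR), (REG).} Near $y=0$ take $u=y$ and $v=x$. Then $\dot u=u\,m_\mu(v)v$ and $\dot v=\eps-u\,(m_\mu u)$, which is form $(2.1)$ with
\[
 f=m_\mu(v)v,\qquad g=1,\qquad h_0=-m_\mu u,
\]
after the common time reversal required by the sign conventions in $(2.2)$. The base cycle consists of the upper unit semicircle, traversed from $(1,0)$ to $(-1,0)$ because $\dot x=-y^2<0$, followed by the delayed slow passage along $y=0$ from $-1$ back to $1$. Put $G_\mu(x)=\int_0^x s\,m_\mu(s)\,\dd s$. The balance relation $(2.4)$ then reads $G_\mu(x_1)=G_\mu(-R)$ for incoming radius $R$.

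For (GR), use $h=y-1$ at $z=(0,1)$. Remark~\ref{rem:jet} with $v=f^-(0,1)=-1$ gives
\[
 \kappa=-v\,\partial_xg^-=1,\qquad \beta(p)=\beta .
\]
The normalized first integral is $I=\sqrt{x^2+y^2}-1$, whose differential at $(0,1)$ is $dy=dh$, as required by $(2.10)$.

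For (REG), place $\Pi$ on the upper arc, say at $x=-1/2$. The post-grazing leg then runs along circles, so in the coordinate $I$ it is the identity at $\eps=0,\mu=0$. This gives $L_0=1$ in $(5.4)$. Proposition~\ref{prop:grazing} then yields $d=\frac{4\sqrt2}{3}\beta$.

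\textbf{Computing $c$ and $a,b$.} Write $\Phi(x)=\int_0^x s\phi(s)\,\dd s$. Since $s\phi(s)$ is even, $\Phi$ is odd, and $x^2/2$ is even. The balance equation at $\mu=0$ therefore becomes
\[
 \frac{x_1^2}{2}+\alpha\Phi(x_1)=\frac{R^2}{2}-\alpha\Phi(R),
 \qquad\text{so}\qquad
 x_1=R-\frac{2\alpha\Phi(R)}{R}+O(\alpha^2).
\]
Both factors in $\phi$ were chosen for this step. The factor $(1-x^2)$ gives $\Phi'(1)=\phi(1)=0$. The factor $(x^2-3/7)$ gives $\Phi(1)=0$, because
\[
 \int_{-1}^1 x^2(1-x^2)\bigl(x^2-\tfrac37\bigr)\,\dd x
 =2\Bigl(\tfrac27-\tfrac17-\tfrac17\Bigr)=0 .
\]
Moreover $\Phi''(1)=\phi'(1)=-8/7$. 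So, with $q=R-1$, the smooth displacement is
\[
 S=\frac{8\alpha}{7}q^2+\cdots,\qquad c=\frac{8\alpha}{7}.
\]
The $\alpha^2$ corrections are of higher order and do not alter this coefficient, but I would state that explicitly in the write-up.

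For the unfolding, the $\mu_0$-term contributes $\int_0^1 s^2\,\dd s=1/3$, so $\partial_{\mu_0}a=-2/3$. The $\mu_1$-term contributes $\int_0^1\bigl(s^4-\tfrac35 s^2\bigr)\dd s=0$, so $\partial_{\mu_1}a=0$. Differentiating the displacement in $R$ gives $\partial_{\mu_1}b=-4/5$. Because $\partial_{\mu_1}a=0$, the determinant is
\[
 \det D_\mu(a,b)(0)=\Bigl(-\tfrac23\Bigr)\Bigl(-\tfrac45\Bigr)=\tfrac{8}{15},
\]
independently of the less transparent entry $\partial_{\mu_0}b=-\frac43-\frac{16\alpha}{21}$.

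\textbf{Main obstacle.} The hard step is the bookkeeping that turns the $\eps=0$ balance calculation into the conjugated displacement $S$ of $(2.12)$ in the exact moving coordinate $q=Q_p(\eta)$. This involves two things. First, the layer fibers are circles, so $\Lambda_i$ and $\pi_i$ map radius to endpoint up to the time-reversal orientation; I must confirm that the footpoint maps contribute no extra curvature. Second, the factor $M$ of Remark~\ref{rem:pullback} must equal $1$ to the needed order. I would handle both by writing the full singular return in the radius variable $R$, checking that $Q_0(\eta)=R-1$ exactly, and recomputing $b$ from this composite as a consistency check. Positivity $m_\mu>0$ for small $|\alpha|,|\mu|$ gives the uniform signs in $(2.2)$ away from $x=0$. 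With these checks, $cd$ has the sign of $\alpha\beta$, and the two signs can be chosen independently.
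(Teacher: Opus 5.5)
Your proposal is correct and follows essentially the paper's route: circular fast fibers, the balance integral of $s\,m_\mu(s)$, the jet formula for $\kappa$ and $\beta$, and implicit differentiation of the balance relation for $(a,b)$. Your choice $v=x$ with a time reversal, where the paper takes $v=-x$, is only cosmetic.

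There are two small loose ends. First, your $O(\alpha^2)$ remainder in $x_1$ can be avoided by differentiating $G_0(x_1(R))=G_0(-R)$ twice at $R=1$, as the paper does. This gives $x_1''(1)=f_0'(-1)-f_0'(1)=16\alpha/7$ with $f_0(s)=s\,m_0(s)$, so $c=8\alpha/7$ holds exactly, with no smallness assumption on $\alpha$. Second, $L_0=1$ requires $Q_0'(0)=x_1'(1)=1$, which is the unit multiplier, and not only that the post-grazing leg is the identity in $I$.
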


\begin{proof}
At \(\eps=0\), \(y=0\) is a critical line with normal eigenvalue
\(x\,m_\mu(x)\), and the reduced slow flow is \(x'=1\).  Near the entry
endpoint \(x=-1\), define \(p_\mu(x)>0\) by
\[
 \int_x^{p_\mu(x)} f_\mu(s)\,\dd s=0,\qquad
 f_\mu(s)=s\,m_\mu(s).
\tag{7.5}
\]
The choice of \(\phi\) gives
\[
\begin{aligned}
 \int_{-1}^{1}f_0(s)\,\dd s
 &=\alpha\int_{-1}^{1}
 s^2(1-s^2)\left(s^2-\frac37\right)\,\dd s=0,
\end{aligned}
\tag{7.6}
\]
so \(p_0(-1)=1\).

Below the switching line, the layer field is a positive scalar multiple of
\(y(-y,x)\), and
\[
 \frac{\dd}{\dd t}(x^2+y^2)=0.
\tag{7.7}
\]
The upper unit semicircle therefore runs from \((1,0)\) to \((-1,0)\),
closing the singular cycle.  It grazes \(y=1\) at \((0,1)\), where
\[
 \kappa(0)=1,\qquad
 \beta(0)=dh\!\left(\frac{X^+-X^-}{h}\right)=\beta.
\tag{7.8}
\]

The smooth reference return in the negative critical-endpoint coordinate is
\(R_\mu(x)=-p_\mu(x)\).  Differentiating \((7.5)\) gives
\[
 p_\mu'(x)=\frac{f_\mu(x)}{f_\mu(p_\mu(x))}.
\tag{7.9}
\]
Since \(m_0(\pm1)=1\), one has
\[
 p_0'(-1)=-1,\qquad
 p_0''(-1)=f_0'(-1)-f_0'(1)=\frac{16\alpha}{7}.
\tag{7.10}
\]

The fast radius after balance is \(p_\mu(x)\).  Hence the exact moving
penetration is
\[
 q=Q_\mu(x):=p_\mu(x)-1.
\tag{7.11}
\]
Let \(G_\mu(r)>0\) be the absolute value of the negative \(x\)-coordinate
where the physical fast orbit starting at \((r,0)\) returns to \(y=0\).
Proposition~\ref{prop:grazing} gives
\[
 G_\mu(1+q)
 =1+q+\frac{4\sqrt2}{3}\beta\,q_+^{3/2}
 +q_+^2B(\sqrt{q_+},\mu).
\tag{7.12}
\]
The return conjugated by \((7.11)\) is
\[
\begin{aligned}
 \widehat F_\mu(q)
 &=Q_\mu\circ[-G_\mu\circ p_\mu]\circ Q_\mu^{-1}(q)\\
 &=p_\mu\!\left(-G_\mu(1+q)\right)-1.
\end{aligned}
\tag{7.13}
\]
At \(\mu=0\), equations \((7.10)\)--\((7.13)\) yield
\[
 \widehat F_0(q)-q
 =\frac{8\alpha}{7}q^2+
 \frac{4\sqrt2}{3}\beta q_+^{3/2}
 +q_+^2\widetilde B(\sqrt{q_+})+O(q^3),
\tag{7.14}
\]
which proves the first two formulas in \((7.3)\).

To compute the unfolding, put
\[
 I_j=\int_{-1}^1s\psi_j(s)\,\dd s,\qquad
 I_0=\frac23,\quad I_1=0.
\tag{7.15}
\]
Implicit differentiation of \((7.5)\) at \((-1,0)\) gives
\[
 \partial_{\mu_j}p=-I_j,\qquad
 \partial_{\mu_j}\bigl[-p_\mu'(-1)-1\bigr]
 =\psi_j(-1)-\psi_j(1)+f_0'(1)I_j.
\tag{7.16}
\]
For
\[
 a(\mu)=p_\mu(-1)-1,\qquad
 b(\mu)=-p_\mu'(-1)-1,
\]
this becomes
\[
 D_\mu(a,b)(0)=
 \begin{pmatrix}
 -2/3&0\\[1mm]
 -4/3-16\alpha/21&-4/5
 \end{pmatrix},
\tag{7.17}
\]
whose determinant is \(8/15\).

It remains to check the physical entry--exit interface.  In the box
\(y<1\), set \(u=y\), \(v=-x\).  Then
\[
 \dot u=u[-m_\mu(-v)v],\qquad
 \dot v=-\eps+u[m_\mu(-v)u],
\tag{7.18}
\]
which is exactly \((2.1)\) with \(g=-1\) and the required signs.  The fast
fibers have critical endpoints \(1\) and \(-1\).  Choose disjoint
critical-line neighborhoods
\(U_0^{\rm c}\ni1\), \(U_1^{\rm c}\ni-1\), and set
\(V_i=\Lambda_i(U_i^{\rm c})\) after shrinking.  These are disjoint
physical-coordinate neighborhoods on the two components of \(u=u_1\).
The endpoint maps, with their domains displayed, are
\[
\begin{aligned}
 \Lambda_0:U_0^{\rm c}\to V_0,\quad
 \Lambda_0(\zeta)&=+\sqrt{\zeta^2-u_1^2},&
 \pi_0:V_0\to U_0^{\rm c},\quad
 \pi_0(v)&=+\sqrt{u_1^2+v^2},\\
 \Lambda_1:U_1^{\rm c}\to V_1,\quad
 \Lambda_1(\zeta)&=-\sqrt{\zeta^2-u_1^2},&
 \pi_1:V_1\to U_1^{\rm c},\quad
 \pi_1(v)&=-\sqrt{u_1^2+v^2}.
\end{aligned}
\tag{7.19}
\]
Since \(u_1>0\), the maps \(\Lambda_i\) are nonidentity.  They conjugate
the physical-section return to \((7.13)\), so no hidden fiber factor changes
\((7.3)\)--\((7.4)\).  In particular,
\(\mathcal B_\mu:U_0^{\rm c}\to U_1^{\rm c}\) and
\(\mathcal T_{\rm ee}^0:V_0\to V_1\) have exactly the types stated in
Section~2.

Place the return section on the regular negative-\(x\) arc after grazing
and before entry--exit.  The intervening arcs are compact regular tubes.
The reference post-grazing endpoint map in \(I=r-1\) is \(T_0(I)=-1-I\),
while \(Q_0'(-1)=p_0'(-1)=-1\); hence
\[
 L_0=Q_0'(-1)T_0'(0)=1.
\tag{7.20}
\]
For small \(p\), \(\kappa(p)=1-\eps>0\).  Closure, submersion, and the
local-first-return property follow after shrinking the selected tubes and
sections.  Theorem~\ref{thm:entryexit} supplies the required
positive-\(\eps\) passage regularity.  All hypotheses of
Theorem~\ref{thm:main} are therefore satisfied.
\end{proof}

\section{Curvature transfer from the smooth reference family}

The coefficient \(c\) in Theorem~\ref{thm:main} can be imposed directly.
For the biological family it is most naturally checked through the HHY
stability derivative.  The following proposition is the precise
normalization bridge to the smooth-reference integral used by
Huang--Huzak--Yao \cite{HuangHuzakYao2026}; it is not needed for the general
zero count and is not asserted for an arbitrary ambient stability
functional.

Use the HHY upper and lower sections \(\Sigma_1\) and \(\Sigma_2\),
with their source and target coordinates oriented as in
\cite{HuangHuzakYao2026}.  Let
\[
 F^-,B^-:\Sigma_1\longrightarrow\Sigma_2
\]
be, respectively, the long forward entry--exit map and the smooth-reference
backward turn map, both written in the common target coordinate on
\(\Sigma_2\).  If \(s\) is the source coordinate on \(\Sigma_1\),
set
\[
 -F_s^-=e^{\alpha_-},\qquad -B_s^-=e^{\alpha_+}.
\]
Thus \(F_s^-<0\) and \(B_s^-<0\) in these orientations.  Write
\(x_0=x(s)\) for the cycle label and \(q=Q(s)\) for the normalized
penetration coordinate, with \(x(s_0)=1\) and \(Q(s_0)=0\).

\begin{proposition}[Smooth-reference curvature transfer]
\label{prop:lambda}
Suppose the minus branch extends smoothly across the switching line and
produces a smooth family of reference cycles with label \(x_0\) on an open
interval containing the grazing value \(x_0=1\).  Assume the ordinary-cycle
difference-map construction of Section~4.2 of
\cite{HuangHuzakYao2026} applies to this reference family.  At a balanced
neutral reference cycle suppose
\[
 \chi^-(1)=0,\qquad \lambda^-(1)=0,
\tag{8.1}
\]
and let \(s_0\) be its section label.  If
\(x'(s_0)>0\) and \(B_s^-(s_0,0)Q'(s_0)\ne0\),
then
\[
 c
 =-\frac{\Phi_0x'(s_0)}
 {2B_s^-(s_0,0)Q'(s_0)}\,(\lambda^-)'(1),
\qquad \Phi_0>0.
\tag{8.2}
\]
In particular \(c\ne0\) if and only if \((\lambda^-)'(1)\ne0\).
\end{proposition}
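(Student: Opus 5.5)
The plan is to express the smooth reference displacement \(S(q,0)\) through the HHY difference map and read off its second derivative at the neutral cycle. The starting object is the HHY difference function on the section \(\Sigma_2\),
\[
 \mathcal D^-(s,\eps)=F^-(s,\eps)-B^-(s,\eps),
\]
whose zeros are reference cycles. Section~4.2 of \cite{HuangHuzakYao2026} shows that, at \(\eps=0\), its zeros are governed by the slow divergence integral. In the cycle label, \(\chi^-(x_0)\) is the balance defect and \(\lambda^-(x_0)\) is the multiplier defect. Concretely, I would use the HHY identity
\[
 \partial_s\mathcal D^-(s,0)=-e^{\alpha_+}\bigl(e^{\alpha_--\alpha_+}-1\bigr),
\]
together with their relation \(\alpha_--\alpha_+=\Phi_0\,\lambda^-(x(s))\) for some positive normalizing factor \(\Phi_0\) (a slow-time or contraction normalization). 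This relation holds along the balanced family where \(\chi^-\equiv0\).

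The reference return is \(\mathcal R^{\rm ref}=(B^-)^{-1}\circ F^-\) on \(\Sigma_1\). This is the "forward map, then undo the backward turn" composition. Its displacement in \(s\) is
\[
 \mathcal R^{\rm ref}(s)-s=(B^-)^{-1}(F^-(s))-(B^-)^{-1}(B^-(s)).
\]
By the mean value theorem this equals \(\mathcal D^-(s)/B_s^-\) up to a factor \(1+O(\mathcal D^-)\). Conjugating by \(q=Q(s)\) then gives the pullback-versus-conjugacy relation of Remark~\ref{rem:pullback}:
\[
 S(q,0)=Q'(s)\,\frac{\mathcal D^-(s,0)}{B_s^-(s,0)}\bigl(1+O(\mathcal D^-)\bigr),\qquad s=Q^{-1}(q).
\]
By (8.1) both \(\mathcal D^-\) and \(\partial_s\mathcal D^-\) vanish at \(s_0\), since \(\chi^-(1)=0\) gives closure and \(\lambda^-(1)=0\) gives the unit multiplier. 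So \(S=O((s-s_0)^2)\), and the correction factors contribute only at third order.

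Differentiating twice at \(s_0\) gives
\[
 S_{qq}(0,0)=\frac{\partial_s^2\mathcal D^-(s_0,0)}{B_s^-(s_0,0)\,Q'(s_0)}.
\]
The \(Q''\) and \(B^-_{ss}\) terms disappear because they multiply \(\mathcal D^-\) or \(\partial_s\mathcal D^-\), and one factor \(Q'^{-2}\) from \(\partial_q^2\) combines with the \(Q'\) prefactor. From the derivative identity at the point where \(\alpha_-=\alpha_+\),
\[
 \partial_s^2\mathcal D^-(s_0)=-e^{\alpha_+}\,\partial_s(\alpha_--\alpha_+)=-e^{\alpha_+}\Phi_0(\lambda^-)'(1)\,x'(s_0).
\]
Absorbing \(e^{\alpha_+}>0\) into \(\Phi_0\) yields (8.2), with \(c=\tfrac12S_{qq}\). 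Since \(x'(s_0)>0\) and \(B_s^-Q'\ne0\), we get \(c\ne0\) exactly when \((\lambda^-)'(1)\ne0\).

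I expect the main obstacle to be the precise normalization bridge. The first part is justifying \(\alpha_--\alpha_+=\Phi_0\lambda^-(x(s))\) with \(\Phi_0>0\) and a remainder that is \(o(s-s_0)\) at \(\eps=0\); the remainder must vanish to first order so that the derivative transfers. The second part is tracking HHY's orientation conventions so that the sign in (8.2) is correct. I would first take HHY's formula expressing the derivative of the difference map as an exponential of a slow divergence integral. I would then check, along the family kept on the balanced locus, that the difference of the two exponents is a positive multiple of their \(\lambda^-\). If the balance constraint moves with \(s\), the \(\chi^-\)-contribution must be shown to vanish to the needed order, and this uses \(\chi^-(1)=0\).
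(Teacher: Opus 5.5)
Your proposal is correct and is essentially the paper's proof: the same difference map $F^--B^-$, the same factorization of its $s$-derivative through $\alpha_--\alpha_+$, the same identity $P^{\rm ref}(s)-s=M(s)\,(F^--B^-)(s,0)$ with $M(s_0)=1/B^-_s(s_0,0)$, and the same observation that at the double zero no derivatives of $M$, $B^-$, or $Q$ beyond $Q'$ survive, which gives $S_{qq}(0,0)=\partial_s^2(F^--B^-)(s_0,0)/\bigl(B^-_s(s_0,0)\,Q'(s_0)\bigr)$. The only difference is that the paper takes $\partial_s(\alpha_--\alpha_+)(s_0,0)=(\lambda^-)'(1)\,x'(s_0)$ directly from HHY's equation (4.19), whereas you propose to derive it; if you do, the off-balance correction to $\alpha_--\alpha_+=\lambda^-(x(s))$ is a smooth multiple of the balance defect, so it drops out to first order at $s_0$ because $s_0$ is a double zero of $F^--B^-$ (both equalities in (8.1)), not because of $\chi^-(1)=0$ alone.
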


\begin{proof}
Put
\[
 \delta_{\rm H}(s,\eps)=F^-(s,\eps)-B^-(s,\eps).
\]
Equations (4.13)--(4.14) of \cite{HuangHuzakYao2026} factor the first
derivative in their crossing construction as
\[
 \partial_s\delta_{\rm H}
 =-\Phi(\alpha_-,\alpha_+)(\alpha_--\alpha_+),
\qquad \Phi>0,
\tag{8.3}
\]
and their equation (4.19) gives
\[
 \partial_s(\alpha_--\alpha_+)(s_0,0)
 =\lambda'(x_0)x'(s_0),\qquad x'(s_0)>0.
\tag{8.4}
\]
These identities occur on printed pp.~13--15.  For ordinary cycles,
Proposition~4.3 and the paragraph following equation (4.23) state that the
same factorization and differentiation argument applies.  Applying that
ordinary-cycle construction to the smooth minus-reference family at its
interior label \(x_0=1\) yields
\[
 \delta_{{\rm H},ss}(s_0,0)
 =-\Phi_0x'(s_0)(\lambda^-)'(1).
\tag{8.5}
\]
This use of the ordinary smooth reference family is distinct from the
physical tangent map in Section~4.3 of that source.

The reference return is
\[
 P^{\rm ref}=(B^-)^{-1}\circ F^-.
\]
Since \(F^-=B^-+\delta_{\rm H}\),
\[
 P^{\rm ref}(s)-s=M(s)\delta_{\rm H}(s,0),
\qquad M(s_0)=\frac1{B_s^-(s_0,0)}.
\tag{8.6}
\]
At the double zero, differentiating twice introduces no derivative of
\(M\).  Conjugating by \(q=Q(s)\) and differentiating at the double fixed
point gives
\[
 S_{qq}(0,0)
 =\frac{\delta_{{\rm H},ss}(s_0,0)}
 {B_s^-(s_0,0)Q'(s_0)}.
\tag{8.7}
\]
Equations \((8.5)\)--\((8.7)\) prove \((8.2)\).
\end{proof}

\begin{corollary}[Oriented sign transfer]
\label{cor:lambda-sign}
Under the hypotheses of Proposition~\ref{prop:lambda}, if
\[
 x'(s_0)>0,\qquad Q'(s_0)>0,\qquad B_s^-(s_0,0)<0,
\]
then
\[
 \sgn c=\sgn (\lambda^-)'(1).
\]
\end{corollary}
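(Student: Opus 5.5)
This is a direct sign bookkeeping on formula \((8.2)\) of Proposition~\ref{prop:lambda}, which already contains all the analytic content. That formula writes
\[
 c=-\frac{\Phi_0\,x'(s_0)}{2B_s^-(s_0,0)\,Q'(s_0)}\,(\lambda^-)'(1),
\qquad \Phi_0>0 .
\]
The corollary's hypotheses are \(x'(s_0)>0\), \(Q'(s_0)>0\) and \(B_s^-(s_0,0)<0\). In particular \(B_s^-(s_0,0)Q'(s_0)\ne0\), so the proposition applies.

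Under these hypotheses the denominator \(2B_s^-(s_0,0)Q'(s_0)\) is strictly negative. The numerator factor \(\Phi_0x'(s_0)\) is strictly positive, since \(\Phi_0>0\) comes from the HHY factorization \((8.3)\) and \(x'(s_0)>0\) is assumed. The prefactor \(-\Phi_0x'(s_0)/(2B_s^-Q')\) is therefore a negative sign times (positive over negative), which is strictly positive. Multiplying \((\lambda^-)'(1)\) by a strictly positive number preserves its sign, including the zero case. This gives \(\sgn c=\sgn(\lambda^-)'(1)\).

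The only point needing care is orientational. The sign of \(B_s^-\) is the one fixed by the convention \(-B_s^-=e^{\alpha_+}\), so \(B_s^-<0\) is consistent with the section orientations inherited from \cite{HuangHuzakYao2026}. Likewise, \(Q'(s_0)>0\) refers to the same source coordinate \(s\) in which \((8.2)\) was derived. No step is a real obstacle. I would only remark that reversing either section orientation flips both \(B_s^-\) and \(Q'\) or one of them, which is why these hypotheses are stated explicitly rather than being derived.
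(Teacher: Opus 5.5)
Your proof is correct and is exactly the paper's argument: in \((8.2)\) the factors \(\Phi_0\), \(x'(s_0)\), and \(Q'(s_0)\) are positive and \(B_s^-(s_0,0)\) is negative, so the scalar multiplying \((\lambda^-)'(1)\) is positive and the two signs agree. Your check that \(B_s^-(s_0,0)Q'(s_0)\ne0\), so that the proposition applies, is a harmless addition, and the closing remark about orientations is not needed for the proof.
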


\begin{proof}
In \((8.2)\), the factors \(\Phi_0\), \(x'(s_0)\), and \(Q'(s_0)\)
are positive, whereas \(B_s^-(s_0,0)\) is negative.  Hence the scalar
multiplying \((\lambda^-)'(1)\) is positive.
\end{proof}

\section{A cutoff Gause realization}

This section is an application, not an additional premise of the abstract
theorem.  We first identify the published cutoff-response class from which
our base member is selected, then state the new quartic response assumption,
and finally separate
the analytic implicit-function argument, the dual-backend base certificate,
the validated nonzero singular parameter points, and the ordinary
positive-parameter numerical illustration.  No empirical calibration is
claimed.

Throughout this section, \(p(x)\) denotes the biological feeding response;
the abstract return parameter \(p=(\eps,\mu)\) is written explicitly as
\((\eps,\mu)\).  The branch parameter \(\varrho\) is unrelated to the
penetration chart \(\rho\).  First variations of the prey isocline \(g\)
are denoted by \(\delta g\) and \(\delta_j g\), reserving \(q\) for the
moving return coordinate used in the abstract theorem.

We turn to a Gause model whose state variables are prey density \(x\ge0\)
and predator density \(y\ge0\):
\[
 \dot x=x\left(1-\frac{x}{3}\right)-p(x)y,\qquad
 \dot y=y(-\eps+p(x)).
\tag{9.1}
\]
Here \(x\) and \(y\) are scaled prey and predator densities.  The feeding
threshold and cap height are normalized to \(x=1\) and \(p(1)=1\), the prey
growth coefficient and carrying capacity are fixed at \(\phi=1\) and \(k=3\),
and the conversion/yield factor is absorbed into the predator scaling.  The
source mortality parameter \(\delta\) is denoted by \(\eps\).  The biological
slow--fast regime is \(0<\eps\ll1\); throughout the proof, \(\eps=0\) denotes
only the singular analysis limit, not a biological zero-mortality model.
The coefficients \(a_0,a_1,\varrho\) and all response-derivative bounds below
are dimensionless in these normalized coordinates.

Kooij--Zegeling \cite[equation (1.5)]{KooijZegeling2019} study the more
general carrying-capacity version of \((9.1)\) with the continuous cutoff
\[
 p_{\rm KZ}(x)=
 \begin{cases}
 x[1+(x-1)(a_0+a_1x)],&0\le x\le1,\\
 1,&x>1.
 \end{cases}
\tag{9.2}
\]
Their discussion on pp.~164--165 motivates the plateau by
satiation/handling and allows further cutoff responses with the same
qualitative shapes.  The anchor used below is the
\((a_0,a_1)=(-1/2,0)\) member of this normalized family.  It satisfies their
monotonicity condition, but is not one of the three parameter choices
\((0,0),(-1,0),(1,-2)\) analyzed in detail in that article.  We add one
coefficient:
\[
 p(x)=
 \begin{cases}
 x[1+(x-1)(a_0+a_1x+\varrho x^2)],&0\le x\le1,\\
 1,&x\ge1.
 \end{cases}
\tag{9.3}
\]
The term \(\varrho x^2\) inside the brackets, and the parameter selection
below, are new assumptions of this paper.  They are not part of
\cite{KooijZegeling2019}.

Figure~\ref{fig:gause-response} displays the particular base member selected
here from the Kooij--Zegeling response class, embedded as the \(\varrho=0\)
member of \((9.3)\).  Kooij--Zegeling introduce the class \((9.2)\), but do
not single out or analyze this parameter choice.  The theorem branch
\(p_\varrho\), with
\((a_0,a_1)=(A_0(\varrho),A_1(\varrho))\) and
\(0<|\varrho|\ll1\), is a nearby analytic quartic family and is not plotted.

\ifsiadsreview
  \begin{figure}[H]
\else
  \begin{figure}[!b]
\fi
\centering
\begin{minipage}[t]{0.455\textwidth}
\centering
\begin{tikzpicture}[x=3.15cm,y=2.75cm,>=Latex,font=\scriptsize]
 \draw[->] (-0.03,0)--(1.63,0) node[right] {\(x\)};
 \draw[->] (0,-0.03)--(0,1.23) node[above] {\(p(x)\)};
 \draw[very thick,blue!70!black]
   plot[domain=0:1,samples=140] (\x,{0.5*\x*(3-\x)});
 \draw[very thick,blue!70!black] (1,1)--(1.53,1);
 \draw[dashed,gray] (1,0)--(1,1.12);
 \fill[blue!70!black] (1,1) circle (1.25pt);
 \draw (-0.018,1)--(0.018,1) node[left=3pt] {\(1\)};
 \node[below left=1pt] at (0,0) {\(0\)};
 \node[below] at (1,0) {\(x=1\)};
 \node[below left=2pt] at (1,1) {\((1,1)\)};
 \node[align=center,blue!70!black] at (0.47,0.34)
   {selected base member \(p_*\)};
 \node[align=center] at (0.48,0.14)
   {\(p_*'>0,\quad p_*''<0\quad(0<x<1)\)};
 \node[above,blue!70!black] at (1.31,1.03) {\(p_*(x)=1\)};
 \draw[->,gray!80] (0.73,1.16)--(0.96,1.01);
 \node[above left,align=right] at (0.73,1.16)
   {\(p'_{*-}(1)=1/2\)};
 \node[right] at (1.03,0.86) {\(p'_{*+}(1)=0\)};
\end{tikzpicture}

\smallskip
{\small\textbf{(a)} Formula-defined cutoff response.}
\end{minipage}\hfill
\begin{minipage}[t]{0.515\textwidth}
\centering
\begin{tikzpicture}[x=4.7cm,y=1.28cm,>=Latex,font=\scriptsize,
 singular/.style={gray!72,densely dashed,thick},
 fast/.style={violet!85!black,very thick}]
 \draw[->] (-0.04,0)--(1.25,0) node[right] {prey \(x\)};
 \draw[->] (0,-0.03)--(0,2.78) node[above] {predator \(y\)};

 \draw[blue!65!black,very thick] (0,2.63)--(0,0.73);
 \draw[red!70!black,very thick,dashed] (0,0.60)--(0,0.055);
 \draw[blue!65!black,->,thick] (0,1.95)--(0,1.48);
 \draw[red!70!black,->,thick] (0,0.49)--(0,0.25);
 \node[anchor=west,blue!65!black] at (0.045,1.72) {attracting \(S_0^a\)};
 \node[anchor=west,red!70!black] at (0.045,0.36) {repelling \(S_0^r\)};
 \fill (0,0.666667) circle (1.05pt);
 \node[anchor=west] at (0.045,0.73) {exchange \(y=2/3\)};

 \draw[dashed,gray!75] (1,0)--(1,2.66);
 \node[anchor=east,gray!65!black] at (0.98,2.52)
   {seam \(x=1\)};
 \draw[fast,postaction={decorate},decoration={markings,
   mark=at position .27 with {\arrow{>}},
   mark=at position .72 with {\arrow{>}}}]
   plot[domain=-2.41020293:1.34739675,samples=190]
   ({1+(2/3)*(\x-exp(\x)+1)},{(2/3)*exp(\x)});

 \fill[violet!85!black] (0,0.0598647) circle (1.15pt);
 \node[anchor=west,text=violet!85!black] at (0.035,0.18)
   {\(y_\alpha\)};
 \fill[violet!85!black] (0,2.5649312) circle (1.15pt);
 \node[anchor=west,text=violet!85!black] at (0.025,2.64)
   {\(y_\omega\)};
 \fill[violet!85!black] (1,0.666667) circle (1.2pt);
 \draw[->,violet!75!black,thin] (0.78,0.91)--(0.975,0.69);
 \node[anchor=east,text=violet!85!black] at (0.78,0.93)
   {tangency \((1,2/3)\)};

 \node[align=center,text=violet!85!black] at (0.72,2.18)
   {exact fast arc\\(desingularized time)};
\end{tikzpicture}

\smallskip
{\small\textbf{(b)} Formula-defined singular anchor.}
\end{minipage}
\caption{Two formula-generated views of the degenerate base member selected
in this paper from the
Kooij--Zegeling cutoff-response class, at \(\varrho=0\) and
\((a_0,a_1)=(-1/2,0)\).  This parameter choice and the analysis shown here
are not results of \cite{KooijZegeling2019}.  In (a),
\(p_*(x)=x(3-x)/2\) for \(0\le x\le1\) and \(p_*(x)=1\) for \(x\ge1\),
so \(p'_{*-}(1)=1/2\) and \(p'_{*+}(1)=0\).  This derivative mismatch enters
the grazing coefficient.  In (b), the fast arc of the singular cycle is
generated from the exact desingularized solution
\(x_*(t)=1+\frac23(t-e^t+1)\), \(y_*(t)=\frac23e^t\), with
\(e^{T_\pm}-T_\pm=\frac52\) and \(T_-\le t\le T_+\).  It runs from
\((0,y_\alpha)\), through the
quadratic tangency \((1,2/3)\), to \((0,y_\omega)\).  The segment on \(x=0\)
runs downward from \(y_\omega\) to \(y_\alpha\), completing the oriented
singular cycle; its transverse stability changes at \(y=2/3\).  No numerical
ODE integration is used.  The panels show the degenerate base anchor, not a
theorem point; the nearby analytic quartic branch is not shown.}
\label{fig:gause-response}
\end{figure}

Two HHY characteristic functions organize the realization below.  The
quantity \(\chi(x_0)\) is the endpoint-balance mismatch: its vanishing says
that the entry--exit slow connection closes the smooth reference orbit with
cycle label \(x_0\).  The quantity \(\lambda^-(x_0)\) is the HHY
smooth-minus stability integral, computed using the smooth continuation of
the minus field.  Thus
\(\chi(1)=\lambda^-(1)=0\) selects a balanced neutral tangent cycle, while
\((\lambda^-)'(1)\ne0\) supplies its first nonzero smooth curvature.  For
the present Gause family these two functions have the explicit orbit
formulas in \eqref{eq:bio-characteristics} below.

\FloatBarrier

\siadsneedspace{.58\textheight}
\begin{theorem}[Qualitatively admissible Gause cutoff realization]
\label{thm:biology}
There are \(\varrho_0>0\) and real-analytic functions
\[
 A_0,A_1:(-\varrho_0,\varrho_0)\longrightarrow\R,\qquad
 A_0(0)=-\frac12,\quad A_1(0)=0,
\tag{9.4}
\]
such that, for each \(0<|\varrho|<\varrho_0\), the response \((9.3)\)
with \((a_0,a_1)=(A_0(\varrho),A_1(\varrho))\) satisfies
\[
 \chi(1)=0,\qquad
 \lambda^-(1)=0,\qquad
 (\lambda^-)'(1)\ne0.
\tag{9.5}
\]
Here \(\lambda^-\) is the HHY stability integral of the smooth polynomial
continuation of the left branch across \(x=1\).

The response is positive for \(x>0\), strictly increasing and strictly
concave on \(0\le x\le1\), continuous at its cap, and
\[
 p'_-(1)>0=p'_+(1).
\tag{9.6}
\]
The singular fast orbit through \((1,2/3)\) has a nonzero continuous
quadratic-grazing coefficient.  Moreover,
\[
 \det
 \frac{\partial(\chi(1),\lambda^-(1))}
 {\partial(a_0,a_1)}
 \ne0
\tag{9.7}
\]
along the selected branch, and this rank transfers to
\(\det\partial_\mu(a,b)\ne0\) for the canonical return coefficients.
Consequently, after translating
\(\mu=(a_0,a_1)-(A_0(\varrho),A_1(\varrho))\), system \((9.1)\)--\((9.3)\)
satisfies the model-specific hypotheses of Theorem~\ref{thm:main}.
In the normalized return coordinate,
\[
 d>0,\qquad
 \sgn c=\sgn (\lambda^-)'(1)=-\sgn\varrho,
 \qquad
 \sgn(cd)=-\sgn\varrho.
\tag{9.7a}
\]
In addition to the unspecified local branch radius, the two concrete choices
\(\varrho=\pm1/20\) admit unique balanced-neutral parameter pairs in the
explicit boxes of Proposition~\ref{prop:bio-validated-centers}.  Each
certified parameter point satisfies \((9.5)\)--\((9.7\mathrm a)\) pointwise and,
after translating \(\mu\) about that pair, satisfies the model-specific
hypotheses of Theorem~\ref{thm:main}.  This pointwise certification does not
assert a validated continuation from the local analytic branch to
\(\varrho=\pm1/20\).
\end{theorem}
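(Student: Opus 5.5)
The plan is to run everything through the two HHY characteristic functions \(\chi(x_0)\) and \(\lambda^-(x_0)\), viewed as analytic functions of \((a_0,a_1,\varrho)\). First I will write the singular geometry of \((9.1)\)--\((9.3)\) explicitly. At \(\eps=0\) the line \(x=0\) is invariant, and its normal eigenvalue \(1-p'(0)y\) changes sign at \(y=1/p'(0)\). For the base member, \(p_*'(0)=3/2\), which gives the exchange point \(y=2/3\). The fast arcs solve \(\dd y/\dd x=y\,p(x)/(x(1-x/3)-p(x)y)\) and can be integrated in closed form on the smooth polynomial branch. This gives explicit orbit integrals \eqref{eq:bio-characteristics} for the two quantities:
\begin{itemize}
\item \(\chi(x_0)\), the entry--exit integral of the normal eigenvalue over \([y_\alpha(x_0),y_\omega(x_0)]\) divided by the slow speed;
\item \(\lambda^-(x_0)\), the divergence integral along the fast arc of the smooth continuation of the left branch.
\end{itemize}
After the substitution \((u,v)=(x,\,y-\text{exchange value})\), the local form \((2.1)\) with the signs \((2.2)\) is immediate. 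The fast arc tangent to \(x=1\) is the exact arc of Figure~\ref{fig:gause-response}(b).

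Second, at the base point \((a_0,a_1,\varrho)=(-1/2,0,0)\), I will verify \(\chi(1)=\lambda^-(1)=0\) directly from the explicit arc \(x_*(t),y_*(t)\); the endpoints satisfy \(e^{T_\pm}-T_\pm=5/2\). I will then compute the \(2\times2\) Jacobian \(\partial(\chi(1),\lambda^-(1))/\partial(a_0,a_1)\) as integrals of the variations \(\delta_jg\) of the prey isocline along this arc, and show its determinant is nonzero. This is an explicit but tedious integral computation, for which I would lean on the dual-backend interval certificate if closed forms are unwieldy. The analytic implicit-function theorem then produces \(A_0,A_1\) with \((9.4)\) and the first two identities of \((9.5)\). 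The rank transfer to \(\det\partial_\mu(a,b)\ne0\) should follow from the structure at the base point. The closure coefficient \(a\) is a nonzero multiple of \(\chi(1)\) along the section map. By Proposition~\ref{prop:lambda}, the multiplier coefficient \(b\) equals a nonzero multiple of \(\lambda^-(1)\) plus a multiple of \(\chi(1)\). Hence \(\partial_\mu(a,b)\) is a nonsingular triangular matrix times \(\partial(\chi,\lambda^-)/\partial(a_0,a_1)\).

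Third, the qualitative response properties and \((9.6)\) are open conditions. They hold for \(p_*\): \(p_*'>0\), \(p_*''=-1<0\), \(p_{*-}'(1)=1/2>0=p'_{*+}(1)\), so they persist for small \(|\varrho|\). For the grazing data I will use the jet formula of Remark~\ref{rem:jet} in the coordinate \(h=x-1\). It gives \(\kappa>0\) from the quadratic tangency at \((1,2/3)\). It also gives \(\beta\) proportional to \(y\,(p'_-(1)-p'_+(1))\ne0\), with a definite sign. I will then track the orientation of \(L_0\) through the regular post-grazing tube, as in \((5.4)\), to conclude \(d>0\).

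The main obstacle is the curvature sign. At \(\varrho=0\) the base member is degenerate, which I expect means \((\lambda^-)'(1)=0\) there. I therefore plan to differentiate \((\lambda^-)'(1)\) along the branch \((A_0(\varrho),A_1(\varrho),\varrho)\) with respect to \(\varrho\), using the implicit derivatives \(A_j'(0)\) obtained from the Jacobian above. The goal is to show this derivative is strictly negative at \(\varrho=0\). That gives \((\lambda^-)'(1)\ne0\) with sign \(-\sgn\varrho\) for \(0<|\varrho|<\varrho_0\). I would first try to express the derivative as a single explicit integral along \(x_*(t)\) involving the variation \(x^2(x-1)x^2\) of the response, and bound it with interval arithmetic if closed forms fail. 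Corollary~\ref{cor:lambda-sign} then gives \(\sgn c=\sgn(\lambda^-)'(1)\), after checking the orientation conditions \(x'(s_0)>0\), \(Q'(s_0)>0\), \(B^-_s<0\) on the explicit sections. Together with \(d>0\), this yields \((9.7\mathrm a)\), and the hypotheses of Theorem~\ref{thm:main} follow. The statements at \(\varrho=\pm1/20\) are separate pointwise claims, which I would defer to the interval-Newton certificates of Proposition~\ref{prop:bio-validated-centers}, checking the same open sign conditions in those boxes.
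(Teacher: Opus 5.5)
Your proposal follows the paper's proof essentially step for step. The shared steps are the analytic implicit-function theorem at the degenerate anchor with an interval-certified Jacobian built from the \(\delta_jg\), the \(\varrho\)-derivative of \((\lambda^-)'(1)\) along the branch certified negative, the sign transfer via Corollary~\ref{cor:lambda-sign}, the jet formula for \(\kappa\) and \(\beta\), the triangular rank transfer, and the interval-Newton points at \(\varrho=\pm1/20\).

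A few minor corrections. The \(\varrho\)-derivative is the paper's \(\mathfrak M(\delta_{\rm br}g)\) in \((9.18)\), which carries endpoint terms from \(\dd T_\pm/\dd x_0\) and is not a single orbit integral. The relation \(b=r\lambda^-(1)\) on \(\{\chi=0\}\) comes from the HHY factorization \((8.3)\), not from Proposition~\ref{prop:lambda}. The rank factorization should be applied pointwise at each balanced-neutral point, including \(\varrho=\pm1/20\) where no continuation is available, rather than only at the anchor. Finally, \(L_0=1\) follows directly from neutrality, so \(d>0\) reduces to \(\beta_{\rm bio}=\tfrac23p'_-(1)>0\).
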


\begin{proposition}[Analytic balance--neutrality branch]
\label{prop:bio-analytic-branch}
Consider the base response
\[
 (a_0,a_1,\varrho)=(-1/2,0,0).
\]
Near this point, the two equations
\(\chi(1)=\lambda^-(1)=0\) determine a real-analytic branch
\((a_0,a_1)=(A_0(\varrho),A_1(\varrho))\).  The characteristic-function
Jacobian
\[
 D_{(a_0,a_1)}\bigl(\chi(1),\lambda^-(1)\bigr)
\]
is nonsingular at the base point and, after shrinking, along this branch.
Along the branch,
\((\lambda^-)'(1)\ne0\) for every sufficiently small \(\varrho\ne0\), and
\[
 \sgn (\lambda^-)'(1)=-\sgn\varrho.
\]
\end{proposition}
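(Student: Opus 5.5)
The approach is the real-analytic implicit function theorem applied to
\[
\mathcal G(a_0,a_1,\varrho)=\bigl(\chi(1),\lambda^-(1)\bigr),
\]
followed by a first-order expansion of \((\lambda^-)'(1)\) along the resulting branch. The sign claim forces the derivative to vanish at the base point, so the sign has to come from its first-order term in \(\varrho\).

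\textbf{Step 1 (analyticity and base zero).} Using the explicit orbit formulas \eqref{eq:bio-characteristics}, I would check that \(\chi(x_0)\) and \(\lambda^-(x_0)\) depend real-analytically on \((x_0,a_0,a_1,\varrho)\) near \((1,-1/2,0,0)\).

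\begin{itemize}
\item The left branch of \((9.3)\) is a polynomial, jointly in \(x\) and the coefficients.
\item Its smooth continuation across \(x=1\) is the same polynomial.
\item The fast arcs solve an analytic ODE, and the entry--exit endpoints are defined by analytic balance equations with nonzero transversality derivatives.
\end{itemize}

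Hence both characteristic functions are analytic integrals of analytic integrands over analytically moving endpoints. At the base point the fast arc is the exact solution \(x_*(t)=1+\tfrac23(t-e^t+1)\), \(y_*(t)=\tfrac23e^t\) with \(e^{T_\pm}-T_\pm=\tfrac52\). I would verify \(\chi(1)=\lambda^-(1)=0\) there by direct evaluation, since the base member was chosen for this.

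\textbf{Step 2 (Jacobian).} I would compute the first variations \(\delta_j g\) of the prey isocline produced by \(\partial_{a_0}p=x(x-1)\) and \(\partial_{a_1}p=x^2(x-1)\). Then I would differentiate \(\chi(1)\) and \(\lambda^-(1)\) under the integral sign. Endpoint terms are handled by the transversality relations, and the orbit variation by the variational equation along \((x_*,y_*)\). This produces a \(2\times2\) matrix of explicit integrals in \(t\) and \(e^t\) over \([T_-,T_+]\), together with slow-segment integrals on \(x=0\).

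Nonsingularity then gives \((A_0,A_1)\) analytic with \(A_0(0)=-\tfrac12\) and \(A_1(0)=0\). Nonsingularity persists along the branch after shrinking, by continuity.

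\textbf{Step 3 (sign of the curvature).} Set \(\Lambda(\varrho)=(\lambda^-)'(1)\) along the branch; it is analytic. First I would show \(\Lambda(0)=0\), by evaluating \(\partial_{x_0}\lambda^-\) at the base point, which the sign statement requires. Then I would compute
\[
\Lambda'(0)=\partial_\varrho\Lambda-\bigl(\partial_{a_0}\Lambda,\partial_{a_1}\Lambda\bigr)\,J^{-1}\,\partial_\varrho(\chi,\lambda^-),
\]
where \(J\) is the Jacobian from Step 2 and all terms are evaluated at the base point. The aim is \(\Lambda'(0)<0\), which gives \(\sgn\Lambda(\varrho)=-\sgn\varrho\) for small \(\varrho\ne0\).

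\textbf{Main obstacle.} The hard part is evaluating \(\Lambda'(0)\), and the \(\det J\) of Step 2, in closed form. These involve second variations of \(\lambda^-\) in \(x_0\) and in the parameters, and \(T_\pm\) are only implicitly defined transcendental numbers. I would first try to reduce each entry to elementary functions of \(T_\pm\) using \(e^{T_\pm}=T_\pm+\tfrac52\). If exact sign determination fails, I would fall back on rigorous interval enclosures of \(T_\pm\) and of the resulting expressions, using the two interval backends the paper already employs, to certify \(\det J\ne0\) and \(\Lambda'(0)<0\).
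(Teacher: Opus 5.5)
Your proposal matches the paper's proof: analytic implicit-function theorem for \((\chi(1),\lambda^-(1))\) in \((a_0,a_1)\), with the base Jacobian certified by interval enclosures of the first-variation integrals, followed by the sign of \((\lambda^-)'(1)\) from its first-order term in \(\varrho\) along the branch, again certified by interval arithmetic. One simplification the paper exploits is that \(g_*\equiv 2/3\) at the base, so \(g_*'=g_*''=0\) and \(\partial_{x_0}X_*\equiv1\). Your ``second variations'' therefore collapse to the linear functional \(\mathfrak M(\delta g)\) of \((9.18)\), with no second-order variational equation needed, and your \(\Lambda'(0)\) is exactly \(\mathfrak M(\delta_{\rm br}g)\).
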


\begin{proof}

Put
\[
 B(x)=1+(x-1)(a_0+a_1x+\varrho x^2),\qquad
 g(x)=\frac{1-x/3}{B(x)}.
\tag{9.8}
\]
The left polynomial is used on a fixed open neighborhood of \([0,1]\)
whenever a derivative in the cycle label is taken.  At \(\eps=0\) and
for \(x>0\), division of the field by \(p_L(x)>0\) gives
\[
 \dot x=g(x)-y,\qquad \dot y=y.
\tag{9.9}
\]
Since \(p_L(x)=xB(x)\) with \(B(0)>0\), the right-hand side of
\((9.9)\) extends smoothly to \(x=0\).  Below, \((9.9)\) denotes this
smooth desingularized extension on a neighborhood containing the endpoint
line \(x=0\); no literal division by \(p_L(0)\) is asserted.
The orbit tangent at \(x_0=1\) has \(y(0)=g(1)=2/3\).  Let
\(T_-<0<T_+\) be its two hits on \(x=0\).  The HHY characteristic
functions reduce to
\[
\begin{aligned}
 \chi(1)
 &=\frac23(e^{T_+}-e^{T_-})-g(0)(T_+-T_-),\\
 \lambda^-(1)
 &=\int_{T_-}^{T_+}g'(X(t))\,\dd t.
\end{aligned}
\tag{9.10}
\label{eq:bio-characteristics}
\]

At
\[
 (a_0,a_1,\varrho)=\left(-\frac12,0,0\right),
\tag{9.11}
\]
one has
\[
 p_L(x)=\frac{x(3-x)}2,\qquad g(x)\equiv m:=\frac23.
\tag{9.12}
\]
The tangent orbit and its hitting times are
\[
 X_*(t)=1+m(t-e^t+1),\qquad
 e^{T_\pm}-T_\pm=\frac52.
\tag{9.13}
\]
Thus \(\chi(1)=\lambda^-(1)=0\), but
\((\lambda^-)'(1)=0\): this selected base anchor from the published
cutoff-response class satisfies balance and neutrality but fails the
simple-curvature condition required here.

For \(j=0,1,2\), define the first parameter variations
\[
 \delta_j g(x)=\frac43\,\frac{(1-x)x^j}{3-x}.
\tag{9.14}
\]
If a parameter direction changes \(g\) by \(\delta g\), differentiation of the
orbit and transverse endpoint equations gives
\[
\begin{aligned}
 D\chi[\delta g]
 &=\int_{T_-}^{T_+}\bigl(\delta g(X_*(t))-\delta g(0)\bigr)\,\dd t,\\
 D\lambda^-[\delta g]
 &=\int_{T_-}^{T_+}(\delta g)'(X_*(t))\,\dd t.
\end{aligned}
\tag{9.15}
\]
Endpoint variations cancel in the first equation, while the base identity
\(g'_*=0\) removes the orbit variation from the second.

Let \(J\) have columns
\[
 J_j=
 \begin{pmatrix}
 D\chi[\delta_j g]\\ D\lambda^-[\delta_j g]
 \end{pmatrix},
\qquad j=0,1.
\]
The interval certificate described in Appendix~\ref{app:certificate}
proves
\[
 \det J\in[1.1380086664,\,1.2575279529].
\tag{9.16}
\]
This certifies only the base-point derivative, not a numerical \(\varrho_0\).
The vector field, transverse hitting times, and integrals are
real-analytic in \((a_0,a_1,\varrho)\).  The analytic implicit-function
theorem therefore produces \((9.4)\), for some unspecified sufficiently
small \(\varrho_0>0\), with the first two equalities in \((9.5)\).
Nonrigorous central values, included only for orientation, are
\[
 A_0'(0)\approx-0.0394525821,\qquad
 A_1'(0)\approx-0.7187633790.
\tag{9.17}
\]

To differentiate the stability integral in the cycle label, define
\[
\begin{aligned}
 \mathfrak M(\delta g)
 &=
 \int_{T_-}^{T_+}(\delta g)''(X_*(t))\,\dd t\\
 &\quad+(\delta g)'(0)\left[
 -\frac1{m(1-e^{T_+})}
 +\frac1{m(1-e^{T_-})}
 \right].
\end{aligned}
\tag{9.18}
\]
This follows from
\[
 \frac{\dd T_\pm}{\dd x_0}
 =-\frac1{m(1-e^{T_\pm})}.
\tag{9.19}
\]
Along the implicit branch, the first isocline variation is
\[
 \delta_{\rm br} g=\delta_2 g+A_0'(0)\delta_0 g+A_1'(0)\delta_1 g.
\tag{9.20}
\]
The same interval certificate proves
\[
 \mathfrak M(\delta_{\rm br} g)
 \in[-2.6409284065,\,-1.5038804530].
\tag{9.21}
\]
This is again a base-point certificate; analyticity and the expansion below
propagate its sign for unspecified small nonzero \(\varrho\), without a
certified numerical radius.
Consequently
\[
 (\lambda^-)'(1;\varrho)
 =\varrho\,\mathfrak M(\delta_{\rm br} g)+O(\varrho^2),
\tag{9.22}
\]
which is nonzero for all sufficiently small \(\varrho\ne0\), with the sign
claimed in the proposition.
\end{proof}

\begin{proposition}[Validated nonzero singular balanced-neutral parameter points]
\label{prop:bio-validated-centers}
For the exact values \(\varrho_+=1/20\) and \(\varrho_-=-1/20\), put
\[
\begin{aligned}
 \mathcal B_+={}&
 [-0.50197275145282727,-0.50197275125282725]\\
 &\quad{}\times[-0.035947758939822244,-0.035947758739822241],\\[2pt]
 \mathcal B_-={}&
 [-0.49802749511679928,-0.49802749491679926]\\
 &\quad{}\times[0.035928499838247668,0.035928500038247671].
\end{aligned}
\tag{9.22a}
\]
For each sign, there is exactly one pair \((a_0,a_1)\in\mathcal B_\pm\)
such that \(\chi(1)=\lambda^-(1)=0\).  At these two parameter points, validated
enclosures give
\[
\begin{array}{c|c|c}
 \varrho&\det D_{(a_0,a_1)}(\chi,\lambda^-)&(\lambda^-)'(1)\\ \hline
 1/20&[1.1941593312,1.1941593352]
 &[-0.1048330993,-0.1048330959]\\
 -1/20&[1.2001493803,1.2001493843]
 &[0.1050308568,0.1050308603]
\end{array}
\tag{9.22b}
\]
The lower/upper axis hits are transverse, \(B(x)\ge1\), and the remaining
shape and seam enclosures are
\[
\begin{gathered}
\begin{array}{c|c|c}
 \varrho&\min_{[0,1]}p'&\max_{[0,1]}p''\\ \hline
 1/20&0.5120062563&-0.8476526041\\
 -1/20&0.4878590474&-0.9570619837
\end{array}\\[4pt]
\begin{array}{c|c|c}
 \varrho&p'_-(1)&\beta_{\rm bio}\\ \hline
 1/20&[0.5120794896,0.5120794901]&[0.3413863264,0.3413863267]\\
 -1/20&[0.4879010047,0.4879010052]&[0.3252673364,0.3252673368]
\end{array}
\end{gathered}
\tag{9.22c}
\]
Here the displayed minimum is a validated lower bound and the displayed
maximum is a validated upper bound, so both points satisfy the biological
shape, grazing, curvature-sign, and
rank hypotheses of Theorem~\ref{thm:biology}.
\end{proposition}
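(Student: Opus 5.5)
The proof will be a computer-assisted validated-numerics argument, organized as an interval Newton (Krawczyk) certificate for the two-dimensional map \(\mathcal F_\varrho(a_0,a_1)=(\chi(1),\lambda^-(1))\) on each box \(\mathcal B_\pm\), followed by pointwise enclosures of the remaining quantities. For each exact \(\varrho_\pm=\pm1/20\), I will use the desingularized field \((9.9)\), \(\dot x=g(x)-y\), \(\dot y=y\), with \(g=(1-x/3)/B(x)\). I will integrate the tangent orbit forward and backward from \((1,2/3)\) with a validated Taylor-model (or Lohner-type) ODE integrator, carrying the variational equations in the parameters \((a_0,a_1)\) along with it. The hitting times \(T_\pm\) on \(x=0\) will be enclosed by an interval Newton step on \(X(T)=0\). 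Transversality is certified by showing \(\dot X(T_\pm)=g(0)-y(T_\pm)\ne0\), i.e. that the enclosure excludes zero; this is the transverse-hit claim in the proposition. Formulas \((9.10)\) then give enclosures of \(\chi(1)\) and of \(\lambda^-(1)=\int g'(X)\,\dd t\), the latter obtained by adjoining the quadrature as an extra ODE component. Their \((a_0,a_1)\)-derivatives follow by differentiating these formulas: endpoint-variation terms come from the implicit-function formula for \(T_\pm\), and orbit-variation terms come from the first-order variational equations. This yields a validated \(C^1\) enclosure of \(\mathcal F_\varrho\) over the whole box.

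Next, I will choose the midpoint \(\hat z\) of \(\mathcal B_\pm\) and a floating-point approximate inverse \(Y\approx D\mathcal F(\hat z)^{-1}\). I will evaluate the Krawczyk operator \(K(\mathcal B)=\hat z-Y\mathcal F(\hat z)+(I-Y\,D\mathcal F(\mathcal B))(\mathcal B-\hat z)\) in interval arithmetic. If \(K(\mathcal B)\subset\operatorname{int}\mathcal B\), this proves existence and uniqueness of a zero in \(\mathcal B_\pm\), and also nonsingularity of \(D\mathcal F\) throughout the box. The determinant enclosure in \((9.22\mathrm b)\) will then be evaluated at the (tightly enclosed) zero. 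For \((\lambda^-)'(1)\), I will use the analogue of \((9.18)\) at the certified parameters, with the base-point simplification \(g'_*=0\) removed. Concretely, this is the integral of \(g''(X)\,\partial_{x_0}X\) plus endpoint terms \(g'(0)\,\dd T_\pm/\dd x_0\), where \(\partial_{x_0}X\) comes from the variational equation in the initial label \(x_0\) of the tangent orbit. Again I adjoin this to the validated integration and check that the resulting enclosure excludes zero with the stated sign.

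The shape claims are elementary polynomial estimates on \([0,1]\) at parameters in \(\mathcal B_\pm\). I will bound \(p'=B+xB'\) from below and \(p''\) from above by interval subdivision of \([0,1]\) (or Bernstein-coefficient bounds), and I will check \(B(x)\ge1\) the same way; for negative \(\varrho\) this uses the sign of \(x-1\) and the ranges of the coefficients. The one-sided slope is \(p'_-(1)=B(1)+B'(1)=1+a_0+a_1+\varrho\), and \(p'_+(1)=0\) holds trivially. The grazing coefficient \(\beta_{\rm bio}\) will be evaluated through the jet formula of Remark~\ref{rem:jet} in coordinates with \(h=x-1\). There the mismatch is the jump in \(\partial_x\dot x\) induced by the jump \(p'_-(1)-p'_+(1)\) multiplied by \(y=2/3\), up to the normalization fixed by the desingularization. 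Since \(\beta_{\rm bio}\approx\frac23 p'_-(1)\) matches the table, I will confirm that normalization and enclose it directly.

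The main obstacle is the first step: obtaining tight, rigorous \(C^1\) enclosures of the parameter-dependent boundary-value quantities \(T_\pm\), \(\chi\), and \(\lambda^-\). The difficulty is that the orbit passes near \(x=0\), where the original field is singular. I will handle this by working only with the smooth extension \((9.9)\) and by verifying \(B(0)=1-a_0>0\) on the box. Even so, the widths of the boxes (\(\sim10^{-10}\)) demand high-order Taylor models with careful wrapping control. If Krawczyk containment fails, I will first try splitting the integration at the seam and recentering with a better preconditioner. A secondary check, which I will perform by running independent interval backends as the paper does for \((9.16)\), is to guard against implementation errors.
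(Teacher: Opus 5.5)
Your proposal is correct and follows the paper's proof: validated \(C^1\) Poincar\'e maps to \(x=0\) for the augmented desingularized field (quadrature \(z'=g'(x)\), frozen \(a_0,a_1\)) with implicit hitting-time corrections, an interval Newton-type existence and uniqueness certificate, chaining with the initial tangent \((1,g'(1))\) to obtain \((\lambda^-)'(1)\), validated transverse endpoint velocities, and cellwise interval bounds for the shape and seam quantities. The differences are minor: the paper uses the interval Newton operator rather than Krawczyk, and it applies the operator on an outward dyadic superbox \(\widehat{\mathcal B}_\pm\) with \(m_\pm-[DF(\widehat{\mathcal B}_\pm)]^{-1}F(m_\pm)\subset\operatorname{int}\mathcal B_\pm\subset\operatorname{int}\widehat{\mathcal B}_\pm\), because the decimal endpoints of \(\mathcal B_\pm\) are not exactly representable in binary64; your plan would need the same bookkeeping.
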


\begin{proof}
Let \(F_\varrho(a_0,a_1)=(\chi(1),\lambda^-(1))\).  The certificate
augments the desingularized layer equations by
\(z'=g'(x)\), \(a_0'=a_1'=0\), and computes validated
\(C^1\) Poincar\'e maps from \((1,2/3,0,a_0,a_1)\) to \(x=0\)
for the forward and time-reversed fields using
CAPD::DynSys~\cite{KapelaMrozekWilczakZgliczynski2021}.  Their values and
derivatives give interval extensions of \(F_\varrho\) and
\(DF_\varrho\), including the implicit hitting-time correction.  Chaining
the same derivative enclosures with the initial tangent
\((1,g'(1),0,0,0)\) gives the intervals for \((\lambda^-)'(1)\)
in \((9.22\mathrm b)\).

Regard every endpoint in \((9.22\mathrm a)\) as the exact decimal rational
displayed there.  Let \(\operatorname{fl}_{64}\) denote IEEE~754 binary64
round-to-nearest, ties-to-even, and let \(\operatorname{nextDown}\), respectively
\(\operatorname{nextUp}\), denote the adjacent binary64 number
toward \(-\infty\), respectively \(+\infty\).  The validated computation is
performed on the exact dyadic superbox \(\widehat{\mathcal B}_\pm\) obtained
by replacing each lower endpoint \(l\) by
\(\operatorname{nextDown}(\operatorname{fl}_{64}(l))\) and each upper
endpoint \(u\) by
\(\operatorname{nextUp}(\operatorname{fl}_{64}(u))\).  An exact-rational
semantic check verifies
\(\mathcal B_\pm\subset\operatorname{int}\widehat{\mathcal B}_\pm\).

For the recorded dyadic centers \(m_\pm\in\mathcal B_\pm\), the interval
Newton images satisfy the stronger inclusions
\[
 m_\pm-[DF_{\varrho_\pm}(\widehat{\mathcal B}_\pm)]^{-1}
 F_{\varrho_\pm}(m_\pm)
 \subset\operatorname{int}\mathcal B_\pm
 \subset\mathcal B_\pm
 \subset\operatorname{int}\widehat{\mathcal B}_\pm
\tag{9.22d}
\]
hold strictly; all interval pivots exclude zero.  The interval Newton
theorem~\cite[Chapter~8]{MooreKearfottCloud2009} therefore gives one zero in
each superbox; the first inclusion places it in \(\mathcal B_\pm\), while
uniqueness on the superbox proves uniqueness in \(\mathcal B_\pm\).
The determinant and curvature intervals are
then immediate from the same validated \(C^1\) maps.  Finally, natural
interval extensions on 4096 exact dyadic cells covering \([0,1]\) give
the shape bounds in \((9.22\mathrm b)\)--\((9.22\mathrm c)\); the
validated endpoint velocities satisfy \(\dot x>0.6061\) at the lower hit
and \(\dot x<-1.8979\) at the upper hit.  The source, complete enclosures,
and a pinned reproduction script are supplied with the paper.  The
machine-readable certificate records the exact decimal theorem boxes as
strings and the dyadic CAPD superboxes separately.
\end{proof}

\begin{proposition}[Admissibility, grazing, and coefficient signs]
\label{prop:bio-geometry-sign}
After shrinking the branch of Proposition~\ref{prop:bio-analytic-branch},
each response with \(\varrho\ne0\) is positive, strictly increasing, and
strictly concave before the cap, with \(p'_-(1)>0=p'_+(1)\).
The same statements hold pointwise at the two certified parameter points of
Proposition~\ref{prop:bio-validated-centers}.  The tangent
orbit has a uniformly nondegenerate quadratic contact on every fixed range
\(0\le\eps\le\eps_0<1\).  In the normalized return coordinate its smooth
curvature and grazing coefficients satisfy
\[
 c=\frac{\Phi_0}{2e^{\alpha_+}}(\lambda^-)'(1),
 \qquad d=\frac{4\sqrt3}{3}\,\beta_{\rm bio}>0,
\]
and consequently
\[
 \sgn c=-\sgn\varrho,
 \qquad \sgn(cd)=-\sgn\varrho.
\]
\end{proposition}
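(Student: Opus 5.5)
The plan is to verify the four groups of claims separately: shape, contact, grazing coefficient, and curvature coefficient. Each is checked first at the base member \((a_0,a_1,\varrho)=(-1/2,0,0)\) and then propagated by continuity along the analytic branch of Proposition~\ref{prop:bio-analytic-branch}. For the two certified points of Proposition~\ref{prop:bio-validated-centers} we read the same conclusions off the tables \((9.22\mathrm b)\)--\((9.22\mathrm c)\).

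\textbf{Shape.} Write \(p_L(x)=xB(x)\) with \(B\) as in \((9.8)\). At the base, \(p_*'(x)=(3-2x)/2\ge1/2\) and \(p_*''\equiv-1\) on \([0,1]\), and \(B_*(x)=(3-x)/2\ge1\). These are strict bounds on a compact interval. The coefficients \(A_0,A_1\) are continuous with \(A_0(0)=-1/2\) and \(A_1(0)=0\), so after shrinking \(\varrho_0\) we keep \(p'>0\), \(p''<0\) and \(B>0\) on \([0,1]\). This gives positivity for \(x>0\), monotonicity and concavity; the cap \(p\equiv1\) gives positivity beyond \(x=1\). Continuity at the cap is \(B(1)=1\). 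Moreover \(p'_-(1)=B(1)+B'(1)=1+a_0+a_1+\varrho\), which is near \(1/2\), while \(p'_+(1)=0\). At \(\varrho=\pm1/20\) the same facts are exactly the certified enclosures of \(\min p'\), \(\max p''\) and \(p'_-(1)\).

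\textbf{Grazing data.} Use \(h=x-1\) (oriented so the cap side is \(h\ge0\)) and Remark~\ref{rem:jet} with the roles of \(x\) and \(y\) exchanged. On the seam, \(p(1)=1\) gives \(\dot x=2/3-y\) and \(\dot y=(1-\eps)y\). So the tangency sits at \(y=2/3\), where the tangential velocity \((1-\eps)\cdot2/3\) is nonzero, and
\[
 \kappa=(1-\eps)\tfrac23 .
\]
This is bounded below on \(0\le\eps\le\eps_0<1\), which is the uniform quadratic contact. The velocities agree on the seam, so continuity \((2.7)\) holds. Formula \((4.6)\) gives \(\beta_{\rm bio}\) as the jump in the normal derivative of the normal component:
\[
 \beta_{\rm bio}=y\bigl(p'_-(1)-p'_+(1)\bigr)=\tfrac23\,p'_-(1)>0,
\]
which matches the tabulated values. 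Theorem~\ref{thm:main}(i) then gives
\[
 d=L_0\,\frac{4\sqrt2}{3\sqrt{2/3}}\,\beta_{\rm bio}=L_0\,\frac{4\sqrt3}{3}\,\beta_{\rm bio}.
\]
It remains to fix the normalization of the outgoing \(q\)-coordinate so that \(L_0=1\), as in \((7.20)\). I would do this by choosing the return chart as the pulled-back \(I\)-coordinate transported by the reference post-grazing map, orientation preserved.

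\textbf{Curvature.} Apply Proposition~\ref{prop:lambda}, whose hypotheses \(\chi^-(1)=\lambda^-(1)=0\) hold along the branch by \((9.5)\). Substitute \(B_s^-=-e^{\alpha_+}\) and normalize the penetration so that \(Q'(s_0)=x'(s_0)>0\). Then \((8.2)\) reduces to the displayed formula for \(c\). Corollary~\ref{cor:lambda-sign} gives \(\sgn c=\sgn(\lambda^-)'(1)\). This equals \(-\sgn\varrho\) by Proposition~\ref{prop:bio-analytic-branch}, and by the certified intervals in \((9.22\mathrm b)\) at \(\varrho=\pm1/20\). Since \(d>0\), we get \(\sgn(cd)=-\sgn\varrho\).

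\textbf{Main obstacle.} I expect the hard step to be the bookkeeping of orientations and normalizations between HHY's sections \(\Sigma_1,\Sigma_2\) and our moving penetration \(q\). Specifically, I must check three things:
\begin{enumerate}[label=\textup{(\alph*)}]
\item the sign \(B_s^-<0\) in the stated orientation;
\item that the HHY ordinary-cycle construction genuinely applies to the smooth polynomial continuation of the left branch across \(x=1\);
\item that one common choice of \(q\) makes both \(Q'(s_0)=x'(s_0)\) and \(L_0=1\).
\end{enumerate}
I would attempt this by writing both normalizations through the single reference post-grazing transition. Any residual positive factor would only rescale \(c\) and \(d\) by positive constants, and by Remark~\ref{rem:coordinates} this leaves \(\sgn(cd)\) invariant.
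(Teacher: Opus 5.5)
Your shape argument, the seam computation $\kappa_\eps=\tfrac23(1-\eps)$, $\beta_{\rm bio}=\tfrac23p'_-(1)$, and the sign chain through Corollary~\ref{cor:lambda-sign} all match the paper. The gap is the step you call the main obstacle. It arises because you treat $Q'(s_0)=x'(s_0)$ and $L_0=1$ as normalizations you may impose. They are not normalizations. By (2.10)--(2.11), $q$ is the incoming $I$-value pulled back to $\Pi$, and $dI(z_*)=dh(z_*)=dx$ fixes its first-order scale and orientation. So $Q'(s_0)$ and $L_0$ are determined numbers that must be computed. A chart obtained by transporting the outgoing $I$ back through the post-grazing map does make $L_0=1$ trivially. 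However, it is not the moving penetration coordinate: its threshold leaves $q=0$ once $a(p)\ne0$, cf.\ Remark~\ref{rem:moving}.

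The missing idea is that neutrality forces both identities at once. Take $\Pi=\Sigma_1$, and let $A:\Sigma_2\to I_{\rm in}$ and $C:I_{\rm out}\to\Sigma_1$ be the adjacent regular maps. The reference grazing passage is the identity in $I$, because $X^-I=0$. Hence $Q=A\circ F^-$, $C\circ A=(B^-)^{-1}$, and $A\circ B^-=J\circ x$, where $J(x_0)=I(x_0,g(x_0))$ and $J'(1)=dI(z_*)(1,g'(1))=1$. Differentiating at the base orbit gives
\[
 Q'(s_0)=A'F^-_s=x'(s_0)\,e^{\alpha_--\alpha_+},
 \qquad
 L_0=Q'C'=\frac{Q'}{A'B^-_s}=e^{\alpha_--\alpha_+}.
\]
Only $\alpha_--\alpha_+=\lambda^-(1)=0$, i.e.\ $F^-_s(s_0)=B^-_s(s_0)=-e^{\alpha_+}$, yields $Q'(s_0)=x'(s_0)$ and $L_0=1$ in the one admissible coordinate. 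Substituting into (8.2), and into (2.16) with $\kappa(0)=2/3$, then gives the displayed formulas for $c$ and $d$.

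Your fallback through Remark~\ref{rem:coordinates} does not repair this. First, the proposition asserts the explicit values of $c$ and $d$, not only $\sgn(cd)$. Second, even for the signs you need $Q'(s_0)>0$ (for Corollary~\ref{cor:lambda-sign}) and $L_0>0$ (for $d>0$). You assume these via ``orientation preserved'' rather than derive them. The orientations of $s$ and of $q$ are fixed by HHY and by the penetrating side $q>0$, so these positivity facts must come from the identities above. Specifically, they follow from $A'B^-_s=J'(1)x'(s_0)>0$ and $F^-_s/B^-_s=e^{\alpha_--\alpha_+}>0$.
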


\begin{proof}

At the base point,
\[
 B_*(x)=\frac{3-x}{2}\ge1,\qquad
 p'_{L,*}(x)=\frac{3-2x}{2}\ge\frac12,\qquad
 p''_{L,*}(x)=-1
\tag{9.23}
\]
on \([0,1]\).  These strict margins persist along the implicit branch.
At the two certified parameter points, the corresponding strict margins are instead
given directly by \((9.22\mathrm b)\)--\((9.22\mathrm c)\).
Hence \(p(0)=0\), \(p(x)>0\) for \(x>0\), and the response is strictly
increasing and concave before saturation.  Both coordinate axes are
invariant, and the vector field points into the strip \(0\le x\le3\) along
\(x=3\).

At the tangent point \(y_*=g(1)=2/3\),
\[
\begin{aligned}
 p'_-(1)&=1+a_0+a_1+\varrho,\qquad p'_+(1)=0,\\
 \beta_{\rm bio}
 &=\frac23(1+a_0+a_1+\varrho),\\
 \kappa_\eps&=\frac23(1-\eps).
\end{aligned}
\tag{9.24}
\]
Thus \(\beta_{\rm bio}\) is close to \(1/3\), while
\(\kappa_\eps>0\) for \(0\le\eps<1\).  For every fixed
\(0<\eps_0<1\),
\[
 \kappa_\eps\ge\frac23(1-\eps_0)>0
 \qquad(0\le\eps\le\eps_0),
\]
so the quadratic contact is uniformly nondegenerate on precisely these
restricted intervals.  Proposition~\ref{prop:grazing} gives a nonzero
\(3/2\) coefficient.

Choose the HHY upper section \(\Sigma_1\) as the return section \(\Pi\),
and use the normalized first integral in \((2.10)\) with
\(h=x-1\).  At a selected balanced-neutral parameter point,
\[
 Q'(s_0)=x'(s_0)>0,\qquad L_0=1.
\]
Consequently, in the notation of Proposition~\ref{prop:lambda},
\[
 c=\frac{\Phi_0}{2e^{\alpha_+}}(\lambda^-)'(1),
 \qquad
 d=\frac{4\sqrt3}{3}\,\beta_{\rm bio},
\]
and hence \(\sgn(cd)=\sgn (\lambda^-)'(1)\).

It remains to verify the displayed transfer formulas.
Let \(\Sigma_2\) be the lower HHY section and write
\(F^-,B^-:\Sigma_1\to\Sigma_2\) for the long forward map through the
entry--exit block and the smooth-reference backward turn map, respectively.
Let \(A:\Sigma_2\to I_{\rm in}\) and
\(C:I_{\rm out}\to\Sigma_1\) be the adjacent regular maps.  The reference
grazing passage is the identity in the \(I\)-coordinate because
\(X^-I=0\).  Therefore
\[
 Q=A\circ F^-,\qquad
 C\circ A=(B^-)^{-1},\qquad
 A\circ B^-=J\circ x,
 \qquad J(x_0):=I(x_0,g(x_0)).
\]
At the grazing point \(z_*=(1,g(1))\), the normalization
\(dI(z_*)=dh(z_*)=dx\) gives
\[
 J'(1)=dI(z_*)\,(1,g'(1))=1.
\]
HHY use the orientation
\(-F_s^-=e^{\alpha_-}\), \(-B_s^-=e^{\alpha_+}\).  Neutrality gives
\(\alpha_- -\alpha_+=\lambda^-(1)=0\), and hence
\(F_s^-(s_0)=B_s^-(s_0)=-e^{\alpha_+}\).  Differentiating the preceding
composition identities at the base orbit yields
\[
 Q'(s_0)=A'F_s^-=A'B_s^-=J'(1)x'(s_0)=x'(s_0)>0
\]
and
\[
 L_0=(Q\circ C)'=Q'C'
 =\frac{Q'}{A'B_s^-}=1.
\]
Substitution into \((8.2)\) gives the displayed formula for \(c\).
Equation \((2.16)\), together with \(\kappa_0=2/3\), gives the formula
for \(d\).  Finally \(\beta_{\rm bio}>0\) by \((9.24)\), proving the sign
identity.
\end{proof}

\begin{proposition}[Biological-to-canonical rank transfer]
\label{prop:bio-rank-transfer}
Let \(\bar\mu_\varrho\) be either a sufficiently small nonzero
balanced-neutral parameter point
\((A_0(\varrho),A_1(\varrho))\) on the analytic branch or the unique
parameter point in \(\mathcal B_\pm\) at \(\varrho=\pm1/20\).  At
\(\bar\mu_\varrho\), the differential of the
canonical coefficients factors as
\[
 D_\mu(a,b)
 =
 \begin{pmatrix}
 C_0&0\\ C_\chi&C_1
 \end{pmatrix}
 D_\mu(\chi,\lambda^-),
 \qquad C_0C_1\ne0.
\]
In particular, the biological rank-two condition \((9.7)\) transfers to
the canonical unfolding condition \((2.14)\).
\end{proposition}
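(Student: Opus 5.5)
We prove the factorization by writing the canonical coefficients \(a(p)=S(0,p)\) and \(b(p)=S_q(0,p)\) in terms of the HHY difference map \(\delta_{\rm H}=F^--B^-\) on \(\Sigma_1\). We work at \(\eps=0\), where \((2.14)\) is imposed. By \((8.6)\), the reference return in the source coordinate \(s\) is
\[
 P^{\rm ref}(s)-s=M(s,\mu)\,\delta_{\rm H}(s,\mu),
 \qquad M(s_0,\bar\mu_\varrho)=\frac{1}{B_s^-(s_0,0)}\ne0 .
\]
We then conjugate by the moving penetration chart \(q=Q_\mu(s)\). Remark~\ref{rem:pullback} supplies the factor \(\partial_sQ\), giving
\[
 S(q,\mu)=\widetilde M(q,\mu)\,\delta_{\rm H}\bigl(Q_\mu^{-1}(q),\mu\bigr),
\]
with \(\widetilde M\ne0\) near the base point. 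At the base point \(\delta_{\rm H}=\partial_s\delta_{\rm H}=0\), since the cycle is balanced and neutral. Hence the \(\mu\)-derivatives of \(a\) and \(b\) involve only the first \(\mu\)-variations of \(\delta_{\rm H}(s_\mu,\mu)\) and \(\partial_s\delta_{\rm H}(s_\mu,\mu)\), where \(s_\mu=Q_\mu^{-1}(0)\). Every term in which a derivative falls on \(\widetilde M\), on \(\partial_sQ\), or on the moving point \(s_\mu\) is multiplied by a vanishing base value.

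The one exception is the variation of \(s_\mu\) inside \(\partial_s\delta_{\rm H}\) in the \(b\)-row. It produces \(\partial_{ss}\delta_{\rm H}\,\partial_\mu s_\mu\), which is not automatically zero. We plan to absorb it into the off-diagonal entry as follows. Since \(a\) vanishes to first order exactly when \(\delta_{\rm H}(s_\mu,\mu)\) does, \(\partial_\mu s_\mu\) enters only through the grazing graph \(Q_\mu(s)=0\). That graph is the tangency label \(x_0=1\), and it is independent of the balance and neutrality values. So we will show that \(\partial_\mu s_\mu\) is controlled by the same label derivative used in \((8.4)\). Any remaining contribution that is not proportional to \(D\chi\) or \(D\lambda^-\) must cancel. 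This bookkeeping is the main obstacle. We expect to settle it by expressing the first \(\mu\)-variations at the \emph{fixed} label \(x_0=1\) rather than at fixed \(s\).

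Next we identify the two first variations with the HHY characteristic functions. At the fixed tangent label, \(\delta_{\rm H}(s_\mu,\mu)\) is the endpoint-balance mismatch. The HHY construction (their Section~4.2, applied to the smooth minus-reference family as in Proposition~\ref{prop:lambda}) identifies it as a nonzero multiple \(C'_0\) of \(\chi(1)\), because the entry--exit slow connection closes exactly when \(\chi=0\). This gives \(\partial_\mu a=C_0\,\partial_\mu\chi(1)\) with \(C_0\ne0\), and no \(\lambda^-\) term. For the multiplier row we use \((8.3)\):
\[
 \partial_s\delta_{\rm H}=-\Phi(\alpha_-,\alpha_+)(\alpha_--\alpha_+),
\]
and at the fixed label \(\alpha_--\alpha_+=\lambda^-(1)+(\text{terms vanishing with }\chi)\). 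Differentiating at \(\bar\mu_\varrho\), where \(\alpha_-=\alpha_+\), gives
\[
 \partial_\mu b=C_1\,\partial_\mu\lambda^-(1)+C_\chi\,\partial_\mu\chi(1),
 \qquad C_1=-\widetilde M\,\Phi_0\,(\partial_sQ)^{-1}\ne0 ,
\]
where \(\Phi_0>0\) comes from \((8.3)\) and \(\widetilde M\) and \(\partial_sQ\) are nonzero as in Proposition~\ref{prop:bio-geometry-sign}. The constant \(C_\chi\) collects the balance-dependent and moving-point terms from the previous paragraph. This is the lower-triangular factorization.

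Finally, \(\det D_\mu(a,b)=C_0C_1\det D_\mu(\chi,\lambda^-)\). This is nonzero by \((9.7)\): along the branch this follows from Proposition~\ref{prop:bio-analytic-branch}, and at the two certified points from \((9.22\mathrm b)\). Translating \(\mu\) about \(\bar\mu_\varrho\) then gives \((2.14)\).
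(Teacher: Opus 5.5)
Your lower-triangular architecture is right, and the multiplier row is essentially sound once you use the fixed label. At \(x_0=1\), the map \(\mu\mapsto\partial_s\delta_{\rm H}(s_\mu,\mu)=-\Phi\,(\alpha_--\alpha_+)\) is a single composite function. The term \(\partial_{ss}\delta_{\rm H}\,\partial_\mu s_\mu\) is therefore part of its total derivative, and no cancellation is needed. Your claim that leftover contributions ``must cancel'' is unsupported, but it is also unnecessary. The paper handles this row more economically. On \(\{\chi=0\}\) the label-one orbit is a reference fixed point, so \((8.3)\) and multiplier invariance give \(b=r\lambda^-\) there. Since \(\lambda^-(1)=0\) at \(\bar\mu_\varrho\), the form \(\mathrm d b-r\,\mathrm d\lambda^-\) annihilates \(\ker\mathrm d\chi\), so it is a multiple of \(\mathrm d\chi\). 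That is exactly the rigorous content of your ``terms vanishing with \(\chi\).''

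The genuine gap is \(C_0\ne0\). You justify \(\mathrm d a=C_0\,\mathrm d\chi\) with \(C_0\ne0\) by saying the slow connection closes exactly when \(\chi=0\). Coincidence of zero sets only shows that the differential of the closure defect is \emph{some} multiple of \(\mathrm d\chi\). It cannot exclude a zero multiple, since a defect of order \(\chi^2\) has the same zero set. Without \(C_0\ne0\) the rank transfer fails.

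The missing idea is an explicit derivative of the balance relation at the exit endpoint:
\begin{enumerate}
\item On the invariant axis, entry--exit balance is level-set equality of \(H_\mu(y)=y-g(0;\mu)\log y\). Hence \(\chi(1;\mu)=H_\mu\bigl(E_\mu(y_\omega(\mu);\mu)\bigr)-H_\mu(y_\alpha(\mu))\).
\item Differentiate at balance. There the two arguments coincide, so the pure-parameter terms cancel, and \(\mathrm d\bigl(E_\mu(y_\omega)-y_\alpha\bigr)=\mathrm d\chi/H'_{\bar\mu_\varrho}(y_\alpha)\).
\item The factor is nonzero: \(H'(y_\alpha)=1-g(0)/y_\alpha\ne0\), because \(y_\alpha<g(0)\) lies strictly on the repelling side.
\item Transporting this endpoint mismatch to \(a\) through the regular flow maps and the penetration chart contributes a further nonzero factor \(\sigma\). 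This gives \(C_0=\sigma/H'(y_\alpha)\ne0\).
\end{enumerate}
Supplying this computation, or an equivalent one, closes your argument.
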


\begin{proof}
The point is that rank in the two characteristic functions is the rank
required for the canonical coefficients, rather than merely a convenient
surrogate.  Take differentials at
\((\eps,\mu)=(0,\bar\mu_\varrho)\).  Let \(y_\omega(\mu)\) and
\(y_\alpha(\mu)\) be the upper and lower \(x=0\) hits of the smooth
minus-reference orbit.  Define
\[
 H_\mu(y)=y-g(0;\mu)\log y,
\qquad
 H_\mu(E_\mu(y;\mu))=H_\mu(y),
\quad E_\mu(y;\mu)<g(0;\mu)<y.
\tag{9.25}
\]
At balance,
\(E_{\bar\mu_\varrho}(y_\omega(\bar\mu_\varrho);\bar\mu_\varrho)
=y_\alpha(\bar\mu_\varrho)\).  Substitution of \(y=y_\omega(\mu)\) in
\((9.25)\) and subtraction of \(H_\mu(y_\alpha(\mu))\) give the identity
\[
 \chi(1;\mu)
 =H_\mu\!\left(E_\mu(y_\omega(\mu);\mu)\right)
  -H_\mu(y_\alpha(\mu)).
\]
Differentiate this identity at balance.  The two arguments then coincide,
so the identical pure-parameter terms
\(\partial_\mu H_\mu(y_\alpha)\) cancel; the remaining chain-rule terms give
\[
 \dd\!\left(E_\mu(y_\omega(\mu);\mu)-y_\alpha(\mu)\right)
 =\frac1{H'_{\bar\mu_\varrho}(y_\alpha(\bar\mu_\varrho))}\,\dd\chi.
\tag{9.26}
\]
The denominator is nonzero because
\(y_\alpha(\bar\mu_\varrho)<g(0;\bar\mu_\varrho)\).

Transport this endpoint mismatch through the regular flow maps and the
penetration coordinate.  There is a smooth
\(\mathcal A_{\rm out}(h,\mu)\) with
\[
 a(\mu)=\mathcal A_{\rm out}(h(\mu),\mu),\qquad
 \mathcal A_{\rm out}(0,\mu)=0,\qquad
 \partial_h\mathcal A_{\rm out}(0,\bar\mu_\varrho)=\sigma\ne0.
\]
Therefore
\[
 \dd a=C_0\,\dd\chi,\qquad
 C_0=\frac{\sigma}
 {H'_{\bar\mu_\varrho}(y_\alpha(\bar\mu_\varrho))}\ne0.
\tag{9.27}
\]

On the balanced hypersurface \(\chi=0\), let \(F^-,B^-\) be the HHY
forward and backward section maps and \(s_0(\mu)\) the orbit with label
\(x_0=1\).  The factorization \((8.3)\) and conjugacy invariance of the
fixed-point multiplier give
\[
 b(\mu)
 =-\frac{\Phi(\alpha_-,\alpha_+)}
 {B_s^-(s_0(\mu),\mu)}\,\lambda^-(1;\mu)
 =:r(\mu)\lambda^-(1;\mu),
\qquad r(\bar\mu_\varrho)\ne0.
\tag{9.28}
\]
At the balanced point \(\lambda^-=0\).  Differentiating along vectors tangent
to \(\{\chi=0\}\) shows that
\(\dd b-r(\bar\mu_\varrho)\dd\lambda^-\) annihilates
\(\ker\dd\chi\), hence is a multiple of \(\dd\chi\).  Thus
\[
 D_\mu(a,b)(\bar\mu_\varrho)
 =
 \begin{pmatrix}
 C_0&0\\ C_\chi&C_1
 \end{pmatrix}
 D_\mu(\chi,\lambda^-)(\bar\mu_\varrho),
\qquad C_1=r(\bar\mu_\varrho)\ne0.
\tag{9.29}
\]
The triangular factor is invertible.  Equation \((9.16)\), or
\((9.22\mathrm b)\) at either explicit parameter point, together with
\((9.29)\), proves canonical rank two.
\end{proof}

\begin{proposition}[Recognition of the abstract singularity]
\label{prop:bio-recognition}
Along the branch of Proposition~\ref{prop:bio-analytic-branch}, and
pointwise at either parameter point of Proposition~\ref{prop:bio-validated-centers},
the shifted coordinates below put the Gause family into the entry--exit
form \((2.1)\) with the signs in \((2.2)\).
After shrinking the local boxes and parameter neighborhoods, the transverse
hits, smooth minus-reference family, regular flow tubes, singular itinerary,
and local return closure satisfy the structural model hypotheses used in
Theorem~\ref{thm:main}.
\end{proposition}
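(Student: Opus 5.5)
The proof will exhibit explicit coordinates for each module required by Definition~\ref{def:configuration} and then check each hypothesis in turn, using only the analytic and validated data already established in Propositions~\ref{prop:bio-analytic-branch}--\ref{prop:bio-rank-transfer}.

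\textbf{Entry--exit block.} Near \(x=0\) take \(u=x\) and \(v=y-g(0;\mu)\), possibly followed by a common time reversal to fix orientation. Write the predator equation as \(\dot y=y(-\eps+p_L(x))=-\eps y+x\,yB(x)\). This has the form \(\dot v=\eps g+u h_0\) with \(g=-(v+g(0))\), which is uniformly negative because \(y\) is bounded away from zero on the passage set. The prey equation is \(\dot x=x[(1-x/3)-B(x)y]=u f\) with \(f(0,v)=1-B(0)y=-B(0)v\); this uses \(g(0)=1/B(0)\). After the time reversal needed to make the balance integrand \(f/g\) match \((2.2)\), \(f\) changes sign exactly at the exchange point \(y=g(0)\) (the value \(2/3\) at the base). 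The required sign conditions hold with constants depending only on \(\inf y\) and \(B(0)\ge1\). Both quantities are uniform along the branch after shrinking, and at the two certified points they follow from \(B\ge1\) in Proposition~\ref{prop:bio-validated-centers}.

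The endpoint data come from the same verification. The balance integral \((2.3)\) is \(\chi(1)=0\) rewritten through \(H_\mu\) in \((9.25)\). The layer fibers \((2.5)\) are the desingularized orbits \((9.9)\), and their endpoints \(y_\alpha,y_\omega\) lie on opposite sides of \(g(0)\). The transverse axis hits give the local diffeomorphisms \(\Lambda_i\): along the branch this follows from \(1-e^{T_\pm}\ne0\) in \((9.19)\), and at the certified points from the certified velocity bounds.

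\textbf{Grazing box.} Take \(h=x-1\), let \(X^-\) be the polynomial continuation of the left branch, and let \(X^+\) be the cap field. Continuity of \(p\) at \(x=1\) gives \((2.7)\). Contact, curvature, and the nonzero jump are then \((9.24)\): \(\kappa_\eps>0\) on restricted \(\eps\)-ranges and \(\beta_{\rm bio}>0\).

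\textbf{Regular tubes and singular itinerary.} At \(\eps=0\), the remaining arcs of the singular cycle are compact pieces of the desingularized orbit \((9.13)\), or of its perturbation. These pieces avoid \(x=0\) except at their transverse endpoint hits, and \(p_L\) does not vanish on them. The field is therefore nonzero and every section is transverse, with bounds that persist by compactness and continuity. For the moving-penetration pullback \((2.11)\) I will use \(Q'(s_0)=x'(s_0)>0\) from Proposition~\ref{prop:bio-geometry-sign}. Closure and neutrality of the reference return, with \(c\ne0\), come from \(\chi=\lambda^-=0\) together with Proposition~\ref{prop:lambda} and Corollary~\ref{cor:lambda-sign}.

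\textbf{Expected obstacle: the isolation and first-return requirements of (REG).} I must show that nearby orbits meet \(\Pi\) exactly once per circuit and that the entry--exit and grazing boxes are disjoint and isolating. I plan to shrink the boxes around the compact singular cycle, which is possible because the tangency at \(x=1\) lies a fixed distance from the axis \(x=0\). The argument is then a compactness argument, using the invariance of both coordinate axes and the strip \(0\le x\le3\) to exclude escapes.

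Finally, I must ensure that the \(C^{r+4}\) regularity demanded by Theorem~\ref{thm:entryexit} holds, which is immediate since the data are analytic. Every quantitative input at \(\varrho=\pm1/20\) will be read from the enclosures in \((9.22\mathrm b)\)--\((9.22\mathrm c)\); the argument does not rely on any continuation along the branch.
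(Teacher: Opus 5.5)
Your proposal is correct and follows essentially the paper's route: you use the same shift \(v=y-g(0;\mu)\) (note \(g(0)=1/B(0)=1/p_L'(0)\), which is the paper's \(y_c\)), the same computation \(g=-(v+y_c)<0\), \(f(0,v)=-B(0)v\), and the same compactness-and-shrinking argument around the singular cycle, with the return section of Proposition~\ref{prop:bio-geometry-sign}, to obtain \textup{(REG)}. One slip should be removed: no time reversal is needed, because your computed form already satisfies \((2.2)\), whereas reversing time would make \(g\) positive (and would leave \(f/g\) unchanged), so that remark should simply be dropped.
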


\begin{proof}
Put
\[
 y_c(\mu)=\frac1{p'_L(0;\mu)}
\]
and use the shifted entry--exit coordinates \(u=x\),
\(v=y-y_c(\mu)\).  System \((9.1)\) is then locally of the exact form
\[
 \dot u=u\left[
 1-\frac u3-\frac{p(u)}u\bigl(v+y_c(\mu)\bigr)\right],
\qquad
 \dot v=-\eps\bigl(v+y_c(\mu)\bigr)
 +u\left[\bigl(v+y_c(\mu)\bigr)\frac{p(u)}u\right].
\tag{9.30}
\]
At \(u=\eps=0\), the slow coefficient
\(-[v+y_c(\mu)]\) is uniformly negative, while the normal coefficient is
\(-p'_L(0;\mu)v\), so the signs in \((2.2)\) hold after shrinking the
shifted \(v\)-interval.
Take the HHY upper section \(\Sigma_1\) as \(\Pi\), and choose disjoint closed
entry--exit and grazing boxes.  The left polynomial supplies the smooth
minus-reference family.  The complementary arcs from the exit face of the
entry--exit box to the grazing entrance and from the grazing exit back to
\(\Sigma_1\) are compact; the explicit base field is nonzero there and all
boundary hits are transverse.
At either explicit parameter point, the validated lower hit has \(y>0.0596\), and
\(\dot y=y\) on the desingularized layer arc; hence that arc is likewise
regular away from the selected grazing box.  Its two axis hits have the
strict transverse velocity bounds stated after \((9.22\mathrm d)\).
Compactness gives uniform field and
transversality bounds
and bounded flight times.  Shrinking the boxes and parameter neighborhood
then excludes equilibria and unintended \(\Sigma_1\)-hits and, by flow-box
uniqueness, preserves \eqref{eq:itinerary}.  Thus every nearby selected orbit
meets \(\Pi=\Sigma_1\) exactly once per circuit, verifying \textup{(REG)};
the selected base orbit supplies local return closure.
\end{proof}

\begin{proof}[Proof of Theorem~\ref{thm:biology}]
Proposition~\ref{prop:bio-analytic-branch} supplies the analytic
balanced-neutral branch, the nonsingular characteristic-function Jacobian,
and the simple smooth curvature with its certified sign.  By continuity the
Jacobian remains nonsingular after reducing \(\varrho_0\).
Proposition~\ref{prop:bio-validated-centers} independently supplies the
two explicit nonzero singular parameter points, without asserting a validated continuation
from \(\varrho=0\) to either endpoint.
Proposition~\ref{prop:bio-geometry-sign} supplies biological admissibility,
uniform quadratic grazing, and
\(d>0\), \(\sgn c=-\sgn\varrho\).  Proposition~\ref{prop:bio-rank-transfer}
transports the certified rank to the canonical coefficients, while
Proposition~\ref{prop:bio-recognition} verifies the remaining local
entry--exit, reference-family, itinerary, and closure hypotheses.  Together
these modules give \((9.4)\)--\((9.7a)\) and every model-specific
hypothesis of Theorem~\ref{thm:main}.
\end{proof}

\begin{corollary}[Biological two-versus-three realization]
\label{cor:biology-cycles}
Fix a sufficiently small nonzero \(\varrho\) on the analytic branch, or
take either certified parameter point at \(\varrho=\pm1/20\).  In the selected local return
neighborhood, the cyclicity bound is three if \(\varrho>0\) and two if
\(\varrho<0\).  For every sufficiently small fixed \(\eps>0\), response
parameters arbitrarily close to the corresponding distinguished
positive-\(\eps\) parameter point of the rank-two unfolding realize,
respectively, three or two simple limit cycles in that neighborhood.  The
realizing responses may be chosen
positive, strictly increasing and strictly concave before the cap, with
\(p'_-(1)>0=p'_+(1)\).
\end{corollary}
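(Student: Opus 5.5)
The corollary combines Theorem~\ref{thm:biology} with Theorem~\ref{thm:main}. I would prove it in three steps: identify the sign class, apply the abstract count and sharpness, and then carry the admissibility of the response through the realizing paths.

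\textbf{Sign class.} Fix either a small nonzero \(\varrho\) on the branch of Proposition~\ref{prop:bio-analytic-branch}, or one of the certified points of Proposition~\ref{prop:bio-validated-centers}. Translate the coordinates so that \(\mu=(a_0,a_1)-\bar\mu_\varrho\). Theorem~\ref{thm:biology} then says that (EE), (GR), (REG), (BAL) and (UNF) hold at \((\eps,\mu)=(0,0)\). The ingredients are:
\begin{itemize}
\item Proposition~\ref{prop:bio-recognition} for the structural hypotheses;
\item Proposition~\ref{prop:bio-geometry-sign} for \(c\ne0\) and \(d>0\);
\item Proposition~\ref{prop:bio-rank-transfer} for the rank-two condition \((2.14)\).
\end{itemize}
By \((9.7\mathrm a)\), \(\sgn(cd)=-\sgn\varrho\). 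So \(\varrho>0\) gives the opposite-sign class \(cd<0\), and \(\varrho<0\) gives \(cd>0\). At the certified points I would read the signs from the intervals \((9.22\mathrm b)\) and the positive \(\beta_{\rm bio}\) in \((9.22\mathrm c)\), not from the asymptotic expansion \((9.22)\).

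\textbf{Bound and attainment.} Theorem~\ref{thm:main}(ii),(iv) gives the upper bound: three when \(\varrho>0\) and two when \(\varrho<0\), counting limit cycles in the selected return neighborhood. Theorem~\ref{thm:main}(iii) supplies the curve \(\mu_*(\eps)\). This is the ``distinguished positive-\(\eps\) parameter point'' of the statement. Along realizing paths \(\mu\to\mu_*(\eps)\), the theorem produces three, respectively two, simple zeros of \(\Delta\). Part (iv) then turns these zeros into distinct limit cycles for each fixed small \(\eps>0\).

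\textbf{Admissibility, the main obstacle.} I expect this step to be the hardest: the realizing responses must stay positive, strictly increasing and strictly concave before the cap, with \(p'_-(1)>0=p'_+(1)\).

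The plan rests on the fact that all realizing parameters lie in an arbitrarily small neighborhood of \(\bar\mu_\varrho\). This holds because \(\mu_*(\eps)=O(\eps)\) and the realizing paths converge to \(\mu_*(\eps)\).

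At \(\bar\mu_\varrho\) the shape conditions hold with strict margins. On the analytic branch these margins come from \((9.23)\), which persist along the branch. At the certified points they come from the validated bounds in \((9.22\mathrm c)\): \(\min p'>0\) and \(\max p''<0\) on \([0,1]\), together with \(B\ge1\).

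These quantities depend continuously on \((a_0,a_1)\), uniformly on the compact interval \([0,1]\). Also \(p'_-(1)=1+a_0+a_1+\varrho\) is affine in the parameters, and \(p'_+(1)=0\) identically. Hence every strict inequality survives on a fixed neighborhood of \(\bar\mu_\varrho\).

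The remaining care is the order of shrinking. First fix this neighborhood. Then choose \(\eps\) small enough that \(\mu_*(\eps)\) lies well inside it. Only then take the realizing parameters close enough to \(\mu_*(\eps)\) to stay inside as well. With this ordering, admissibility and the cycle count hold simultaneously.
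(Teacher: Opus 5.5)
Your proposal is correct and follows the same route as the paper. The identity \(\sgn(cd)=-\sgn\varrho\) from Theorem~\ref{thm:biology} fixes the sign class, Theorem~\ref{thm:main} gives the bound and the rank-two attainment, and openness of the strict shape conditions near the balanced response keeps the realizing paths admissible; your explicit order of shrinking (admissible neighborhood, then \(\eps\), then the realizing parameters) only spells out what the paper's short proof leaves implicit.
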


\begin{proof}
By Theorem~\ref{thm:biology},
\(\sgn(cd)=-\sgn\varrho\).  The bounds follow from
Theorem~\ref{thm:main}; its rank-two sharpness conclusion gives the asserted
simple physical cycles for every sufficiently small fixed \(\eps>0\).
These strict admissibility conditions are open near the balanced response,
so the rank-two attaining paths can be chosen inside that neighborhood.
\end{proof}

\begin{remark}[Biological coordinates]
The two unfolding parameters can be replaced by endpoint response slopes:
\[
 p'(0)=1-a_0,\qquad
 p'_-(1)=1+a_0+a_1+\varrho.
\tag{9.31}
\]
The affine change has determinant \(-1\).  Thus the rank-two unfolding is
equivalently expressed by the low-density attack slope and the
pre-saturation marginal feeding slope.
\end{remark}

\subsection{Positive-parameter numerical illustration}
\label{subsec:bio-positive-numerics}

We finally illustrate, rather than prove, the physical unfolding at
\(\eps=0.01\).  The computations use the section \(x=0.05\),
\(\dot x>0\), logarithmic state variables, and explicit event handling for
the itinerary
\[
 p_L\longrightarrow x=1\longrightarrow p_R=1
 \longrightarrow x=1\longrightarrow p_L.
\tag{9.32}
\]
If \(\zeta_{\rm gr}\) denotes the section value of \(\log y\) whose smooth
left-reference orbit grazes \(x=1\), the plotted moving coordinate and
physical displacement are
\[
 q=\zeta_{\rm gr}-\log y,\qquad
 \Delta_{\rm num}(q)=\log y-P(\log y).
\]
Thus \(q>0\) is the penetrating side and \(q<0\) is the nonpenetrating
side.  This numerical coordinate is not used to certify the analytic return
germ.
The two parameter choices are
\[
\begin{aligned}
 \varrho=1/20:&\quad
 (a_0,a_1)=(-0.5014790869078154,-0.0343916980511351),\\
 \varrho=-1/20:&\quad
 (a_0,a_1)=(-0.4980524993175209,0.0360172862534296).
\end{aligned}
\]
These are nearby unfolding parameters, not the singular balanced-neutral points in
\(\mathcal B_\pm\).  Direct minimization of the two sub-cap response
polynomials gives
\[
\begin{array}{c|cc}
\varrho&\min_{[0,1]}p'(x)&\max_{[0,1]}p''(x)\\ \hline
 1/20&0.514129215041\ldots&-0.840524966020\ldots\\
-1/20&0.487964786936\ldots&-0.957154968126\ldots
\end{array}
\tag{9.32a}
\]
so both actual plotted responses are increasing and strictly concave below
the cap.  This algebraic input-shape check does not validate the computed
cycles.  The computed fixed points are
\[
\begin{array}{c|c|c|c|c|c}
\varrho&q&\max x&\text{period}&P'&\text{itinerary}\\ \hline
 1/20&-6.3251925\times10^{-2}&0.9618695&369.6452&1.0018580&\text{nonpenetrating}\\
 1/20& 1.7335246\times10^{-6}&1.0000010&377.9672&0.9992197&\text{penetrating}\\
 1/20& 7.9367233\times10^{-6}&1.0000048&377.9680&1.0006119&\text{penetrating}\\
-1/20&-5.6216264\times10^{-3}&0.9965971&376.6046&0.9996455&\text{nonpenetrating}\\
-1/20& 1.4199781\times10^{-4}&1.0000863&377.3617&1.0105841&\text{penetrating}
\end{array}
\tag{9.33}
\]
where \(P'\) is a finite-difference Poincar\'e multiplier.  Thus the
computed stability order is unstable--stable--unstable in the three-cycle
case and stable--unstable in the two-cycle case.

Every stored zero is enclosed by an opposite-sign displacement bracket.
The brackets persist when the relative ODE tolerance is multiplied by
\(1\), \(0.3\), and \(0.1\); recomputation on \(x=0.04\) and
\(x=0.06\) gives a maximum return residual below \(4.5\times10^{-11}\),
and the maximum multiplier spread over three difference steps is below
\(10^{-6}\).  These are ordinary floating-point robustness checks.  They
do not replace the analytic local-cyclicity theorem, certify the displayed
positive-\(\eps\) fixed points, or give a global cycle count.

\ifsiadsreview
  \begin{figure}[p]
\else
  \begin{figure}[!htbp]
\fi
\centering
\includegraphics[width=\textwidth]{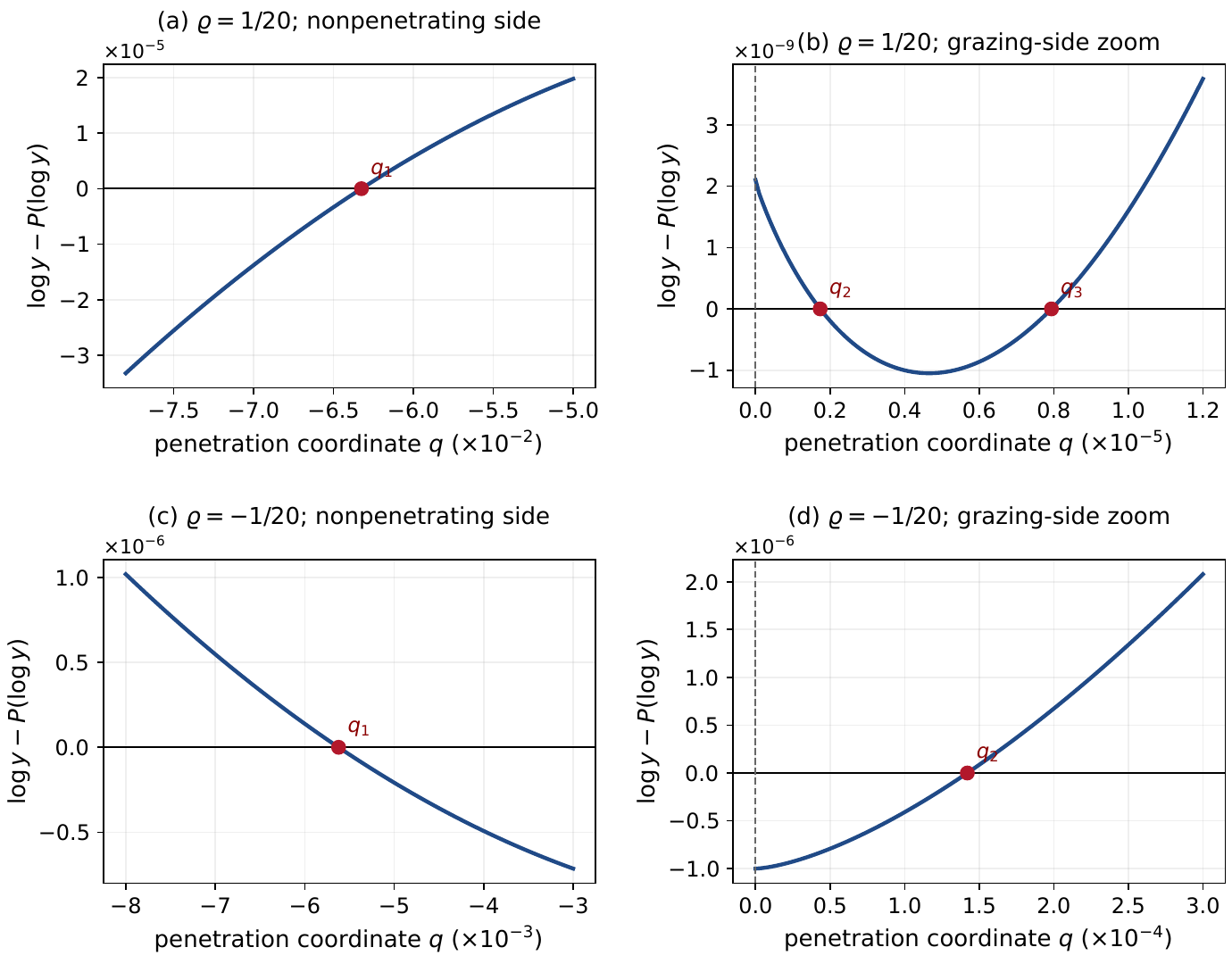}
\caption{Physical cutoff-Gause Poincar\'e displacements at \(\eps=0.01\)
from ordinary event-driven numerical integration.  Here
\(q=\zeta_{\rm gr}-\log y\) and the ordinate is
\(\Delta_{\rm num}(q)=\log y-P(\log y)\), so \(q<0\) is nonpenetrating and
\(q>0\) is penetrating.  Panels~(a)--(b) use
\(\varrho=1/20\): panel~(a) shows the nonpenetrating side and panel~(b)
shows the grazing-side zoom, together containing one computed
nonpenetrating zero and two computed penetrating zeros.  Their parameters are
\((a_0,a_1)=(-0.5014790869078154,\allowbreak
-0.0343916980511351)\).  Panels~(c)--(d) give the corresponding
nonpenetrating and grazing-side views for \(\varrho=-1/20\),
\((a_0,a_1)=(-0.4980524993175209,\allowbreak
0.0360172862534296)\), with one computed zero of each type.  Both responses
satisfy the shape inequalities in \((9.32\mathrm a)\).  In panels~(b) and
(d), the dashed line is the moving grazing threshold \(q=0\); red filled
circles in all panels mark the fixed points listed in \((9.33)\), with the
\(q_j\) labels ordered separately within each value of \(\varrho\).  Blue
solid curves join sampled displacement values.  Separate scales are necessary because
the penetrating zeros lie much closer to the threshold.  This vector figure
is an ordinary numerical illustration, not a validated orbit count or an
interval certificate.}
\label{fig:bio-positive-displacements}
\end{figure}
\FloatBarrier

\begin{remark}[Scope of the application]
The quartic response is mathematically and qualitatively biologically
admissible, but it is not empirically calibrated here.  The theorem supports
the Gause equations, small-death interpretation, saturation mechanism, and
finite-cyclicity application; it does not claim that the codimension-two
parameter choice has already been observed in a specific ecosystem.  The
exact two- or three-cycle realizations in
Corollary~\ref{cor:biology-cycles} occur at nearby unfolding parameters, not
at the singular balanced-neutral point itself, and are local rather than global cycle
counts.
\end{remark}

\section{Discussion and open problems}

The theorem identifies a reusable composite singularity.  The entry--exit
module supplies a parameter-uniform physical passage, the grazing module
supplies a ramified \(3/2\) correction, and the exact moving penetration
coordinate permits their derivative-controlled composition.  At a neutral
return, the invariant sign of \(cd\) then distinguishes two sharp local
cyclicity classes.  The polynomial benchmark shows that neither class is
empty, while the cutoff Gause family demonstrates how the intrinsic
coefficients and the rank-two unfolding can be transferred from a concrete
model.

Several extensions suggest themselves.
\begin{enumerate}
\item The entry--exit theorem is proved for the exact invariant-line form
\((2.1)\).  A coordinate-invariant recognition theorem for more general
critical curves would make the result easier to apply beyond this normal
form.
\item The entry--exit and grazing boxes are separated.  If another singular
passage lies on the grazing-to-return leg, a new transmission argument is
needed to determine the analogue of \(L_0\) and the resulting return germ.
\item Higher-order contact, higher-order branch matching, multiple switching
curves, and smooth regularization should lead to other ramified return
classes.  Determining their sharp cyclicity is a natural next problem.
\item For the cutoff Gause family, empirical calibration and a global
bifurcation atlas would complement the local realization proved here.
\end{enumerate}

Proposition~\ref{prop:lambda} uses the HHY minus-extension normalization;
equivalence with a differently normalized ambient stability functional is not
asserted.  The finite-cyclicity conclusion is local in phase space and uniform
in a small parameter neighborhood, rather than a bound for all cycles of the
global vector field.

\appendix

\section{Interval certificate for the cutoff branch}
\label{app:certificate}

This appendix records the reproducible inequalities used in
\((9.16)\) and \((9.21)\).  It is a computer-assisted interval
calculation, not a proof-assistant formalization.

\ifsiadsreview
Two independent interval implementations certify the same exact rational
brackets for the roots of \(e^t-t=5/2\),
\[
\begin{aligned}
 T_-&\in[-2.41020292977619,\,-2.41020292977618],\\
 T_+&\in[ 1.34739675103134,\, 1.34739675103135].
\end{aligned}
\tag{A.1}
\]
The first uses \texttt{mpmath.iv}; the second uses
\texttt{python-flint/Arb}, exact rational cell endpoints, and direct
interval enclosures of the endpoint tails.  Their independently computed
enclosures have the proof-critical intersections
\[
\begin{aligned}
 C_0L_1-C_1L_0
 &\in[1.138008666466789,\,1.25752795285509]>0,\\
 \mathfrak M(\delta_{\rm br}g)
 &\in[-2.640928406431336,\,-1.503880453020846]<0.
\end{aligned}
\tag{A.2}
\]
Each backend separately proves both strict signs.  The accompanying code
supplement contains the complete \(C_j,L_j,R_j\) tables, exact rational
outer bounds, recorded output, platform and package versions, file hashes,
the cross-backend intersection gate, and exact reproduction commands.
\else
The two roots of \(e^t-t=5/2\) are enclosed by
\[
\begin{aligned}
 T_-&\in[-2.41020292977619,\,-2.41020292977618],\\
 T_+&\in[ 1.34739675103134,\, 1.34739675103135].
\end{aligned}
\tag{A.1}
\]
Monotonicity of \(e^t-t\) on the two relevant half-lines, together with
opposite endpoint signs, certifies uniqueness in these intervals.

For \(j=0,1,2\), the calculation encloses
\[
\begin{aligned}
 C_j&=\int_{T_-}^{T_+}
 \bigl(\delta_j g(X_*(t))-\delta_j g(0)\bigr)\,\dd t,\\
 L_j&=\int_{T_-}^{T_+}(\delta_j g)'(X_*(t))\,\dd t,\\
 R_j&=\mathfrak M(\delta_j g).
\end{aligned}
\tag{A.2}
\]
It uses natural interval extensions with 30 decimal digits, a
500-cell composite interval Riemann sum on the central interval
\[
 [-2.41020292977618,\,1.34739675103134],
\]
and the conservative tail enclosure
\([-10^{-10},10^{-10}]\).  The omitted endpoint lengths are below
\(2\cdot10^{-14}\), and the script checks that every relevant endpoint
integrand has absolute interval upper bound below \(1000\).

The resulting intervals are
\[
\begin{array}{c|c|c|c}
j&C_j&L_j&R_j\\ \hline
0&
[-0.996428,-0.972075]&
[-1.902796,-1.885933]&
[-2.338513,-2.315763]\\
1&
[0.271001,0.293857]&
[-0.700883,-0.644927]&
[-4.115831,-4.047581]\\
2&
[0.155048,0.173401]&
[-0.600101,-0.516527]&
[-5.232193,-5.016704]
\end{array}
\tag{A.3}
\]
and interval Gaussian elimination gives
\[
\begin{aligned}
 C_0L_1-C_1L_0
 &\in[1.1380086664667,1.2575279528551],\\
 A_0'(0)&\in[-0.067090,-0.014669],\\
 A_1'(0)&\in[-0.815373,-0.631807],\\
 \mathfrak M(\delta_{\rm br} g)
 &\in[-2.6409284064314,-1.5038804530208].
\end{aligned}
\tag{A.4}
\]
Only the strict positivity of the first interval and strict negativity of
the last interval enter the proof.

The repository checkout reproduces both independent certificates by
\begin{verbatim}
python3 experiments/certify_cutoff_extension.py
python3 experiments/certify_cutoff_extension_arb.py
python3 experiments/crosscheck_cutoff_certificates.py
\end{verbatim}
In the arXiv source package, the ancillary copy is run by
\begin{verbatim}
python3 anc/certify_cutoff_extension.py
python3 anc/certify_cutoff_extension_arb.py
python3 anc/crosscheck_cutoff_certificates.py
\end{verbatim}
The reviewed local run used Python~3.11.14, \texttt{mpmath}~1.3.0, and
\texttt{python-flint}~0.9.0.  The two scripts use independent interval
implementations; the cross-check requires identical exact root brackets,
the expected strict sign from each backend, and nonempty intersections for
every reported coefficient interval.
\fi

\section{Regularity bookkeeping}

For clarity, the derivative losses used in the proof are:
\[
\begin{array}{ccl}
C^{r+4}\text{ entry--exit data}
&\longrightarrow&
C^{r+4}\text{ layer-footpoint coordinate}\\
&\longrightarrow&
C^{r+3}\text{ standard slow--fast field}\\
&\longrightarrow&
C^{r+1}\text{ prepared endpoint charts}\\
&\longrightarrow&
C^r\text{ physical passage}.
\end{array}
\tag{B.1}
\]
The post-grazing composition in \((5.2)\) consumes one derivative, so a
\(C^\ell\) ramified coefficient uses \(r=\ell+1\).  The curvature
argument takes \(\ell=2\).  Neighborhoods may depend on the prescribed
finite order; no diagonal argument asserting one common \(C^\infty\)
neighborhood is used.

\section*{Declarations}

\paragraph{Funding.}
The author received no funding for this work.

\paragraph{Competing interests.}
The author declares no competing interests.

\paragraph{Generative AI use.}
During the development and preparation of this work, the author used
\mbox{OpenAI Codex} to assist with literature discovery, proof stress-testing,
symbolic and numerical code review, manuscript organization, and language
editing.  The author reviewed all AI-assisted outputs and independently
checked the sources, proofs, computations, and code used in the article.
The author assumes responsibility for all content.

\paragraph{Data and code availability.}
The interval-certification sources, recorded enclosures, and reproduction
instructions for the model-specific Gause inequalities and the two explicit
singular balanced-neutral parameter points
\ifsiadsreview
accompany this manuscript as supplementary material.
\else
are included as ancillary files with this preprint.
\fi
The positive-\(\eps\) displacement data and the scripts that generate
Figure~\ref{fig:bio-positive-displacements}
\ifsiadsreview
are included in the supplement as a separately labelled component
\else
are included in the ancillary files as a separately labelled component
\fi
and are identified as ordinary floating-point evidence.  The general cyclicity theorem
is analytic and does not depend on numerical orbit integration.

\begingroup
\ifsiadsreview
  \let\originalthebibliography\thebibliography
  \renewcommand{\thebibliography}[1]{%
    \originalthebibliography{#1}%
    \setlength{\itemsep}{0pt}%
    \setlength{\parskip}{0pt}%
  }
  \footnotesize
\else
  \small
\fi
\bibliographystyle{siam}
\bibliography{references}
\endgroup

\end{document}